\tikzstyle{field} = [rectangle, draw = black, align = center]
\tikzstyle{arrow} = [thick,->,>=stealth]
\newcommand{\R}{{\mathbb R}}
\newcommand{\N}{{\mathbb N}}
\newcommand{\Z}{{\mathbb Z}}
\newcommand{\C}{{\mathbb C}}
\newcommand{\curlN}{\mathcal{N}}
\newcommand{\curlO}{\mathcal{O}}
\newcommand{\curlV}{\mathcal{V}}
\newcommand{\curlU}{\mathcal{U}}
\newcommand{\curlP}{\mathcal{P}}
\newcommand{\curlX}{\mathcal{X}}
\newcommand{\curlY}{\mathcal{Y}}
\newcommand{\curlZ}{\mathcal{Z}}
\newcommand{\curlE}{\mathcal{E}}
\newcommand{\curlM}{\mathcal{M}}
\newcommand{\snorm}[2][]{\ensuremath{\left\vert#2\right\vert_{#1}}}
\newcommand{\norm}[2][]{\ensuremath{\left\|#2\right\|_{#1}}}
\newcommand{\scalarprod}[2]{\ensuremath{\left\langle#1,#2\right\rangle}}
\newcommand{\utw}{u_\text{tw}}
\newcommand{\vtw}{v_\text{tw}}
\newcommand{\cphase}{c_p}
\newcommand{\Ahet}{A_\text{het}}
\newcommand{\Bhet}{B_{1,\text{het}}}
\newtheorem{theorem}{Theorem}[section]
\newtheorem{lemma}[theorem]{Lemma}
\newtheorem{remark}[theorem]{Remark}
\def\@endtheorem{\qed\endtrivlist\@endpefalse } 
\newtheoremstyle{remarks}
{10pt} 
{10pt} 
{} 
{} 
{\bfseries} 
{.}
{ } 
{}
\theoremstyle{remarks}
\newtheorem*{rem*}{Remark}
\renewcommand{\Re}{\operatorname{Re}}
\renewcommand{\Im}{\operatorname{Im}}
\author{Bastian Hilder\footnote{Institut f\"ur Analysis, Dynamik und Modellierung, Universit\"at Stuttgart, Pfaffenwaldring 57, 70569 Stuttgart, Germany. E-mail adress: \href{mailto:bastian.hilder@mathematik.uni-stuttgart.de}{bastian.hilder@mathematik.uni-stuttgart.de}}}
\title{Modulating traveling fronts in a dispersive Swift-Hohenberg equation coupled to an additional conservation law}
\date{\today}
\begin{document}
\maketitle

\begin{abstract}
	We consider a one-dimensional Swift-Hohenberg equation coupled to a conservation law, where both equations contain additional dispersive terms breaking the reflection symmetry $x \mapsto -x$.
	This system exhibits a Turing instability and we study the dynamics close to the onset of this instability.
	First, we show that periodic traveling waves bifurcate from a homogeneous ground state.
	Second, fixing the bifurcation parameter close to the onset of instability, we construct modulating traveling fronts, which capture the process of pattern-formation by modeling the transition from the homogeneous ground state to the periodic traveling wave through an invading front.
	The existence proof is based on center manifold reduction to a finite-dimensional system.
	Here, the dimension of the center manifold depends on the relation between the spreading speed of the invading modulating front and the linear group velocities of the system.
	Due to the broken reflection symmetry, the coefficients in the reduced equation are genuinely complex.
	Therefore, the main challenge is the construction of persistent heteroclinic connections on the center manifold, which correspond to modulating traveling fronts in the full system.
	\newline
	\newline
	\textbf{Keywords.} Modulating fronts; Center manifold reduction; Pattern formation
	\newline
	\textbf{Mathematics Subject Classification (2020).} 34C37; 35C07; 35B36; 35B52
\end{abstract}

\section{Introduction}

Pattern-forming systems admitting a conservation law structure are omnipresent in physical problems, see \cite{crossHohenberg93,matthewsCox00}.
In particular, they appear generically in hydrodynamical stability problems with a free boundary such as the B\'enard-Marangoni problem \cite{zimmermann14} or the flow down an inclined surface \cite{haeckerSchneiderZimmermann11}.
Here, the mean fluid hight formally acts as a conserved quantity.
Typically, these systems possess a homogeneous background state, which is destabilized when a bifurcation parameter increases beyond a critical value and a spatially periodic traveling wave, also called a wave train or pattern, bifurcates.
For fixed bifurcation parameter beyond the critical value, these periodic patterns often arise in the wake of an invading heteroclinic front, which connects the unstable ground state to the spatially periodic traveling wave, see e.g.\ \cite{deeLanger83,benJacobBrandDeeKramerLanger85,vanSaarloos03}.

We model this invasion process by constructing so-called modulating traveling fronts, see \cite{colletEckmann86,eckmannWayne91,haragusSchneider99,fayeHolzer15,hilder20} for examples.
Modulating traveling fronts are solutions of the form $u(t,x) = \curlU(x-ct,x-\cphase t)$, where $\curlU(\xi,p)$ is periodic with respect to its second argument and satisfies the asymptotic boundary conditions
\begin{align*}
	\lim_{\xi \rightarrow -\infty} \curlU(\xi,p) = \utw(p) \text{ and } \lim_{\xi \rightarrow +\infty} \curlU(\xi,p) = u_\text{ground}.
\end{align*}
Here, $\utw$ denotes the bifurcating periodic traveling wave, $u_\text{ground}$ denotes the ground state, $c$ is the spreading speed of the modulating front and $\cphase$ is the phase velocity of $\utw$, see Figure \ref{fig:modfront}.
We point out that solutions of this type, i.e.~solutions modeling the invasion of a pattern into an unstable ground state, are also referred to as pattern-forming fronts, see e.g.~\cite{ebertSpruijtVanSaarloos04,gohDeRijk20arXiv}.

\begin{figure}
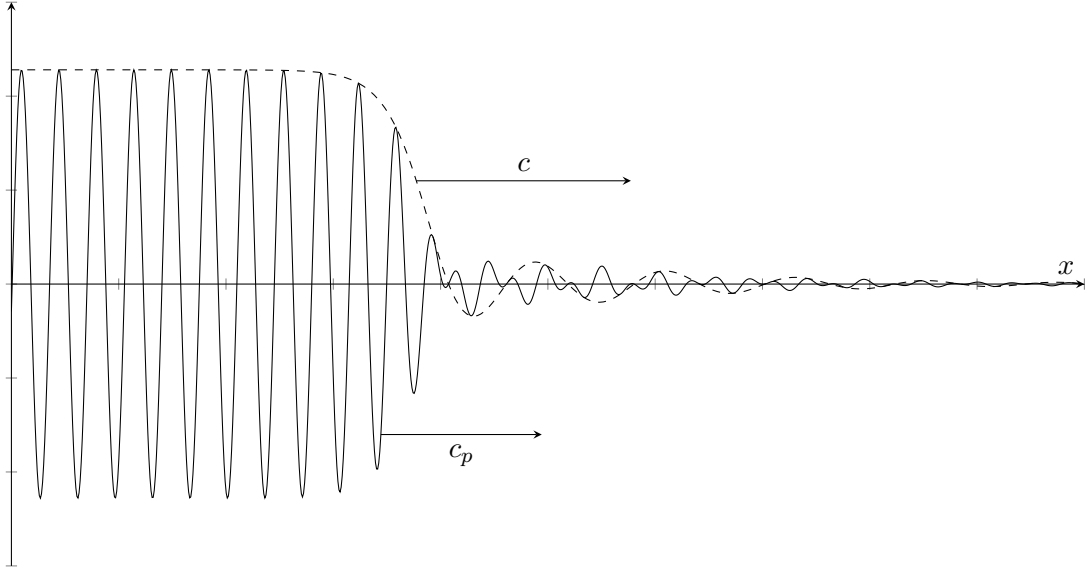

	\centering
	\include{Images/SchematicPic/modfrontDispersionSchematic}
	\caption{Schematic depiction of a modulating traveling front with spreading speed $c$, which connects to a periodic traveling wave with phase velocity $\cphase$. Note that, in general, the amplitude (dashed line) does not decay monotonically to zero, see Remark \ref{rem:nonMonotonicDecay}.}
	\label{fig:modfront}
\end{figure}

In \cite{hilder20}, modulating traveling fronts have been constructed for a Swift-Hohenberg equation, which is coupled to a conservation law, that is
\begin{subequations}
	\begin{align}
		\partial_t u &= -(1+\partial_x^2)^2 u + \varepsilon^2 \alpha_0 u + uv - u^3, \label{eq:SH}\\
		\partial_t v &= \partial_x^2 v + \gamma\partial_x^2(u^2), \label{eq:con}
	\end{align}
\end{subequations}
with $u(t,x), v(t,x) \in \R$, $x \in \R$, $t \geq 0$ and parameters $\gamma \in \R$, $\alpha_0 > 0$ and $0 < \varepsilon \ll 1$.
In this system, $\alpha_0$ is the bifurcation parameter and $\alpha_c = 0$ is the critical value at which the ground state $(u_\text{ground},v_\text{ground}) = (0,0)$ destabilizes.
Since $\varepsilon$ is small and $\alpha_0 > \alpha_c = 0$, the system is close to the onset of instability.

One particular feature of the system \eqref{eq:SH}--\eqref{eq:con} is its reflection symmetry, that is, it is invariant with respect to $x \mapsto -x$.
Notably, system \eqref{eq:SH}--\eqref{eq:con} shares this property for example with the B\'enard-Marangoni problem, which serves as motivation for the analysis performed in \cite{hilder20}.
It turns out that in general, breaking the reflection symmetry of a system has a profound impact on its dynamics and changes the structures that can be observed, see e.g.\ \cite{burkeHoughtonKnobloch09}.
In this paper, we study the effect of removing the reflection symmetry on the presence of modulating traveling fronts.
Therefore, we break the reflection symmetry of the system \eqref{eq:SH}--\eqref{eq:con} by including dispersive terms and study the following dispersive Swift-Hohenberg equation with an additional conservation law,
\begin{subequations}
	\label{eq:modGLDis}
	\begin{align}
		\partial_t u &= -(1+\partial_x^2)^2u + \varepsilon^2 \alpha_0 u + c_u \partial_x^3 u + uv + u\partial_x u - u^3, \label{eq:SHDis} \\
		\partial_t v &= \partial_x^2 v + c_v \partial_x v + \gamma_1 \partial_x^2(u^2) + \gamma_2\partial_x(u^2), \label{eq:ConDis}
	\end{align}
\end{subequations}
with $u(t,x), v(t,x) \in \R$, $x \in \R$, $t \geq 0$ and parameters $c_u,c_v,\gamma_1,\gamma_2 \in \R$ with $c_u \neq 0$, $\alpha_0 > 0$ and $0 < \varepsilon \ll 1$.
Again $\alpha_0$ is the bifurcation parameter and we study the system close to the onset of instability since $\alpha_0 > 0$ and $\varepsilon > 0$ is small.
System \eqref{eq:SHDis}--\eqref{eq:ConDis} is a prototypical model of a pattern-forming system admitting a conservation law, which has a broken reflection symmetry and it is motivated by real-world systems such as the flow down an inclined plane or a tilted B\'enard-Marangoni problem, that is a flow down a heated inclined plane.

It turns out that the system \eqref{eq:SHDis}--\eqref{eq:ConDis} exhibits a zoo of modulating traveling front solutions depending on the spreading speed of the front $c$, and the dispersion parameters $c_u$ and $c_v$.
In particular, we identify 5 different parameter scenarios, in which modulating fronts can be constructed, see Figure \ref{fig:Scenarios}.
These cover almost all possible choices for $c, c_u$ and $c_v$ and the only notable exclusion is the case that $c$ is close to $c_u$ with respect to $\varepsilon$, see also Remark \ref{rem:speedCloseToPhaseVel}.
For each scenario we derive a reduced system, which approximately governs the amplitude of the modulating front.
This reduced system and particularly its dimension is the main difference between the parameter regimes.
We refer to Section \ref{sec:IntroExistenceModFronts} for a detailed outline of the different scenarios and a discussion of the corresponding reduced systems.

To conclude the introduction of the model \eqref{eq:SHDis}--\eqref{eq:ConDis} we briefly discuss the choice of the additional dispersive terms.
On the linear level the symmetry is broken via $c_u \partial_x^3 u$ in \eqref{eq:SHDis} and $c_v \partial_x v$ in \eqref{eq:ConDis}.
In particular, the third derivative in the Swift-Hohenberg equation is a typical choice in the pure dispersive Swift-Hohenberg equation (i.e.~without an additional conservation law), see e.g.~\cite{burkeHoughtonKnobloch09,harizEtAl19}.
It turns out that the results of our analysis crucially rely the presence of this third derivative rather than a first derivative as in \eqref{eq:ConDis}, see Remark \ref{rem:speedCloseToPhaseVel}.
For similar reasons, we also require $c_u \neq 0$.
In contrast, $c_v = 0$ is an admissible choice and therefore, the analysis only requires dispersion in the Swift-Hohenberg part.
Furthermore, on the nonlinear level the reflection symmetry is broken via a Burgers nonlinearity $u \partial_x u$ in both \eqref{eq:SHDis} and \eqref{eq:ConDis}.
Such a term is frequently used to model nonlinear advection and in this paper they are additionally motivated mathematically, see Remark \ref{rem:nonlinDispersionMotivationS1}.

\subsection{Linear dispersion relation and traveling wave solutions}
We now examine the instability mechanism, which drives the formation of patterns in the system \eqref{eq:SHDis}--\eqref{eq:ConDis}, in more detail.
The linearization of \eqref{eq:SHDis}--\eqref{eq:ConDis} about the homogeneous ground state $(u,v) = (0,0)$ is solved by $u(t,x) = \exp(\lambda_u(k)t-ikx)$ and $v(t,x) = \exp(\lambda_v(k) t - ikx)$, where
\begin{subequations}
	\begin{align}
		\lambda_u(k) &= -(1-k^2)^2 + \varepsilon^2 \alpha_0 + c_u ik^3, \label{eq:lambdaU}\\
		\lambda_v(k) &= -k^2 - c_v ik. \label{eq:lambdaV}
	\end{align}
\end{subequations}
For $\alpha_0 > 0$, we find that $\Re(\lambda_u(k)) > 0$ for all Fourier wave number $k$ that are sufficiently close to the critical wave number $k_c = \pm 1$, see Figure \ref{fig:instabMechanism}.
Hence, the ground state is indeed unstable for $\alpha_0 > 0$.

\begin{figure}
	\centering
	\begin{tikzpicture}
		\begin{axis}[
				xmin = -2,
				xmax = 2,
				ymin = -2,
				ymax = 2,
				xtick = {-1,1},
				ytick = {0},
				xticklabels = {$-k_c$,$+k_c$},
				yticklabels = \empty,
				xlabel = $k$,
				x label style={anchor=above},
				axis x line = center,
				axis y line = center,
				legend pos = outer north east,
				legend style = {font=\small},
			]
			\addplot[mark=none,samples=100,domain=-2:2,color=red] {-(1-x^2)^2+0.5};
			\addlegendentry{$\Re(\lambda_u)$};
			\addplot[mark=none,samples=100,domain=-2:2,dashed,color=red] {+x^3};
			\addlegendentry{$\Im(\lambda_u)$};
			\addplot[mark=none,samples=100,domain=-2:2,color=blue] {-x^2};
			\addlegendentry{$\Re(\lambda_v)$};
			\addplot[mark=none,samples=100,domain=-2:2,color=blue,dashed] {-x};
			\addlegendentry{$\Im(\lambda_v)$};
		\end{axis}
	\end{tikzpicture}
	\caption{Plot of the real and imaginary parts of the spectral curves $\lambda_u$ and $\lambda_v$, see \eqref{eq:lambdaU} and \eqref{eq:lambdaV}.}
	\label{fig:instabMechanism}
\end{figure}
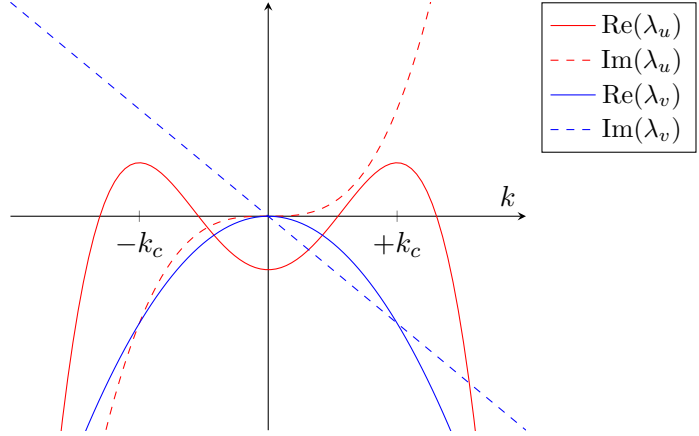

From this instability mechanism, we can predict the linear phase and group velocities, which play an important role in the construction of modulating traveling fronts.
The phase velocity describes the movement speed of a periodic traveling wave with a certain spatial frequency and the group velocity gives the speed of slowly varying modulations of the wave train or the speed of localized wave packages, see \cite{doelmanSandstedeScheelSchneider09,sandstedeScheelSchneiderUecker12} and in particular \cite[Figure 1.3]{sandstedeScheel04}.
Since the system \eqref{eq:SHDis}--\eqref{eq:ConDis} is not reflection symmetric (and thus, the spectral curves \eqref{eq:lambdaU} and \eqref{eq:lambdaV} have a non-trivial imaginary part) the bifurcating pattern is expected to have a non-zero phase velocity.
To obtain an intuition for the expected phase and group velocity we rewrite the solutions of the linearization about the ground state as
\begin{align*}
	u(t,x) &= \exp(\Re(\lambda_u)(k) t) \exp(i(\Im(\lambda_u(k)) t - kx)), \\
	v(t,x) &= \exp(\Re(\lambda_u)(k) t) \exp(i(\Im(\lambda_u(k)) t - kx)),
\end{align*}
where $\lambda_u, \lambda_v$ are given by \eqref{eq:lambdaU}--\eqref{eq:lambdaV}.
In particular, these solutions are $2\pi$-periodic in $\theta_u = \Im(\lambda_u(k)) t - kx$ and $\theta_v = \Re(\lambda_v(k)) t - kx$, respectively.
Therefore, we call 
\begin{align*}
	\omega_{\text{lin},u}(k) &:= \Im(\lambda_u(k)) = c_u k^3, \\
	\omega_{\text{lin},v}(k) &:= \Im(\lambda_v(k)) = -c_v k
\end{align*}
the linear dispersion relation of $u$ and $v$, respectively.
We then define the linear phase and group velocities corresponding to $u$ and $v$ by
\begin{align}
	c_\text{lin,phase,u}(k) &:= \dfrac{\omega_{\text{lin},u}(k)}{k} = c_u k^2, \nonumber\\
	c_\text{lin,group,u}(k) &:= \dfrac{d\omega_{\text{lin},u}(k)}{dk} = 3 c_u k^2, \label{eq:cGroupU}\\
	c_\text{lin,phase,v}(k) &= c_\text{lin,group,v}(k) = -c_v, \label{eq:cGroupV}
\end{align}
where the spatial wave number $k$ is close to the critical wave number $k_c = \pm 1$ for which the ground state is unstable, see Figure \ref{fig:instabMechanism}.
Here, it is worth pointing out that the group and phase velocity of the dispersive Swift-Hohenberg equation are different since $c_u \neq 0$ and the linear dispersive term in \eqref{eq:SHDis} is a third order derivative rather than a first order one.
It turns out that this is crucial to the analysis in this paper, see Remark \ref{rem:speedCloseToPhaseVel}.

Using these observations, we conjecture that when the homogeneous ground state destabilizes, a spatially periodic traveling wave bifurcates, which is of the form $(u,v)(t,x) = (\utw,\vtw)(\omega t - kx)$ and is $2\pi$-periodic.
Here, the spatial wave number $k$ is close to the critical wave number $k_c = 1$ and $\omega$ is the temporal frequency related to $k$ by the nonlinear dispersion relation $\omega_\text{nl}$, see e.g.~\cite{doelmanSandstedeScheelSchneider09,sandstedeScheelSchneiderUecker12}.
Since the instability is driven by the instability of the spectral curve $\lambda_u$ associated with the $u$-equation \eqref{eq:SHDis}, we expect that the phase velocity $\cphase(k) := \omega_\text{nl}(k)/k$ of the traveling wave is close to the linear phase velocity $c_\text{lin,phase,u}$ and thus satisfies
\begin{align*}
	\cphase(k) \approx c_\text{lin,phase,u}(k_c) = c_u,
\end{align*}
as long as $k$ is close to $k_c$.
This expectation can be made rigorous by using center manifold theory and we formally state the result, see Section \ref{sec:travelingWaves} and Lemma \ref{lem:travelingWaves} for details.

\begin{lemma}\label{lem:travelingWavesIntro}
	Let $\alpha_0 > 0$.
	Then, if $\varepsilon > 0$ is sufficiently small and $\gamma_1, \gamma_2 \in \R$ are close to zero, there exists a non-trivial traveling wave solution $(\utw,\vtw)$ of \eqref{eq:SHDis}--\eqref{eq:ConDis} with amplitude $\utw = \curlO(\varepsilon)$ and $\vtw = \curlO(\varepsilon^2)$.
	In particular, the traveling wave is stationary and periodic in the co-moving frame $p = x-\cphase t$ and its phase velocity is given by $\cphase = c_u + \curlO(\varepsilon^2)$.
\end{lemma}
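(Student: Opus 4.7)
The plan is to use spatial dynamics together with a centre manifold reduction, following the Haragus--Iooss framework. First I substitute the travelling wave ansatz $u(t,x)=U(p)$, $v(t,x)=V(p)$ with $p=x-\cphase t$ (and $\cphase$ close to $c_{u}$) into \eqref{eq:SHDis}--\eqref{eq:ConDis}, producing the ODE system
\begin{align*}
(1+\partial_{p}^{2})^{2}U - c_{u}U''' - \cphase U' - \varepsilon^{2}\alpha_{0}U - UV - UU' + U^{3} &= 0,\\
V'' + (\cphase+c_{v})V' + \gamma_{1}(U^{2})'' + \gamma_{2}(U^{2})' &= 0.
\end{align*}
Exploiting the divergence form of the $V$-equation, I integrate once in $p$. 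On the space of $2\pi$-periodic functions with prescribed zero mean, the resulting first order equation for $V$ can be inverted explicitly and yields a bounded nonlocal operator $V = \curlK_{\cphase}[U^{2}]$ of order $\curlO(|\gamma_{1}|+|\gamma_{2}|)$. Substituting back eliminates $V$ and leaves a scalar integro-differential equation for $U$.

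Second, I rewrite this equation as a first order spatial dynamics system in $\mathbf{U}=(U,U',U'',U''')$. At the critical parameters $\varepsilon=0$, $\cphase=c_{u}$, $\gamma_{1}=\gamma_{2}=0$, the characteristic polynomial factors as
\begin{equation*}
(1+\mu^{2})^{2} - c_{u}\mu - c_{u}\mu^{3} = (1+\mu^{2})\bigl[(1+\mu^{2})-c_{u}\mu\bigr]=0,
\end{equation*}
so because $c_{u}\neq 0$ the critical eigenvalues $\pm i$ are simple while the two remaining roots $(c_{u}\pm\sqrt{c_{u}^{2}-4})/2$ stay off the imaginary axis. A parameter-dependent centre manifold reduction then gives a two-dimensional (real) local centre manifold parametrized by a complex amplitude $A$ associated with the critical modes $e^{\pm ip}$, with the remaining directions slaved. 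The continuous translation invariance $p\mapsto p+\phi$ of the reduced ODE induces the $S^{1}$-action $A\mapsto Ae^{-i\phi}$, so the reduced vector field is $S^{1}$-equivariant.

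Third, stationary points of the reduced field correspond to $2\pi$-periodic travelling waves of \eqref{eq:SHDis}--\eqref{eq:ConDis}. By $S^{1}$-equivariance, the normal form up to higher order reads
\begin{equation*}
0 \;=\; \bigl(\lambda_{1}\varepsilon^{2}\alpha_{0} + \lambda_{2}(\cphase-c_{u})\bigr)A \;-\; g\,|A|^{2}A \;+\; \text{h.o.t.},
\end{equation*}
with explicit coefficients $\lambda_{1},\lambda_{2},g\in\C$. By continuity from the reflection-symmetric case $c_{u}=c_{v}=\gamma_{1}=\gamma_{2}=0$ treated in \cite{hilder20}, $\Re(g)$ is non-zero for $|\gamma_{1}|,|\gamma_{2}|$ small. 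The real part of this equation determines $|A|^{2}=\curlO(\varepsilon^{2})$, and the imaginary part then fixes the phase velocity as $\cphase = c_{u}+\curlO(\varepsilon^{2})$. Translating back one gets $\utw = \varepsilon(Ae^{ip}+\bar{A}e^{-ip}) + \curlO(\varepsilon^{2}) = \curlO(\varepsilon)$, and $\vtw = \curlK_{\cphase}[\utw^{2}] = \curlO(\varepsilon^{2})$.

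The main obstacle is the computation and non-degeneracy of the cubic coefficient $g$. Unlike the reflection-symmetric setting, $g$ is genuinely complex, and both its real part (ensuring the $\curlO(\varepsilon)$ amplitude) and its imaginary part (which contributes the leading correction to $\cphase$) must be tracked. A secondary technical point is to verify that the nonlocal operator $\curlK_{\cphase}$ coming from the integrated conservation law together with the Burgers-type term $UU'$ are compatible with the smoothness and spectral gap conditions required by the centre manifold theorem.
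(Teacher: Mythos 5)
Your route is genuinely different from the paper's. The paper performs a center manifold reduction of the \emph{time-evolution} PDE written in the co-moving frame, working throughout on spaces $\curlZ_B,\curlY_B,\curlX_B$ of $2\pi$-periodic functions with prescribed mean $v_0=\varepsilon^2 B$; the linear operator $L$ acts mode-by-mode in the Fourier index $n$, the central spectrum $\{\lambda_{u,\pm 1}\}$ is two-dimensional because fixing $v_0$ removes the zero eigenvalue of the conservation law, and the wave is an equilibrium of the reduced evolution equation. You instead perform spatial dynamics with $p$ as the evolution variable: you write the traveling-wave ODE as a first-order system in $(U,U',U'',U''')$, factor the characteristic polynomial as $(1+\mu^2)\bigl[(1+\mu^2)-c_u\mu\bigr]$, and reduce to the two-dimensional center eigenspace for $\mu=\pm i$. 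Both are standard techniques for small periodic solutions; each gives the same conclusion, though the paper's approach generalizes more cleanly to all $c_u,c_v$ and naturally yields the one-parameter family indexed by $B$.

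There is, however, a genuine gap in the intermediate elimination of $V$. You integrate the conservation law and invert the first-order equation on periodic zero-mean functions to get $V=\curlK_{\cphase}[U^2]$, and then feed this nonlocal expression into a \emph{spatial dynamics} reduction for $U$. These two steps are incompatible: in spatial dynamics $U$ is a priori just a function on $\R$ evolving in $p$, not an element of a periodic function space, so the nonlocal operator $\curlK_{\cphase}$ is not a well-defined map on the phase space $\R^4$ (nor on a local chart of it). The periodic-space inversion you invoke is precisely the kind of argument the paper uses, but it lives in the evolution-in-$t$ framework, not the evolution-in-$p$ one. The repair within your framework is to keep $V$ as a phase variable: integrating once gives the first-order ODE $V'=-(\cphase+c_v)V-2\gamma_1 UU'-\gamma_2 U^2+C$ on the invariant level set $C=0$, enlarging the phase space to $(U,U',U'',U''',V)\in\R^5$; the extra eigenvalue $-(\cphase+c_v)\approx-(c_u+c_v)$ is hyperbolic provided $c_u+c_v\neq 0$, so $V$ is slaved and the center manifold stays two-dimensional. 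Note this introduces a hypothesis $c_u+c_v\neq 0$ that the paper does not need. Finally, a smaller imprecision: in the spatial-dynamics picture it is not \emph{stationary points} of the reduced vector field that give $2\pi$-periodic waves — it is relative equilibria, i.e.\ zeros of the radial equation $r'=f(r)$ in the $S^1$-normal-form polar coordinates, with the frequency condition $\theta'=1$ fixing $\cphase$; your imaginary-part matching of $\cphase$ is exactly this condition, so the substance is right but the phrasing should be corrected.
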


\begin{remark}
	The statement of Lemma \ref{lem:travelingWavesIntro}, and also the remainder of this paper, is restricted to the spatial wave number $k = k_c = 1$. However, we expect that the results are also valid for $k$ close to $k_c$, since for every $\alpha_0$, an open interval of spatial wave numbers close to $k_c$ is destabilized, see Figure \ref{fig:instabMechanism}.
\end{remark}

\subsection{Existence of modulating traveling fronts: strategy, challenges and results}\label{sec:IntroExistenceModFronts}

We now discuss the strategy to establish the existence of modulating traveling fronts in the system \eqref{eq:SHDis}--\eqref{eq:ConDis} and arising challenges.
Recall that modulating traveling fronts are solutions $(u,v)(t,x) = (\curlU,\curlV)(\xi,p)$ with $\xi = x-ct$ and $p = x-\cphase t$, which satisfy
\begin{align}
	\lim_{\xi \rightarrow -\infty} (\curlU,\curlV)(\xi,p) = (\utw,\vtw)(p) \text{ and } \lim_{\xi \rightarrow +\infty} (\curlU,\curlV)(\xi,p) = (0,0).
	\label{eq:modfrontBC}
\end{align}
Here, $(\utw,\vtw)$ is the bifurcating traveling wave and $\cphase$ its phase velocity, which is given in Lemma \ref{lem:travelingWavesIntro}, and $c$ is the spreading speed of the modulating front.
Additionally, we assume that $c > 3c_u$ throughout the following discussion, i.e.~that the spreading speed of the front is faster than the group velocity associated to the periodic traveling wave, see also the following Remark \ref{rem:switchingPoint}.

\begin{remark}\label{rem:switchingPoint}
	Although we restrict the discussion to the existence of modulating traveling fronts, which spread faster than the group velocity of the periodic wave train (i.e.~$c > 3c_u$), it is possible to establish existence results for $c < 3c_u$.
	However, in this case we obtain ``reversed'' modulating traveling fronts, where the asymptotic states are interchanged, that is,
	\begin{align}
		\lim_{\xi \rightarrow -\infty} (\curlU,\curlV)(\xi,p) = (0,0) \text{ and } \lim_{\xi \rightarrow +\infty} (\curlU,\curlV)(\xi,p) = (\utw,\vtw)(p).
		\label{eq:revModfrontBC}
	\end{align}
	It is intuitively clear that the group velocity acts as a switching point since a modulating traveling front can be interpreted as a slowly varying modulation of the periodic traveling wave $(\utw,\vtw)$, which are expected to spread with the group velocity.
	We point out that this is similar to distinguishing the cases $c < 0$ and $c > 0$ in the reflection symmetric case.
\end{remark}

As in \cite{eckmannWayne91,haragusSchneider99,fayeHolzer15,hilder20}, the analysis is based on spatial dynamics and center manifold theory.
That is, we insert the modulating traveling front ansatz into \eqref{eq:SHDis}--\eqref{eq:ConDis} and rewrite the resulting system into a first order system with respect to the spatial variable $\xi = x-ct$, see \eqref{eq:spatDynFinal}.
In the non-dispersive case \eqref{eq:SH}--\eqref{eq:con} analyzed in \cite{hilder20} the main challenge is the construction of a sufficiently large center manifold since, at the bifurcation point, there are infinitely many eigenvalues on the imaginary axis.
It turns out that, unless the spreading speed of the modulating traveling front is close to the phase velocity of the bifurcating traveling wave (see Remark \ref{rem:speedCloseToPhaseVel}), the spectral situation is very different for the dispersive system \eqref{eq:SHDis}--\eqref{eq:ConDis}.
That is, at the bifurcation point only a finite number of central eigenvalues lie on the imaginary axis and there is an $\varepsilon$-independent spectral gap to the remaining  hyperbolic spectrum, see Lemmas \ref{lem:specLSH} and \ref{lem:specLcon}.
In particular, classical center manifold theory, see e.g.\ \cite{vanderbauwhedeIooss92,haragusIooss11}, is applicable in our setting to obtain a reduced system.

Thus, using center manifold theory, we reduce the infinite-dimensional spatial system to a finite-dimensional reduced system.
Then, the modulating traveling fronts in the full system correspond to heteroclinic orbits connecting the invading state to the origin in the reduced system.
It turns out that the construction of these heteroclinic connections is the main challenge in the dispersive case.
The difficulty is due to the presence of complex-valued coefficients in the reduced equation, which is caused by the dispersive terms.
In contrast the coefficients of the reduced equation in the reflection symmetric case are real-valued, which simplifies the analysis.

Analyzing the central spectrum (see Lemmas \ref{lem:specLSH} and \ref{lem:specLcon} for details), we find that the number of eigenvalues on the imaginary axis depends on the choice of the spreading speed $c$ of the modulating front in relation to the linear group velocities $c_\text{lin,group,u}(k_c) = 3c_u$ and $c_\text{lin,group,v}(k_c) = -c_v$, see \eqref{eq:cGroupU} and \eqref{eq:cGroupV}.
In particular, we distinguish the following scenarios, see also Figure \ref{fig:Scenarios}.

\begin{enumerate}[label=\textbf{Scenario \Roman*:},align=left,ref=\Roman*]
	\item $c\vert_{\varepsilon = 0} \neq -c_v$ and $c\vert_{\varepsilon = 0} \neq 3c_u$ \label{scenario1Intro}
	\item $c\vert_{\varepsilon = 0} \neq -c_v$ and $c = 3c_u + \varepsilon c_0$ with $c_0 \neq 0$ \label{scenario2Intro}
	\item $c = -c_v + \varepsilon^2 c_0$ with $c_0 \neq 0$, $c\vert_{\varepsilon = 0} \neq 3c_u$ and $\gamma_2 = 0$ \label{scenario3Intro}
	\item $c = -c_v + \varepsilon c_0$ with $c_0 \neq 0$, $c\vert_{\varepsilon = 0} \neq 3c_u$ and $\gamma_2 = \varepsilon\gamma_2^0$ \label{scenario4Intro}
	\item $c = -c_v + \varepsilon c_0$ with $c_0 \neq 0$, $c\vert_{\varepsilon = 0} = 3c_u$ and $\gamma_2 = 0$ \label{scenario5Intro}
\end{enumerate}

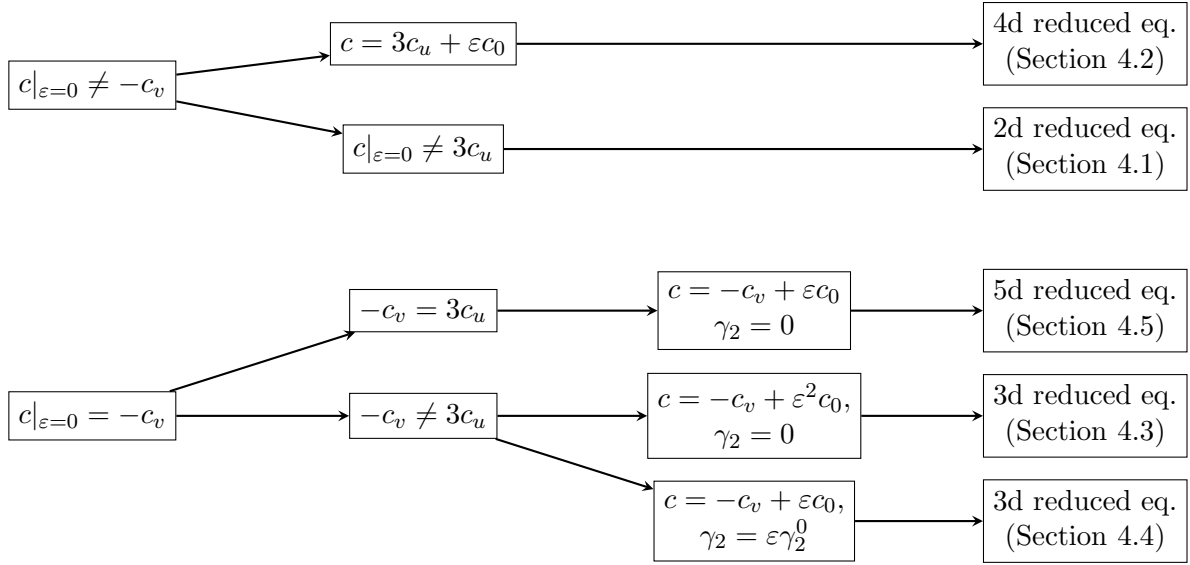
\begin{figure}
	\centering
	\begin{tikzpicture}[node distance = 1.4cm]
		\node (c2) [field] {$c \vert_{\varepsilon = 0} = - c_v$};
		\node (c2s2) [field, right of = c2, xshift = 3cm] {$-c_v \neq 3c_u$};
		\node (c2s2s1) [field, right of = c2s2, xshift = 3cm] {$c = -c_v + \varepsilon^2 c_0$, \\ $\gamma_2 = 0$};
		\node (c2s2s1Res) [field, right of = c2s2s1, xshift = 3cm] {3d reduced eq. \\ (Section \ref{sec:S3})};
		
		\node (c2s1) [field, above of = c2s2] {$-c_v = 3c_u$};
		\node (c2s1Param) [field, above of = c2s2s1] {$c = -c_v + \varepsilon c_0$ \\ $\gamma_2 = 0$};
		\node (c2s1Res) [field, above of = c2s2s1Res] {5d reduced eq. \\ (Section \ref{sec:S5})};
		
		\node (c2s2s2) [field, below of = c2s2s1] {$c = -c_v + \varepsilon c_0$, \\ $\gamma_2 = \varepsilon \gamma_2^0$};
		\node (c2s2s2Res) [field, below of = c2s2s1Res] {3d reduced eq. \\ (Section \ref{sec:S4})};
		
		\node (c1) [field, above of = c2, yshift = 3cm] {$c \vert_{\varepsilon = 0} \neq -c_v$};
		\node (c1s1) [field, above of = c2s1, yshift = 0.75cm] {$c \vert_{\varepsilon = 0} \neq 3c_u$};
		\node (c1s1Res) [field, above of = c2s1Res, yshift = 0.75cm] {2d reduced eq. \\ (Section \ref{sec:S1})};
		
		\node (c1s2) [field, above of = c1s1] {$c = 3c_u + \varepsilon c_0$};
		\node (c1s2Res) [field, above of = c1s1Res] {4d reduced eq. \\ (Section \ref{sec:S2})};

		\draw [arrow] (c2) -- (c2s2);
		\draw [arrow] (c2s2) -- (c2s2s1);
		\draw [arrow] (c2s2s1) -- (c2s2s1Res);
		
		\draw [arrow] (c2s2) -- (c2s2s2);
		\draw [arrow] (c2s2s2) -- (c2s2s2Res);
		
		\draw [arrow] (c2) -- (c2s1);
		\draw [arrow] (c2s1) -- (c2s1Param);
		\draw [arrow] (c2s1Param) -- (c2s1Res);
		
		\draw [arrow] (c1) -- (c1s1);
		\draw [arrow] (c1s1) -- (c1s1Res);
		\draw [arrow] (c1) -- (c1s2);
		\draw [arrow] (c1s2) -- (c1s2Res);
	\end{tikzpicture}
	\caption{An overview of the different scenarios. Starting from the cases that either $c\vert_{\varepsilon = 0}$ equals $-c_v$ or not, we depict how different parameter choices affect the dimension of the reduced equation on the center manifold. Here, we count dimensions after splitting complex variables into their real and imaginary parts. In particular, this means that in all cases with even dimension the conserved center mode is governed by the non-conserved ones on the center manifold and does not contribute an additional equation.}
	\label{fig:Scenarios}
\end{figure}

The scenarios can be split into three subgroups.
The first group contains scenarios \ref{scenario1Intro} and \ref{scenario2Intro}.
In both scenarios the spreading speed $c$ of the modulating traveling front is away from $-c_v$, the linear group velocity corresponding to the conservation law.
It turns out that the center mode of the conservation law is then determined by the center modes of the dispersive Swift-Hohenberg equation, since the only central eigenvalue of the conservation law is a zero eigenvalue, which can be removed by integration.
Therefore, only the central modes of the dispersive Swift-Hohenberg equation are relevant to understand the dynamical behavior on the center manifold.

Scenario \ref{scenario1Intro} (see Section \ref{sec:S1}) then additionally assumes that the spreading speed $c$ is away from $3c_u$, the linear group velocity corresponding to the dispersive Swift-Hohenberg equation.
In this setting, the reduced equation on the center manifold is a one-dimensional, complex-valued equation, which has a gauge invariance reflecting the translational invariance of \eqref{eq:SHDis}--\eqref{eq:ConDis}.
By exploiting the gauge invariance and using polar coordinates it is then sufficient to study a one-dimensional, \emph{real}-valued equation and the existence of heteroclinic orbits is established rigorously.

In contrast, Scenario \ref{scenario2Intro} (see Section \ref{sec:S2}) assumes that the spreading speed $c$ is $\varepsilon$-close to $3c_u$, i.e.~that $c = 3c_u + \varepsilon c_0$.
For this choice, the spatial system reduces to a two-dimensional, complex-valued system on the center manifold.
Neglecting higher order terms, we analyze the system using numerical methods and obtain numerical evidence for the existence of heteroclinic orbits if $c_0$ is larger than some $c_0^\ast > 0$.
Assuming the existence of these heteroclinic connections, we show rigorously that they persist if the higher order terms are included, which establishes the existence of modulating fronts.
Furthermore, we study the behavior if $c_0$ decreases below $c_0^\ast$ using AUTO \cite{auto07p} and in particular find that the system undergoes a Hopf bifurcation at $c_0^\ast$.
Therefore, the amplitude of the modulating front does not decay to zero, but converges to a periodic state, see Figure \ref{fig:hetConnectionsGroupVelocity}.

The second subgroup within the different scenarios contains Scenarios \ref{scenario3Intro} and \ref{scenario4Intro}.
Here, the spreading speed of the modulating traveling front is assumed to be close to the linear group velocity of the conservation law $-c_v$ with respect to $\varepsilon$.
In contrast to the first subgroup encompassing Scenarios \ref{scenario1Intro} and \ref{scenario2Intro}, the central mode of the conservation law is no longer governed by the central modes of the dispersive Swift-Hohenberg equation.
This is due to an additional central eigenvalue of the conservation law, which cannot be removed by integration.

Scenario \ref{scenario3Intro} deals with the case that $c$ converges quadratically to $-c_v$ as $\varepsilon$ tends to zero, i.e.~$c = -c_v + \varepsilon^2 c_0$ for some $c_0 \neq 0$.
In addition, the coupling parameter $\gamma_2$ is set to zero since $\partial_x(u^2)$ in \eqref{eq:ConDis} leads to coefficients in the reduced system which blow-up at least like $\varepsilon^{-2}$ for $\varepsilon \rightarrow 0$ in the natural scaling.
Then, the spatial system reduces to a system of a one-dimensional, complex-valued and a one-dimensional real-valued equation.
If the remaining coupling term $\gamma_1$, see \eqref{eq:ConDis}, is removed, the system degenerates to the reduced equation treated in Scenario \ref{scenario1Intro}, which exhibits heteroclinic connections.
We then show that these connections persist if the coupling parameter $\gamma_1$ is close to zero.

In Scenario \ref{scenario4Intro}, we then assume that $c$ converges linearly rather than quadratically to $-c_v$ as $\varepsilon$ tends to zero, that is $c = -c_v + \varepsilon c_0$ for some $c_0 \neq 0$.
This setting also allows to set $\gamma_2 = \curlO(\varepsilon)$.
In this case the central eigenvalues are of different order in $\varepsilon$ and therefore, it is not possible to choose a re-scaling such that the linear part of the reduced system on the center manifold is independent of $\varepsilon$, see Remarks \ref{rem:spatialRescaling} and \ref{rem:spatialRescalingS3}.
The reduced system then takes the form of a fast-slow system, that is, it is of the form
\begin{align*}
	\dot{f} &= F(f,g,\varepsilon), \\
	\varepsilon\dot{g} &= G(f,g,\varepsilon),
\end{align*}
where $\dot{(\cdot)}$ denotes a ``time'' derivative and $F,G$ are suitable smooth functions.
We then use geometric singular perturbation theory (see e.g.~\cite{kuehn15}) to construct persistent heteroclinic connections on the center manifold.

Finally, the remaining Scenario \ref{scenario5Intro} deals with the case where the spreading speed $c$ is both close to $3c_u$ and $-c_v$ and in particular assumes that $3c_u = -c_v$.
This leads to a reduced system on the center manifold, which couples a two-dimensional, complex-valued system for the central modes of the dispersive Swift-Hohenberg equation to a one-dimensional, real-valued equation for the central mode of the conservation law.
Here, again we have to set $\gamma_2 = 0$ to avoid the presence of coefficients, which exhibit a blow-up as $\varepsilon \rightarrow 0$.
Similar to Scenario \ref{scenario3Intro}, we decouple the system by setting $\gamma_1 = 0$.
Then, the system on the center manifold reduces to the two-dimensional system discussed in Scenario \ref{scenario2Intro}.
Therefore, we have numerical evidence for the existence of the desired heteroclinic connections and can show their persistence if the coupling parameter $\gamma_1$ is sufficiently close to zero.

To conclude, in Scenarios \ref{scenario1Intro}, \ref{scenario3Intro} and \ref{scenario4Intro}, we rigorously show the existence of modulating traveling fronts in \eqref{eq:SHDis}--\eqref{eq:ConDis}, which we summarize formally in the following result (see Theorems \ref{thm:S1}, \ref{thm:S3} and \ref{thm:S4} for the precise formulation).

\begin{theorem}\label{thm:main1}
	Let the dispersion coefficient $c_u$ be non-zero and assume that $c\vert_{\varepsilon = 0} \neq c_u$.
	Then, for parameters as in Scenario \ref{scenario1Intro}, \ref{scenario3Intro} or \ref{scenario4Intro}, $\varepsilon > 0$ sufficiently small and $\gamma_1, \gamma_2$ close to zero, the system \eqref{eq:SHDis}--\eqref{eq:ConDis} has a family of modulating traveling front solutions.
\end{theorem}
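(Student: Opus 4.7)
The strategy is the spatial-dynamics and center-manifold approach outlined in Section \ref{sec:IntroExistenceModFronts}. First, I would substitute the ansatz $(u,v)(t,x) = (\curlU,\curlV)(\xi,p)$ with $\xi = x-ct$ and $p = x-\cphase t$ into \eqref{eq:modGLDis}, interpret $p$ as a periodic angular variable, and rewrite the resulting PDE as a first-order spatial dynamical system in $\xi$ on a suitable phase space of $2\pi$-periodic functions of $p$. The homogeneous ground state sits at the origin and the periodic wave train from Lemma \ref{lem:travelingWavesIntro} appears as a $p$-dependent equilibrium, so a modulating front satisfying \eqref{eq:modfrontBC} is exactly a heteroclinic orbit joining them. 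Expanding in Fourier series in $p$ and invoking Lemmas \ref{lem:specLSH} and \ref{lem:specLcon}, I would show that only finitely many modes contribute to the central spectrum and that the hyperbolic part is separated from the imaginary axis by an $\varepsilon$-independent spectral gap. The classical Vanderbauwhede--Iooss center manifold theorem then yields a smooth finite-dimensional invariant manifold of real dimension $2$ in Scenario \ref{scenario1Intro} and $3$ in Scenarios \ref{scenario3Intro}, \ref{scenario4Intro}, together with a smooth reduced vector field.

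The remaining and main task is to construct persistent heteroclinic connections in the reduced equations, whose coefficients are genuinely complex because of the broken reflection symmetry. For Scenario \ref{scenario1Intro} the reduced system is a single complex ODE that inherits a $U(1)$ gauge invariance $A \mapsto e^{i\psi} A$ from the $x$-translation symmetry of \eqref{eq:modGLDis}. Polar coordinates $A = r e^{i\varphi}$ decouple the phase, and the resulting real scalar equation for $r$ reduces, up to higher-order terms, to a logistic-type ODE on $[0,\infty)$ with a monotone connection from the nontrivial fixed point (corresponding to $\utw$) to zero; a standard phase-plane / contraction argument shows that this connection persists once the higher-order corrections from the center manifold are included. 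For Scenario \ref{scenario3Intro} the reduced system couples the Scenario I amplitude equation to one additional real equation carrying the extra conserved central mode, with coupling controlled by $\gamma_1$. I would first treat $\gamma_1 = 0$, where the system decouples and the Scenario \ref{scenario1Intro} heteroclinic solves the problem, and then perturb in $\gamma_1$: since the linearisations at both endpoints remain hyperbolic transverse to the orbit in the extended system, the unstable manifold of the wave train and the stable manifold of the origin intersect transversally, and this intersection persists for $|\gamma_1|$ sufficiently small.

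The main obstacle is Scenario \ref{scenario4Intro}, where the central eigenvalues live on different $\varepsilon$-scales and no single spatial rescaling renders the linear part of the reduced vector field $\varepsilon$-independent. Instead, the reduced system takes the singularly perturbed form
\begin{equation*}
    \dot f = F(f,g,\varepsilon), \qquad \varepsilon \dot g = G(f,g,\varepsilon),
\end{equation*}
with $f$ carrying the dispersive Swift--Hohenberg amplitudes and $g$ the slow conservation-law mode. My plan is to apply geometric singular perturbation theory. First, I would identify the critical manifold $\curlM_0 = \{G(f,g,0) = 0\}$, solve it as a graph $g = h(f)$, and verify normal hyperbolicity using the structure of $\partial_g G$ inherited from the conservation-law linearisation. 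Substituting back produces a slow flow on $\curlM_0$ which, after using the gauge invariance, again reduces to the radial Scenario \ref{scenario1Intro} equation, so the singular heteroclinic is a pure slow segment on $\curlM_0$ joining the perturbed wave-train equilibrium to the origin (both of which lie on $\curlM_0$, eliminating the need for fast jumps). Fenichel's theorem then produces a locally invariant manifold $\curlM_\varepsilon$ which is an $\curlO(\varepsilon)$-perturbation of $\curlM_0$ together with perturbed stable and unstable foliations, and a standard transversality argument promotes the singular connection to a genuine heteroclinic for $0 < \varepsilon \ll 1$. Lifting these reduced heteroclinics through the center manifold finally yields the modulating traveling fronts of \eqref{eq:modGLDis} satisfying \eqref{eq:modfrontBC} claimed in Theorem \ref{thm:main1}.
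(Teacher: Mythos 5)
Your proposal follows essentially the same roadmap as the paper: spatial dynamics with Fourier expansion in the periodic variable $p$, the spectral lemmas to isolate a finite set of central modes with an $\varepsilon$-independent gap, Vanderbauwhede--Iooss center-manifold reduction, and then scenario-by-scenario heteroclinic construction — polar/gauge reduction to a scalar radial equation in Scenario \ref{scenario1Intro}, decoupling at $\gamma_1 = 0$ plus transversal intersection of stable/unstable manifolds in Scenario \ref{scenario3Intro}, and geometric singular perturbation theory (critical manifold, Fenichel persistence, pure slow heteroclinic since both equilibria lie on the critical manifold) in Scenario \ref{scenario4Intro}. The dimension count (2 for \ref{scenario1Intro}, 3 for \ref{scenario3Intro}, \ref{scenario4Intro}) and the role of the conservation-law structure in slaving or not slaving $B$ to $A$ are also correctly identified.

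One subtlety in Scenario \ref{scenario4Intro} deserves attention. The slow flow on the critical manifold $C_0 = \{B_1 = -(2\gamma_2^0/c_0)\,r_A^2\}$ is not literally the Scenario \ref{scenario1Intro} radial equation: substituting the graph into the $r_A$-equation modifies the cubic coefficient by $-2\gamma_2^0/c_0$. To obtain the heteroclinic with the correct orientation (the invading state unstable, the origin stable on $C_0$) one needs the modified cubic coefficient to retain a definite sign, which imposes a one-sided lower bound on $\gamma_2^0$ — in the paper this is precisely condition \eqref{eq:conditionGamma2S4}. This is not merely a requirement that $\gamma_2$ be "close to zero," so your phrase "again reduces to the radial Scenario \ref{scenario1Intro} equation" glosses over a genuine hypothesis that must be carried through. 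Relatedly, the persistence of the nontrivial invading equilibrium under higher-order and $\gamma$-perturbations relies on the translation invariance of \eqref{eq:modGLDis} and an implicit function theorem argument (as in the proof of Lemma \ref{lem:travelingWaves}) to pin down the corrected frequency $\omega_0^\ast$ together with the amplitude; your "phase-plane / contraction argument" should be understood as shorthand for this.
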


Additionally, in the Scenarios \ref{scenario2Intro} and \ref{scenario5Intro}, we have numerical evidence for the existence of heteroclinic connections in the reduced equation after neglecting higher order terms in $\varepsilon$.
Furthermore, assuming the existence of these heteroclinics, we can rigorously show their persistence with respect to small perturbations in $\varepsilon$ and thus, via the center manifold theory, obtain the following result, which we again state in a formal way and refer to Theorems \ref{thm:S2} and \ref{thm:S5} for the full details.

\begin{theorem}\label{thm:main2}
	Let the dispersion coefficient $c_u$ be non-zero and assume $c\vert_{\varepsilon = 0} \neq c_u$.
	Furthermore, assume that Scenario \ref{scenario2Intro} or \ref{scenario5Intro} holds and that the reduced system up to higher order terms has a suitable heteroclinic orbit.
	Then, for $\varepsilon > 0$ sufficiently small and $\gamma_1, \gamma_2 \in \R$ close to zero, the system \eqref{eq:SHDis}--\eqref{eq:ConDis} has a family of modulating traveling front solutions.
\end{theorem}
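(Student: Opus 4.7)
The plan is to combine the spatial dynamics and center manifold framework outlined in the introduction with a quantitative persistence argument for heteroclinic orbits. I would first insert the modulating front ansatz $(u,v)(t,x) = (\curlU,\curlV)(\xi,p)$ with $\xi = x-ct$, $p = x-\cphase t$, require $2\pi$-periodicity in $p$, and recast \eqref{eq:SHDis}--\eqref{eq:ConDis} as a first-order spatial dynamical system in $\xi$ on a suitable function space (Fourier series in $p$ with appropriate Sobolev regularity, augmented by the required $\xi$-derivatives). In both Scenario \ref{scenario2Intro} and Scenario \ref{scenario5Intro}, an analysis of the linearized operator analogous to Lemmas \ref{lem:specLSH} and \ref{lem:specLcon} yields a finite number of central eigenvalues on the imaginary axis at $\varepsilon = 0$ together with an $\varepsilon$-independent spectral gap to the hyperbolic part. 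Classical center manifold theorems of Vanderbauwhede--Iooss type then furnish a smooth finite-dimensional invariant manifold, of dimension $4$ in Scenario \ref{scenario2Intro} and $5$ in Scenario \ref{scenario5Intro}, on which every bounded solution of the spatial system must lie. Any modulating traveling front with the boundary conditions \eqref{eq:modfrontBC} corresponds to a heteroclinic orbit on this manifold, from the equilibrium associated with the bifurcating wave train $(\utw,\vtw)$ of Lemma \ref{lem:travelingWavesIntro} to the equilibrium corresponding to the ground state $(0,0)$.

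Next I would expand the reduced vector field in powers of $\varepsilon$ (and, in Scenario \ref{scenario5Intro}, of $\gamma_1$), after performing the spatial and amplitude rescalings discussed in the introduction. In Scenario \ref{scenario2Intro} the leading-order equation is precisely the two-dimensional complex (four-dimensional real) system for which the hypothesis of the theorem provides a heteroclinic orbit $\Gamma_0$ connecting the invading periodic-wave equilibrium (or, modulo the $S^1$ gauge action inherited from translation invariance, the corresponding circle of equilibria) to the origin. In Scenario \ref{scenario5Intro}, setting $\gamma_1 = 0$ decouples the reduced system into exactly the two-dimensional complex block from Scenario \ref{scenario2Intro} plus a passively driven one-dimensional real equation for the conservation-law mode; the assumed heteroclinic of Scenario \ref{scenario2Intro} lifts uniquely to a heteroclinic $\Gamma_0$ of the decoupled five-dimensional system. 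The ``suitable'' hypothesis is interpreted as transverse intersection, inside a fixed level set of the first integral induced by the gauge action, of the unstable manifold of the invading equilibrium and the stable manifold of the origin along $\Gamma_0$.

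The persistence argument then proceeds by formulating the heteroclinic problem as a nonlinear equation $F(\Gamma,\varepsilon,\gamma_1,\gamma_2) = 0$ on a Banach space of $\xi$-profiles with prescribed exponential decay to the two asymptotic equilibria, and by showing that the linearization $D_\Gamma F$ at $(\Gamma_0,0,0,0)$ is Fredholm of index zero with one-dimensional kernel spanned by the phase shift along $\Gamma_0$. The transversality hypothesis guarantees that, after fixing a phase normalization, this linearization is invertible. An application of the implicit function theorem then produces a smooth family of heteroclinics $\Gamma(\varepsilon,\gamma_1,\gamma_2)$ for all small parameters, yielding through the gauge orbit a one-parameter family of solutions. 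Pulling back through the center manifold graph and undoing the spatial dynamics transformation returns the sought modulating traveling fronts of \eqref{eq:SHDis}--\eqref{eq:ConDis}.

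The main obstacle will be verifying the Fredholm and index properties of $D_\Gamma F$, especially in Scenario \ref{scenario5Intro}. There the conservation-law mode has a genuine zero eigenvalue at the origin on the center manifold, which produces a second neutral direction in addition to the gauge-invariance phase, so one must check carefully that this direction is already absorbed by the asymptotic boundary conditions (equivalently, that the coupling via $\gamma_1$ through $\gamma_1\partial_x^2(u^2)$ does not create an obstruction in the Melnikov-type solvability condition). A related subtlety in Scenario \ref{scenario2Intro} is that the ``invading'' equilibrium is itself created by the bifurcation and has a weak stable direction of size $\curlO(\varepsilon)$, so the weighted function spaces and exponential decay rates must be chosen $\varepsilon$-uniformly; this is handled by using projectors onto the invariant subspaces of the asymptotic linearizations together with the uniform spectral gap furnished by the dispersive terms $c_u \partial_x^3 u$ and $c_v \partial_x v$.
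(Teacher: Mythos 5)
Your overall framework—spatial dynamics, center manifold reduction, and persistence of a heteroclinic—matches the paper. However, your persistence argument is substantially heavier than what the paper actually uses, and you import a few concerns that do not arise.

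The paper's persistence step is a one-line transversality argument. Under Assumption (A1), the origin is a \emph{sink} on the center manifold (a stable hyperbolic fixed point), so its stable manifold is the full $\mathbb{R}^4$ (Scenario II) or $\mathbb{R}^5$ (Scenario V) locally. The one-dimensional unstable manifold of the invading state therefore intersects this stable manifold \emph{automatically transversally}; no Melnikov condition, no Fredholm theory, and no weighted Banach space of profiles is needed. The heteroclinic then persists because transversal intersections of finite-dimensional manifolds persist under $C^1$-small perturbations of the vector field, and the invading fixed point itself persists (for a suitable $\omega_0^\ast$) by the translational invariance argument from Lemma \ref{lem:travelingWaves}. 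Your interpretation of ``suitable heteroclinic'' as an additional transversality hypothesis is therefore not what the paper requires: Assumptions (A1)–(A2) are only a spectral condition plus an existence statement, and transversality comes for free.

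Two of your technical worries do not actually arise in the paper's setup. First, there is no ``first integral induced by the gauge action'': the $S^1$ symmetry $A \mapsto e^{i\phi}A$ produces a zero eigenvalue at the invading equilibrium and a circle of fixed points, but no conserved quantity; the paper handles this zero eigenvalue by proving persistence of the fixed point circle and fixing a representative, not by restricting to level sets. Second, in Scenario V the conservation-law mode does \emph{not} contribute a neutral direction at the origin of the reduced system: the zero eigenvalue $\lambda_{c,0}^{\text{con}}=0$ of $L_0^{\text{con}}$ is eliminated by rewriting the $B$-equation as a second-order equation and integrating once (as in \eqref{eq:redEqS3B_intermediate}), so the reduced equation \eqref{eq:redEqS5_3} gives $B_1$ eigenvalue $-c_0 \neq 0$ at the origin. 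The origin remains hyperbolic, and the Fredholm/cokernel obstruction you anticipate is absent. Your functional-analytic scheme could in principle be made to work, but it would have to confront the genuinely subtle issue of the zero eigenvalue at the \emph{invading} state (the kernel of the linearized profile operator is two-dimensional in spaces that allow the asymptotic state at $-\infty$ to drift along the gauge circle), whereas the paper's geometric argument circumvents this entirely.
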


\begin{remark}\label{rem:nonMonotonicDecay}
	It turns out that the amplitude of the modulating traveling front is in general not monotonically decreasing to zero.
	This is generally the case in the reflection symmetric case \cite{hilder20}.
	However, due to the presence of complex-valued coefficients in the reduced equation in the dispersive case, the eigenvalues of the linearization about zero are generically complex-valued.
	Therefore, the heteroclinic orbit, which describes the amplitude of the modulating front, exhibits an oscillatory decay to zero.
	This is depicted in Figure \ref{fig:modfront}, see also Figure \ref{fig:hetConnectionsGroupVelocity}.
\end{remark}

\begin{remark}\label{rem:speedCloseToPhaseVel}
	We highlight that in both Theorems \ref{thm:main1} and \ref{thm:main2} we assume that $c\vert_{\varepsilon = 0} \neq c_u$.
	This guarantees that there are only finitely many eigenvalues on the imaginary axis at $\varepsilon = 0$ and that there is an $\varepsilon$-independent spectral gap about the imaginary axis.
	This is pivital for the application of the center manifold theorem.
	In fact, if we set $c\vert_{\varepsilon = 0} = c_u$ there are infinitely many eigenvalues on the imaginary axis for $\varepsilon = 0$, see the proofs of Lemmas \ref{lem:specLSH} and \ref{lem:specLcon}.
	
	This also shows that it is crucial for the analysis that the reflection symmetry in the Swift-Hohenberg equation \eqref{eq:SHDis} is broken via a third order derivative rather than a first order one.
	If the symmetry is broken via a $c_u \partial_x$-term, the corresponding linear phase and group velocity coincide.
	Therefore, in this case the results in Theorem \ref{thm:main2} cannot be established with the methods used here.
	
	It is worth pointing out that the setting $c\vert_{\varepsilon = 0} = c_u$ is comparable to \cite{hilder20}, which constructs modulating fronts for the reflection symmetric system \eqref{eq:SH}--\eqref{eq:con}, which have a spreading speed of order $\varepsilon$ and connect to spatially periodic waves with vanishing phase velocity (due to the reflection symmetry).
	The construction for \eqref{eq:SH}--\eqref{eq:con} then uses the observation that, although for $\varepsilon = 0$ there are infinitely many purely imaginary eigenvalues, these eigenvalues leave the imaginary axis with different speeds for $\varepsilon > 0$.
	It then turns out that there is a finite number of ``central'' eigenvalues with real part of order $\varepsilon$, while the remaining eigenvalues have a real part of order $\sqrt{\varepsilon}$, see \cite[Figure 4]{hilder20}.
	However, if a similar procedure is applicable in the dispersive system \eqref{eq:SHDis}--\eqref{eq:ConDis} for $c\vert_{\varepsilon = 0} = c_u$ is still an open question.
\end{remark}

\subsection{Outline}

The paper is organized as follows.
In Section \ref{sec:travelingWaves}, we establish the existence of bifurcating traveling wave solutions in the system \eqref{eq:SHDis}--\eqref{eq:ConDis} and show that their phase velocity is $\varepsilon^2$-close to $c_u$, the linear phase velocity of the dispersive Swift-Hohenberg equation \eqref{eq:SHDis}.
Following, Section \ref{sec:spatialDynamics} contains the derivation of the spatial dynamics formulation, a spectral analysis (see Lemmas \ref{lem:specLSH} and \ref{lem:specLcon}) and the center manifold result.
Afterwards, the derivation of the reduced equation in Scenarios \ref{scenario1Intro}--\ref{scenario5Intro} and the existence of persistent heteroclinic connections is done in Section \ref{sec:redEqAndHetConnections}.

\section{Spatially periodic traveling wave solutions}\label{sec:travelingWaves}

We first prove the existence of spatially periodic traveling wave solutions in the system \eqref{eq:SHDis}--\eqref{eq:ConDis}.
The proof is based on center manifold theory.
Since the system lacks reflection symmetry due to the dispersive terms, the traveling waves have a non-zero phase velocity.
As discussed in the introduction, the phase velocity is expected to be close to the linear phase velocity of the dispersive Swift-Hohenberg equation $c_\text{lin,phase,u}(k_c) = c_u$, where $k_c = 1$.
Therefore, we make the ansatz $\cphase = c_u + \varepsilon^2 \omega_0^\ast$ and use the correction $\omega_0^\ast$ to guarantee the existence of non-trivial, real fixed points on the center manifold.

\begin{lemma}\label{lem:travelingWaves}
	Let $B \in \R$ and $\alpha_0 \in \R$ such that $B + \alpha_0 > 0$.
	Then, there exist $\varepsilon_0 > 0$, $\gamma^\ast > 0$ such that for all $\varepsilon \in (0,\varepsilon_0)$ and $\gamma_1,\gamma_2 \in (-\gamma^\ast, \gamma^\ast)$ there exist $A^\ast \in \R$ and $\omega_0^\ast \in \R$ given by
	\begin{align*}
		A^\ast &= \pm \sqrt{\dfrac{(9+4c_u^2)(B+\alpha_0)}{4(7+3c_u^2)}} + \curlO(\varepsilon^2 (1 + \snorm{\gamma_1} + \snorm{\gamma_2})), \\
		\omega_0^\ast &= - \dfrac{c_u(B + \alpha_0)}{6(7+3c_u^2)} + \curlO(\varepsilon^2 (1 + \snorm{\gamma_1} + \snorm{\gamma_2})),
	\end{align*}
	such that \eqref{eq:SHDis}--\eqref{eq:ConDis} has a spatially periodic traveling wave solution of the form
	\begin{align*}
		\utw(p) &= \varepsilon A^\ast e^{ip} + c.c. + \curlO(\varepsilon^3 (1 + \snorm{\gamma_1} + \snorm{\gamma_2})), \\
		\vtw(p) &= \varepsilon^2 B + \varepsilon^2 \dfrac{-4\gamma_1 + 2i\gamma_2}{4 - 2i(c_u+c_v)}(A^\ast)^2 e^{2ip} + c.c. + \curlO(\varepsilon^3 (1 + \snorm{\gamma_1} + \snorm{\gamma_2})),
	\end{align*}
	with $p = x - \cphase t$ and phase velocity $\cphase = c_u + \varepsilon^2 \omega_0^\ast$.
	Here, $c.c.$ denotes the complex conjugated terms.
\end{lemma}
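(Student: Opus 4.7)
My plan is to substitute the traveling wave ansatz $(u,v)(t,x) = (\utw(p),\vtw(p))$ with $p = x - \cphase t$ and $\cphase = c_u + \varepsilon^2 \omega_0$ into \eqref{eq:SHDis}--\eqref{eq:ConDis}, obtaining an autonomous ODE system in $p$ whose $2\pi$-periodic solutions correspond to the desired bifurcating wave (with critical wave number $k_c = 1$). Rewriting this as a first-order spatial dynamical system in $(U,U',U'',U''',V,V')$, the linearization about the trivial state at $\varepsilon = 0$, $\cphase = c_u$ has characteristic polynomial $-(1+\nu^2)(1 + \nu^2 - c_u\nu)$ in the $U$-component, exhibiting double roots at $\nu = \pm i$ together with two off-axis roots, while the $V$-component yields $\nu \in \{0,-(c_u+c_v)\}$. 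The central spectrum therefore consists of $\pm i$ (with Jordan blocks) and the single zero eigenvalue from the conservation law, and the classical center manifold theorem applies.

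On the center manifold I would construct the wave perturbatively with the expansion
\begin{align*}
\utw(p) &= \varepsilon\bigl(A e^{ip} + \bar{A} e^{-ip}\bigr) + \varepsilon^2 u_2(p) + \varepsilon^3 u_3(p) + \cdots, \\
\vtw(p) &= \varepsilon^2 B + \varepsilon^2\bigl(V_2 e^{2ip} + \bar{V}_2 e^{-2ip}\bigr) + \cdots,
\end{align*}
where $A \in \C$ and $B \in \R$ parametrize the center manifold. Projecting onto the non-central Fourier modes at order $\varepsilon^2$, the linear operators act through the invertible multipliers $9 + 6ic_u$ (for $u$ at $e^{2ip}$) and $4 - 2i(c_u+c_v)$ (for $v$ at $e^{2ip}$). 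Crucially the Burgers term $u\partial_p u$ is quadratic in $u$ and so already forces at order $\varepsilon^2$, producing $u_2 = \tfrac{(2c_u+3i)A^2}{3(9+4c_u^2)} e^{2ip} + c.c.$; the $v$-equation is forced by $\gamma_1 \partial_p^2(u_1^2) + \gamma_2 \partial_p(u_1^2)$, giving $V_2 = \tfrac{-4\gamma_1+2i\gamma_2}{4-2i(c_u+c_v)} A^2$, which matches the claim.

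At the next order, projecting the $\utw$-equation onto the central mode $e^{ip}$ yields the amplitude equation by summing: (i) the linear correction $-\varepsilon^2(\alpha_0 + i\omega_0) A$; (ii) the $uv$-contribution $AB + \bar{A}V_2$; (iii) the Burgers feedback $i\bar{A}\cdot u_2\vert_{e^{2ip}}$; and (iv) the cubic contribution $-3\vert A\vert^2 A$. Using the gauge invariance $A \mapsto Ae^{i\phi}$ (from translation symmetry of the PDE) to choose $A > 0$ real, the equation decouples into two real scalar equations in $(A,\omega_0)$. At $\gamma_1 = \gamma_2 = 0$ the real part gives $\vert A^\ast\vert^2 = \tfrac{(9+4c_u^2)(B+\alpha_0)}{4(7+3c_u^2)}$ and the imaginary part gives $\omega_0^\ast = -\tfrac{c_u(B+\alpha_0)}{6(7+3c_u^2)}$. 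The Jacobian of this $2\times 2$ system is non-singular at the non-trivial root whenever $B + \alpha_0 > 0$, so the implicit function theorem extends the solution smoothly to small $\varepsilon > 0$ and small $\gamma_1,\gamma_2$, producing the claimed $\curlO(\varepsilon^2(1+\snorm{\gamma_1}+\snorm{\gamma_2}))$ errors after reconstructing the higher-order correctors $u_3, v_3,\dots$ via the center manifold.

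The main technical point is recognizing that the Burgers nonlinearity contributes at order $\varepsilon^2$ rather than $\varepsilon^3$, so the slaved mode $u_2 \sim e^{\pm 2ip}$ must be computed explicitly and then fed back into the $e^{ip}$-projection at order $\varepsilon^3$; this generates the extra complex factor $\tfrac{3-2ic_u}{3(9+4c_u^2)}$ in the cubic coefficient, without which neither the real part (producing the precise $(9+4c_u^2)/(7+3c_u^2)$ ratio) nor the imaginary part (fixing $\omega_0^\ast \neq 0$) of the reduced equation would give the stated formulas. A related subtlety is that the imaginary part of the amplitude equation can only be solved by admitting a non-trivial phase velocity correction, which is precisely why the ansatz $\cphase = c_u + \varepsilon^2\omega_0$ is used; the Jacobian computation that closes the IFT step is transparent because the $\omega_0$-dependence appears linearly and independently of the $\vert A\vert$-dependence at leading order.
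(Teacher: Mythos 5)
Your proposal adopts a different setup from the paper's: you write the traveling wave profile as the solution of a spatial ODE in $p$ and apply a center manifold reduction in spatial dynamics, whereas the paper treats the co-moving PDE as a temporal dynamical system on $2\pi$-periodic functions (with $v_0 = \varepsilon^2 B$ held fixed) and applies the Haragus--Iooss center manifold theorem to obtain a one-complex-dimensional reduced ODE in $T = \varepsilon^2 t$, of which the traveling wave is a relative equilibrium. Both routes lead to the same algebraic amplitude equation, and your slaved-mode coefficients match the paper's, since
\begin{align*}
\dfrac{i}{9+6ic_u} = \dfrac{2c_u+3i}{3(9+4c_u^2)} \quad\text{and}\quad \dfrac{-4\gamma_1+2i\gamma_2}{4-2i(c_u+c_v)} = \dfrac{-2\gamma_1+i\gamma_2}{2-i(c_u+c_v)};
\end{align*}
the Burgers feedback, the $uv$-contribution $AB + \bar A V_2$, the cubic $-3A\snorm{A}^2$, and the gauge-fix plus implicit-function-theorem step at the end are all the same as in the paper.

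There is, however, a concrete error in your spectral analysis of the spatial linearization. The characteristic polynomial $-(1+\nu^2)(1+\nu^2-c_u\nu)$ has \emph{simple} roots at $\nu=\pm i$, not double roots: the second factor at $\nu=\pm i$ equals $\mp ic_u\neq 0$ precisely because $c_u\neq 0$. There are no Jordan blocks here — the dispersive term $c_u\partial_x^3$ is exactly what lifts the degeneracy present in the reflection-symmetric Swift--Hohenberg equation, where $(1+\nu^2)^2$ does have double roots. Your phrasing ``double roots at $\nu=\pm i$ together with two off-axis roots'' is also internally inconsistent for a degree-$4$ polynomial. The correct picture is a three-(real-)dimensional spatial center manifold: the simple eigenvalues $\pm i$ plus the zero from the conservation law. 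This mistake does not propagate into your $u_2$, $V_2$ and amplitude computations, which tacitly use the correct simple-eigenvalue structure, but the spectral claim as stated is wrong and should be fixed.
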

\begin{proof}
	To apply center manifold theory, we first rewrite the system \eqref{eq:SHDis}--\eqref{eq:ConDis} in a co-moving frame with speed $\cphase$, which yields
	\begin{align*}
		\partial_t u &= -(1+\partial_p^2)^2 u + \varepsilon^2 \alpha_0 u + c_u \partial_p^3 u + \cphase\partial_p u + uv + \dfrac{1}{2} \partial_p (u^2) - u^3, \\
		\partial_t v &= \partial_p^2 v + (c_v + \cphase) \partial_p v + \gamma_1\partial_p^2(u^2) + \gamma_2 \partial_p(u^2),
	\end{align*}
	with $p = x-\cphase t$.
	We now apply the center manifold theory in \cite{haragusIooss11} using the spaces
	\begin{align*}
		\curlZ_B &= \left\{(u,v) \in H^4_\text{per} \times H^2_\text{per} \,:\, u_n = \bar{u}_{-n}, v_n = \bar{v}_{-n}, v_0 = \varepsilon^2 B\right\}, \\
		\curlY_B &= \left\{(u,v) \in H^2_\text{per} \times H^2_\text{per} \,:\, u_n = \bar{u}_{-n}, v_n = \bar{v}_{-n}, v_0 = \varepsilon^2 B\right\}, \\
		\curlX_B &= \left\{(u,v) \in H^0_\text{per} \times H^0_\text{per} \,:\, u_n = \bar{u}_{-n}, v_n = \bar{v}_{-n}, v_0 = \varepsilon^2 B\right\},
	\end{align*}
	with $B \in \R$ and $H_\text{per}^n$ the space of $2\pi$-periodic $H^n$-functions for $n \in \N$.
	Here $H^n$ denotes the $L^2$-based Sobolev spaces, see \cite{schneiderUecker17}.
	We highlight that these spaces are invariant under the dynamics of \eqref{eq:SHDis}--\eqref{eq:ConDis}.
	The linear part $L$ is given by
	\begin{align*}
		L = \begin{pmatrix}
			-(1+\partial_p^2)^2 + \varepsilon^2 \alpha_0 + c_u \partial_p^3 + \cphase\partial_p & 0 \\
			0 & \partial_p^2 + (c_v + \cphase)\partial_p
		\end{pmatrix}
	\end{align*}
	and its spectrum can be calculated explicitly using Fourier transform as
	\begin{align*}
		\lambda_{u,n} &= -(1-n^2)^2 + \varepsilon^2 \alpha_0 + i(n-n^3)c_u + in\varepsilon^2\omega_0, \quad n \in \Z, \\
		\lambda_{v,n} &= -n^2 + in(c_u + c_v +\varepsilon^2\omega_0), \quad n \in \Z \setminus\{0\},
	\end{align*}
	where we made the ansatz $\cphase = c_u + \varepsilon^2 \omega_0$.
	Note that since we consider spaces with fixed mode $v_0 = \varepsilon^2 B$, we find that $\lambda_{v,0} = 0$ is not part of the spectrum.
	Thus, the spectrum can be decomposed in a central part $\sigma_c(L) = \{\lambda_{u,\pm1}\}$ and a remaining hyperbolic part $\sigma_h(L)$.
	Since we also have a resolvent estimate by explicit calculation in Fourier space and $\curlZ_B$, $\curlY_B$ and $\curlX_B$ are Hilbert spaces the center manifold results available in \cite{haragusIooss11} are applicable.
	Therefore, we introduce the following coordinates on the center manifold
	\begin{align*}
		u(t) &= \varepsilon A(\varepsilon^2 t) e^{ip} + c.c. + h_u(\varepsilon A), \\
		v(t) &= \varepsilon^2 B + h_v(\varepsilon A).
	\end{align*}
	To obtain an approximation of $h_u$ and $h_v$ we apply \cite[Corollary 2.12]{haragusIooss11} and use that $h_u, h_v = \curlO(\varepsilon^2)$, since they depend at least quadratically on $\varepsilon A$.
	Thus, we obtain that $h_u$ satisfies
	\begin{align*}
		0 &= -(1+\partial_p^2)^2 h_u + c_u (\partial_p + \partial_p^3) h_u + \dfrac{1}{2}\partial_p(\varepsilon^2 A^2 e^{2ip} + c.c. + 2\varepsilon^2 \snorm{A}^2) + \curlO(\varepsilon^4) \\
		&= -(1+\partial_p^2)^2 h_u + c_u (\partial_p + \partial_p^3) h_u + \varepsilon^2 (i A^2 e^{2ip} + c.c.) + \curlO(\varepsilon^4).
	\end{align*}
	Furthermore, $h_v$ satisfies
	\begin{align*}
		0 &= \partial_p^2 h_v + (c_v + c_u) \partial_p h_v + (\gamma_1 \partial_p^2 + \gamma_2 \partial_p)\left(\varepsilon^2 A^2 e^{2ip} + c.c. + \varepsilon^2 \snorm{A}^2\right) + \curlO(\varepsilon^4) \\
		&= \partial_p^2 h_v + (c_v + c_u) \partial_p h_v + (-4\gamma_1 + 2i\gamma_2) (A^2 e^{2ip} + c.c.) + \curlO(\varepsilon^4).
	\end{align*}
	Hence, we find
	\begin{align*}
		h_u &= \varepsilon^2 \dfrac{i}{9+6ic_u} A^2 e^{2ip} + c.c. + \curlO(\varepsilon^3), \\
		h_v &= \varepsilon^2 \dfrac{-2\gamma_1 + i\gamma_2}{2-i(c_u+c_v)} A^2 e^{2ip} + c.c. + \curlO(\varepsilon^3).
	\end{align*}
	Therefore, we arrive at a reduced equation on the center manifold, which is given by
	\begin{align*}
		\partial_T A = (\alpha_0 + i\omega_0) A + AB + \left(-3-\dfrac{1}{9+6ic_u} + \dfrac{-2\gamma_1 + i\gamma_2}{2-i(c_u+c_v)}\right)A\snorm{A}^2 + g(A,\varepsilon),
	\end{align*}
	with $g(A,\varepsilon) = \curlO(\varepsilon^2)$.
	For $(\varepsilon,\gamma_1,\gamma_2) = 0$, this equation has a family of non-trivial stationary solutions if and only if
	\begin{align*}
		\snorm{A}^2 = \left(3+\dfrac{1}{9+6ic_u}\right)^{-1}(\alpha_0+B + i\omega_0)
	\end{align*}
	is real and positive.
	Therefore, we calculate
	\begin{align*}
		\Im\left[\left(3+\dfrac{1}{9+6ic_u}\right)^{-1}(\alpha_0+B + i\omega_0)\right] &= \dfrac{3c_u(\alpha_0+B)+3(42+18c_u^2)\omega_0}{392+161c_u^2}.
	\end{align*}
	This expression vanishes if and only if
	\begin{align*}
		\omega_0 = \omega_0^\ast := -\dfrac{c_u(B+\alpha_0)	}{6(7+3c_u^2)}.
	\end{align*}
	Then, the expression for $\snorm{A}^2$ simplifies to
	\begin{align*}
		\snorm{A}^2 = \snorm{A^\ast}^2 := \dfrac{(9+4c_u^2)(B+\alpha_0)}{4(7+3c_u^2)},
	\end{align*}
	which is positive since $B+\alpha_0 > 0$ by assumption.
	Therefore, we have a non-trivial, stationary solution of the reduced equation for $(\varepsilon, \gamma_1, \gamma_2) = (0,0,0)$.
	
	It remains to show that this solution persists for $(\varepsilon,\gamma_1,\gamma_2)$ close to zero.
	Recalling that the system \eqref{eq:SHDis}--\eqref{eq:ConDis} is translationally invariant, we find that the reduced equation is invariant with respect to $A \mapsto A e^{i\phi}$.
	Therefore, we may assume that the solution is real and positive and satisfies
	\begin{align*}
		0 &= (\alpha_0 + B) A + a^\text{cub}(\gamma_1,\gamma_2) A^3 + \Re(g(A,\varepsilon)) =: G_1(A,\omega_0,\varepsilon,\gamma_1,\gamma_2), \\
		0 &= \omega_0 A + b^\text{cub}(\gamma_1,\gamma_2) A^3 + \Im(g(A,\varepsilon)) =: G_2(A,\omega_0,\varepsilon,\gamma_1,\gamma_2),
	\end{align*}
	where the coefficients are given by
	\begin{align*}
		a^\text{cub}(\gamma_1,\gamma_2) &= \Re\left(-3-\dfrac{1}{9+6ic_u} + \dfrac{-2\gamma_1 + i\gamma_2}{2-i(c_u+c_v)}\right), \\
		b^\text{cub}(\gamma_1,\gamma_2) &= \Im\left(-3-\dfrac{1}{9+6ic_u} + \dfrac{-2\gamma_1 + i\gamma_2}{2-i(c_u+c_v)}\right).
	\end{align*}
	Additionally, we define $G := (G_1,G_2)^T$.
	Then, by construction it holds that $G(A^\ast,\omega_0^\ast,0,0,0) = 0$ and we have
	\begin{align*}
		D_{(A,\omega_0)} G\vert_{(A,\omega_0,\varepsilon,\gamma_1,\gamma_2)=(A^\ast,\omega_0^\ast,0,0,0)} = \begin{pmatrix}
			\alpha_0 + B + 3 a^\text{cub}(0,0) (A^\ast)^2 & 0 \\
			\omega_0^\ast + 3 b^\text{cub}(0,0) (A^\ast)^2 & A^\ast
		\end{pmatrix}.
	\end{align*}
	Since $A^\ast > 0$, this matrix is invertible if and only if
	\begin{align*}
		\alpha_0 + B + 3 a^\text{cub}(0,0) (A^\ast)^2 = -2(B+\alpha_0) \neq 0,
	\end{align*}
	which is true since $B+ \alpha_0 >0$ by assumption.
	Therefore, the solutions persist by the implicit function theorem and we obtain the statement of the lemma.
\end{proof}

\begin{remark}
	Since the system \eqref{eq:SHDis}--\eqref{eq:ConDis} is translationally invariant, Lemma \ref{lem:travelingWaves} actually establishes a family of periodic solutions $(u_{\text{tw},x_0},v_{\text{tw},x_0})(p) = (\utw,\vtw)(p+x_0)$ for all $x_0 \in [0,2\pi)$.
	Although this paper is restricted to the construction of modulating fronts, which connect to $(\utw,\vtw)$, fronts connecting to any other periodic solution $(u_{\text{tw},x_0},v_{\text{tw},x_0})$ can be obtained in the same fashion.
\end{remark}

\section{Spatial dynamics and center manifold result}\label{sec:spatialDynamics}

We now proceed in the construction of modulating traveling front solutions for the system \eqref{eq:SHDis}--\eqref{eq:ConDis}, which connect the periodic state obtained in Lemma \ref{lem:travelingWaves} to the homogeneous ground state $(u,v) = (0,0)$.
The aim of this section is to obtain the spatial dynamics formulation and establish a center manifold result.
Therefore, we make the following ansatz
\begin{align}
	(u,v)(t,x) = (\curlU,\curlV)(x-ct,x-\cphase t) = (\curlU,\curlV)(\xi,p),
	\label{eq:modfrontAnsatz}
\end{align}
which is assumed to be $2\pi$-periodic with respect to its second argument and satisfies the asymptotic conditions
\begin{align*}
	\lim_{\xi \rightarrow -\infty} (\curlU,\curlV)(\xi,p) = (\utw,\vtw)(p) \text{ and }\lim_{\xi \rightarrow \infty} (\curlU,\curlV)(\xi,p) = (0,0).
\end{align*}
Here, $\cphase = c_u + \varepsilon^2 \omega_0^\ast$ is the phase velocity of the periodic solution constructed in Lemma \ref{lem:travelingWaves} and $c \in \R$ is the velocity of the modulating front.
Inserting this ansatz into \eqref{eq:SHDis}--\eqref{eq:ConDis}, we obtain
\begin{subequations}
	\begin{align}
		-c\partial_\xi \curlU - \cphase \partial_p \curlU &= -(1+(\partial_\xi + \partial_p)^2)^2 \curlU + \varepsilon^2 \alpha_0 \curlU + c_u (\partial_\xi + \partial_p)^3 \curlU + \curlU\curlV + \dfrac{1}{2}(\partial_\xi + \partial_p)(\curlU^2) - \curlU^3, \label{eq:modfrontSysSH}\\
		-c\partial_\xi \curlV - \cphase \partial_p \curlV &= (\partial_\xi + \partial_p)^2 \curlV + c_v (\partial_\xi + \partial_p) \curlV + \gamma_1 (\partial_\xi + \partial_p)^2(\curlU^2) + \gamma_2(\partial_\xi + \partial_p)(\curlU^2). \label{eq:modfrontSysCon}
	\end{align}
\end{subequations}
Using the periodicity with respect to $p$ we can expand $(\curlU,\curlV)$ into a Fourier series in $p$, that is
\begin{align*}
	(\curlU,\curlV)(\xi,p) = \sum_{n \in \Z} (\curlU_n,\curlV_n)(\xi) e^{inp}.
\end{align*}
We point out that since the solution $(\curlU,\curlV)$ is real-valued, the Fourier coefficients satisfy $\curlU_n = \overline{\curlU_{-n}}$ and $\curlV_n = \overline{\curlV_{-n}}$ for all $n \in \Z$.
Here, $\overline{(\cdot)}$ denotes complex conjugation.
Inserting this Fourier series in \eqref{eq:modfrontSysSH}--\eqref{eq:modfrontSysCon} and writing the resulting equation as a first order system in $\xi$, we obtain the spatial dynamics formulation
\begin{align}
	\partial_\xi \begin{pmatrix}
		U_n \\ V_n
	\end{pmatrix} = \begin{pmatrix}
		L_n^\text{SH} & 0 \\
		0 & L_n^\text{con}
	\end{pmatrix} \begin{pmatrix}
		U_n \\ V_n
	\end{pmatrix} + \curlN_n(U,V)
	\label{eq:spatDyn}
\end{align}
with $n \in \Z$.
Here, $(U_n,V_n) = (U_{n0},U_{n1},U_{n2},U_{n3},V_{n0},V_{n1})$ with $U_{nj} = \partial_\xi^j \curlU_n$ for $j = 0,1,2,3$ and $V_{nj} = \partial_\xi^j \curlV_n$ for $j = 0,1$.
The matrices $L_n^\text{SH} \in \C^{4\times 4}$ and $L_n^\text{con} \in \C^{2\times 2}$ are given by
\begin{align*}
	L_n^\text{SH} = \begin{pmatrix}
		0 & 1 & 0 & 0 \\
		0 & 0 & 1 & 0 \\
		0 & 0 & 0 & 1 \\
		A_n & B_n & C_n & D_n
	\end{pmatrix}, \qquad L_n^\text{con} = \begin{pmatrix}
		0 & 1 \\
		E_n & F_n
	\end{pmatrix}
\end{align*}
with
\begin{align*}
	A_n &= \varepsilon^2 \alpha_0 - ic_u n^3 + in\cphase - (1-n^2)^2, \\
	B_n &= -3c_u n^2 + c + 4i(n^3-n), \\
	C_n &= 3inc_u + 6n^2 - 2, \\
	D_n &= c_u - 4in, \\
	E_n &= -inc_v + n^2 - in\cphase, \\
	F_n &= -(c+c_v+2in).
\end{align*}
Additionally, the nonlinearity $\curlN_n(U,V)$ is of the form $(0,0,0,\curlN_n^\text{SH}(U,V),0,\curlN_n^\text{con}(U,V))^T$ with $\curlN_n^\text{SH}$ and $\curlN_n^\text{con}$ given by
\begin{subequations}
\begin{align}
	\curlN_n^\text{SH}(U,V) &= \sum_{k+j=n} U_{k0} V_{j0} + \sum_{k+j=n} U_{k0}(U_{j1} + ijU_{j0}) - \sum_{k+j+l=n} U_{k0}U_{j0}U_{l0}, \label{eq:nonlinSH}\\
	\curlN_n^\text{con}(U,V) &= -2\gamma_1 \sum_{k+j = n} (U_{k0} U_{j2} + U_{k1} U_{j1}) - 2\gamma_1 in \sum_{k+j = n} U_{k0} U_{j1} + 2\gamma_1 n^2 \sum_{k+j = n} U_{k0} U_{j0} \nonumber\\
	&\qquad\qquad- 2\gamma_2 \sum_{k+j = n} U_{k0} U_{j1} - \gamma_2 in \sum_{k+j=n} U_{k0} U_{j0}. \label{eq:nonlinCon}
\end{align}
\end{subequations}

\subsection{Function spaces and spectrum}

Following the setup in \cite{eckmannWayne91,hilder20} we define the function space
\begin{align*}
	H^\ell(\C^m) := \left\{U(p) = \sum_{n \in \Z} U_n e^{inp}, U_n \in \C^m \,:\, \norm[H^\ell]{U} < \infty\right\},
\end{align*}
where $\ell > 0$, $m \in \N$ and the norm $\norm[H^\ell]{\cdot}$ is defined by
\begin{align*}
	\norm[H^\ell]{U}^2 := \sum_{n \in \Z} (1+n^2)^\ell \snorm{U_n}^2.
\end{align*}
This is a Hilbert space for $\ell > 0$ and a Banach algebra for $\ell > 1/2$.
Next, we define the space $\curlE_{\ell_1,\ell_2} := H^{\ell_1}(\C^4) \times H^{\ell_2}(\C^2)$, which is equipped with the norm $\norm[\curlE_{\ell_1,\ell_2}]{(U,V)} = \norm[H^{\ell_1}]{U} + \norm[H^{\ell_2}]{V}$ for $U \in H^{\ell_1}(\C^4)$ and $V \in H^{\ell_2}(\C^2)$.
Furthermore, in the special case that $\ell_1 = \ell_2$ we write $\curlE_\ell = \curlE_{\ell,\ell}$.
In this setting, the spatial system \eqref{eq:spatDyn} then reads as
\begin{align}
	\partial_\xi \begin{pmatrix}
		U \\ V
	\end{pmatrix} = L \begin{pmatrix}
		U \\ V
	\end{pmatrix} + \curlN(U,V)
	\label{eq:spatDynFinal}
\end{align}
with $(U,V) \in \curlE_\ell$, the linear operator $L : \curlE_{\ell+4,\ell+2} \rightarrow \curlE_\ell$ and the nonlinearity $\curlN : \curlE_{\ell+4,\ell+2} \rightarrow \curlE_{\ell+2}$, which are given by
\begin{align*}
	L\begin{pmatrix}
		U \\ V
	\end{pmatrix} = \sum_{n \in \Z} \begin{pmatrix}
		L_n^\text{SH} & 0 \\
		0 & L_n^\text{con}
	\end{pmatrix} \begin{pmatrix}
		U_n \\ V_n
	\end{pmatrix} e^{inp} \text{ and } \curlN(U,V) = \sum_{n \in \Z} \curlN_n(U,V) e^{inp}.
\end{align*}
Utilizing the diagonal structure of $L$, its spectrum is given by the union of the spectra of $L_n^\text{SH}$ and $L_n^\text{con}$ for all $n \in \Z$.
These are given in the following lemmas and the results are schematically depicted in Figure \ref{fig:spectralSituations}, see also \cite[Figure 6]{hilder20}.

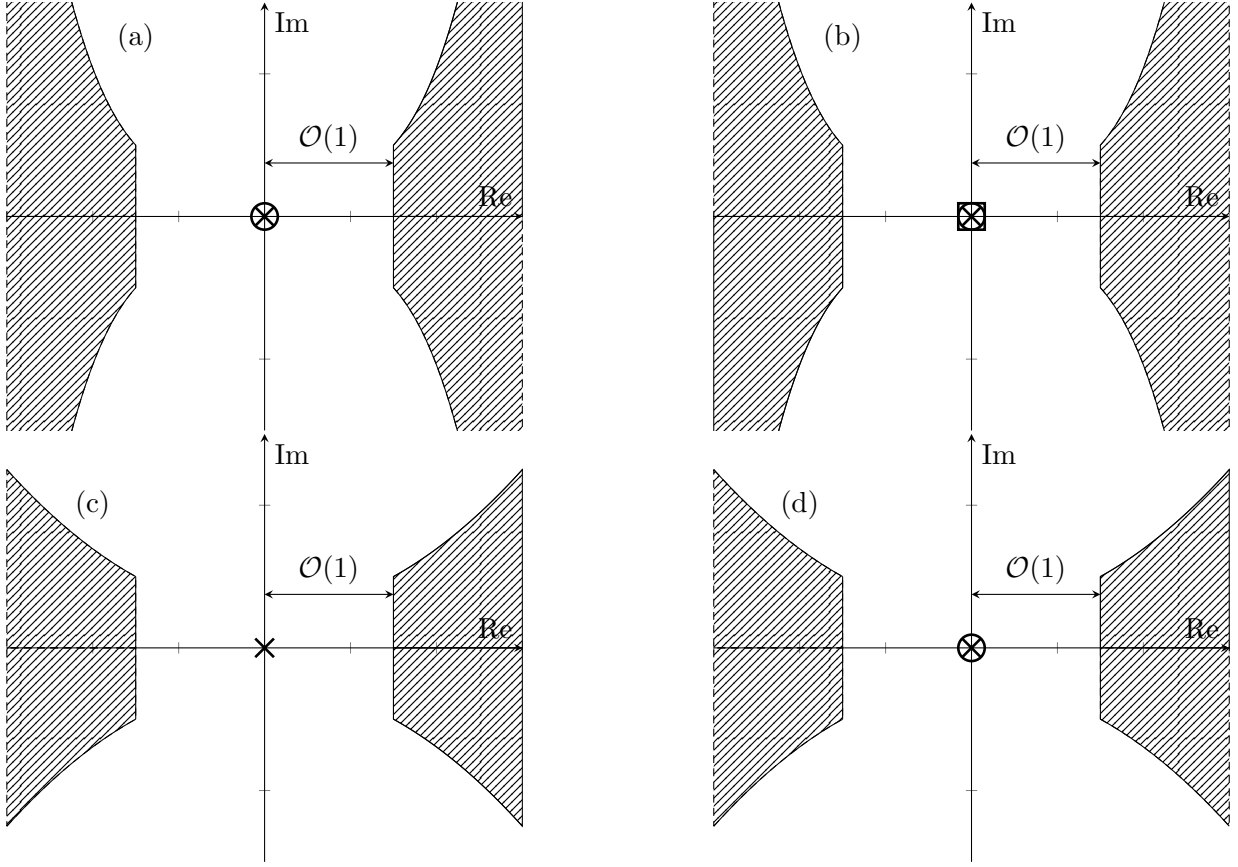
\begin{figure}
	\begin{tikzpicture}
		\begin{axis}[
				xmin = -3,
				xmax = 3,
				ymin = -3,
				ymax = 3,
				xticklabels = \empty,
				yticklabels = \empty,
				xlabel = $\Re$,
				ylabel = $\Im$,
				x label style={anchor=south west},
				axis x line = center,
				axis y line = center,
			]
			\addplot[mark=x,only marks, mark size = 5pt, mark options={line width = 1pt}] coordinates {(0,0)};
			\addplot[mark=o,only marks, mark size = 5pt, mark options={line width = 1pt}] coordinates {(0,0)};
			
			\node at (axis cs:-1.5,2.5){(a)};
			
			\draw (axis cs:1.5,1)--(axis cs:1.5,-1);
			\draw (axis cs:-1.5,1)--(axis cs:-1.5,-1);
			
			\draw[stealth-stealth] (axis cs: 0,0.75) -- (axis cs: 1.5,0.75) node[midway,above]{$\curlO(1)$};
			
			\addplot[mark=none,samples=100,domain=1.5:3] {0.5*(x/1.5)^4+0.5};
			\addplot[mark=none,samples=100,domain=1.5:3] {-0.5*(x/1.5)^4-0.5};
			
			\addplot+[mark=none,domain=1.5:3,samples=100,%
              pattern=north east lines,%
              draw = black,
              pattern color=black]%
              {0.5*(x/1.5)^4+0.5}
              \closedcycle;
            \addplot+[mark=none,domain=1.5:3,samples=100,%
              pattern=north east lines,%
              draw = black,
              pattern color=black]%
              {-0.5*(x/1.5)^4-0.5}
              \closedcycle;
              
            \addplot[mark=none,samples=100,domain=-3:-1.5] {0.5*(x/1.5)^4+0.5};
			\addplot[mark=none,samples=100,domain=-3:-1.5] {-0.5*(x/1.5)^4-0.5};
			
			\addplot+[mark=none,domain=-3:-1.5,samples=100,%
              pattern=north east lines,%
              draw = black,
              pattern color=black]%
              {0.5*(x/1.5)^4+0.5}
              \closedcycle;
            \addplot+[mark=none,domain=-3:-1.5,samples=100,%
              pattern=north east lines,%
              draw = black,
              pattern color=black]%
              {-0.5*(x/1.5)^4-0.5}
              \closedcycle;
		\end{axis}
	\end{tikzpicture}
	\hfill
	\begin{tikzpicture}
		\begin{axis}[
				xmin = -3,
				xmax = 3,
				ymin = -3,
				ymax = 3,
				xticklabels = \empty,
				yticklabels = \empty,
				xlabel = $\Re$,
				ylabel = $\Im$,
				x label style={anchor=south west},
				axis x line = center,
				axis y line = center,
			]
			\addplot[mark=x,only marks, mark size = 5pt, mark options={line width = 1pt}] coordinates {(0,0)};
			\addplot[mark=o,only marks, mark size = 5pt, mark options={line width = 1pt}] coordinates {(0,0)};
			\addplot[mark=square,only marks, mark size = 5pt, mark options={line width = 1pt}] coordinates {(0,0)};
			
			\node at (axis cs:-1.5,2.5){(b)};
			
			\draw (axis cs:1.5,1)--(axis cs:1.5,-1);
			\draw (axis cs:-1.5,1)--(axis cs:-1.5,-1);
			
			\draw[stealth-stealth] (axis cs: 0,0.75) -- (axis cs: 1.5,0.75) node[midway,above]{$\curlO(1)$};
			
			\addplot[mark=none,samples=100,domain=1.5:3] {0.5*(x/1.5)^4+0.5};
			\addplot[mark=none,samples=100,domain=1.5:3] {-0.5*(x/1.5)^4-0.5};
			
			\addplot+[mark=none,domain=1.5:3,samples=100,%
              pattern=north east lines,%
              draw = black,
              pattern color=black]%
              {0.5*(x/1.5)^4+0.5}
              \closedcycle;
            \addplot+[mark=none,domain=1.5:3,samples=100,%
              pattern=north east lines,%
              draw = black,
              pattern color=black]%
              {-0.5*(x/1.5)^4-0.5}
              \closedcycle;
              
            \addplot[mark=none,samples=100,domain=-3:-1.5] {0.5*(x/1.5)^4+0.5};
			\addplot[mark=none,samples=100,domain=-3:-1.5] {-0.5*(x/1.5)^4-0.5};
			
			\addplot+[mark=none,domain=-3:-1.5,samples=100,%
              pattern=north east lines,%
              draw = black,
              pattern color=black]%
              {0.5*(x/1.5)^4+0.5}
              \closedcycle;
            \addplot+[mark=none,domain=-3:-1.5,samples=100,%
              pattern=north east lines,%
              draw = black,
              pattern color=black]%
              {-0.5*(x/1.5)^4-0.5}
              \closedcycle;
		\end{axis}
	\end{tikzpicture}
	
	\begin{tikzpicture}
		\begin{axis}[
				xmin = -3,
				xmax = 3,
				ymin = -3,
				ymax = 3,
				xticklabels = \empty,
				yticklabels = \empty,
				xlabel = $\Re$,
				ylabel = $\Im$,
				x label style={anchor=south west},
				axis x line = center,
				axis y line = center,
			]
			\addplot[mark=x,only marks, mark size = 5pt, mark options={line width = 1pt}] coordinates {(0,0)};
			
			\node at (axis cs:-2,2){(c)};
			
			\draw (axis cs:1.5,1)--(axis cs:1.5,-1);
			\draw (axis cs:-1.5,1)--(axis cs:-1.5,-1);
			
			\draw[stealth-stealth] (axis cs: 0,0.75) -- (axis cs: 1.5,0.75) node[midway,above]{$\curlO(1)$};
			
			\addplot[mark=none,samples=100,domain=1.5:3] {0.5*(x/1.5)^2+0.5};
			\addplot[mark=none,samples=100,domain=1.5:3] {-0.5*(x/1.5)^2-0.5};
			
			\addplot+[mark=none,domain=1.5:3,samples=100,%
              pattern=north east lines,%
              draw = black,
              pattern color=black]%
              {0.5*(x/1.5)^2+0.5}
              \closedcycle;
            \addplot+[mark=none,domain=1.5:3,samples=100,%
              pattern=north east lines,%
              draw = black,
              pattern color=black]%
              {-0.5*(x/1.5)^2-0.5}
              \closedcycle;
              
            \addplot[mark=none,samples=100,domain=-3:-1.5] {0.5*(x/1.5)^2+0.5};
			\addplot[mark=none,samples=100,domain=-3:-1.5] {-0.5*(x/1.5)^2-0.5};
			
			\addplot+[mark=none,domain=-3:-1.5,samples=100,%
              pattern=north east lines,%
              draw = black,
              pattern color=black]%
              {0.5*(x/1.5)^2+0.5}
              \closedcycle;
            \addplot+[mark=none,domain=-3:-1.5,samples=100,%
              pattern=north east lines,%
              draw = black,
              pattern color=black]%
              {-0.5*(x/1.5)^2-0.5}
              \closedcycle;
		\end{axis}
	\end{tikzpicture}
	\hfill
	\begin{tikzpicture}
		\begin{axis}[
				xmin = -3,
				xmax = 3,
				ymin = -3,
				ymax = 3,
				xticklabels = \empty,
				yticklabels = \empty,
				xlabel = $\Re$,
				ylabel = $\Im$,
				x label style={anchor=south west},
				axis x line = center,
				axis y line = center,
			]
			\addplot[mark=x,only marks, mark size = 5pt, mark options={line width = 1pt}] coordinates {(0,0)};
			\addplot[mark=o,only marks, mark size = 5pt, mark options={line width = 1pt}] coordinates {(0,0)};
			
			\node at (axis cs:-2,2){(d)};
			
			\draw (axis cs:1.5,1)--(axis cs:1.5,-1);
			\draw (axis cs:-1.5,1)--(axis cs:-1.5,-1);
			
			\draw[stealth-stealth] (axis cs: 0,0.75) -- (axis cs: 1.5,0.75) node[midway,above]{$\curlO(1)$};
			
			\addplot[mark=none,samples=100,domain=1.5:3] {0.5*(x/1.5)^2+0.5};
			\addplot[mark=none,samples=100,domain=1.5:3] {-0.5*(x/1.5)^2-0.5};
			
			\addplot+[mark=none,domain=1.5:3,samples=100,%
              pattern=north east lines,%
              draw = black,
              pattern color=black]%
              {0.5*(x/1.5)^2+0.5}
              \closedcycle;
            \addplot+[mark=none,domain=1.5:3,samples=100,%
              pattern=north east lines,%
              draw = black,
              pattern color=black]%
              {-0.5*(x/1.5)^2-0.5}
              \closedcycle;
              
            \addplot[mark=none,samples=100,domain=-3:-1.5] {0.5*(x/1.5)^2+0.5};
			\addplot[mark=none,samples=100,domain=-3:-1.5] {-0.5*(x/1.5)^2-0.5};
			
			\addplot+[mark=none,domain=-3:-1.5,samples=100,%
              pattern=north east lines,%
              draw = black,
              pattern color=black]%
              {0.5*(x/1.5)^2+0.5}
              \closedcycle;
            \addplot+[mark=none,domain=-3:-1.5,samples=100,%
              pattern=north east lines,%
              draw = black,
              pattern color=black]%
              {-0.5*(x/1.5)^2-0.5}
              \closedcycle;
		\end{axis}
	\end{tikzpicture}
	\caption{Depiction of the spectral results of Lemma \ref{lem:specLSH} and \ref{lem:specLcon} at $\varepsilon = 0$. (a) shows the spectrum of $L^\text{SH}$ if $c\neq 3c_u$ with a double central eigenvalue at zero. (b) shows the spectrum of $L^\text{SH}$ if $c = 3c_u$ with four eigenvalues at zero (two from $L^\text{SH}_{1}$ and two from $L^\text{SH}_{-1}$). (c) shows the spectrum of $L^\text{con}$ if $c \neq -c_v$ with a simple eigenvalue at zero. (d) shows the spectrum of $L^\text{con}$ if $c = -c_v$ with a double eigenvalue at zero. In each case the hyperbolic part of the spectrum is contained in the hatched area, which is $\curlO(1)$ bounded away from the imaginary axis.}
	\label{fig:spectralSituations}
\end{figure}

\begin{lemma}\label{lem:specLSH}
	Let $c\vert_{\varepsilon = 0} \neq c_u$.
	Then, there exists an $\varepsilon_0 > 0$ such that for all $\varepsilon \in (0,\varepsilon_0)$ the following holds.
	\begin{itemize}
		\item If $c\vert_{\varepsilon = 0} \neq 3c_u$, the matrix $L_n^\text{SH}$ for $n = \pm 1$ has a simple, central eigenvalue $\lambda_{c,\pm 1}$, which vanishes for $\varepsilon \rightarrow 0$.
		\item If $c = 3c_u + \curlO(\varepsilon)$, the matrix $L_n^\text{SH}$ for $n = \pm 1$ has two central eigenvalues $\lambda_{c,\pm 1}^{1/2}$, which vanish for $\varepsilon \rightarrow 0$.
	\end{itemize}
	All other eigenvalues of the $L_n^\text{SH}$ are bounded away from the imaginary axis uniformly in $\varepsilon$ and $n \in \Z$.
\end{lemma}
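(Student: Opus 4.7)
The plan is to realise the spatial eigenvalue problem for the companion matrix $L_n^{\text{SH}}$ as finding the roots of its characteristic polynomial, which by a direct check agrees with the dispersion relation obtained by substituting $e^{\lambda\xi + inp}$ into the linearisation of \eqref{eq:SHDis} in the frame moving with speed $\cphase$:
$$p_n(\lambda,\varepsilon) := -(1+(\lambda+in)^2)^2 + \varepsilon^2 \alpha_0 + c_u(\lambda+in)^3 + c\lambda + in\cphase.$$
Evaluating at $\lambda = 0$, $\varepsilon = 0$ gives $p_n(0,0) = -(1-n^2)\bigl[(1-n^2) - ic_u n\bigr]$, which vanishes only at $n = \pm 1$. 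Thus only these Fourier modes can carry a central eigenvalue at $\varepsilon = 0$. Expanding $p_1(\lambda,0)$ explicitly (using $\cphase\vert_{\varepsilon = 0} = c_u$) gives
$$p_1(\lambda,0) = -\lambda^4 + (c_u - 4i)\lambda^3 + (4 + 3ic_u)\lambda^2 + (c - 3c_u)\lambda,$$
and $p_{-1}(\lambda,0) = \overline{p_1(\overline{\lambda},0)}$. Hence $\lambda = 0$ is a simple root if $c\vert_{\varepsilon = 0}\neq 3c_u$ (since $p_1'(0) = c - 3c_u$) and a double root if $c\vert_{\varepsilon = 0} = 3c_u$ (with $p_1''(0)/2 = 4 + 3ic_u \neq 0$).

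I would then continue these central roots for small $\varepsilon > 0$. In the simple-root regime the implicit function theorem yields one eigenvalue $\lambda_{c,\pm 1}(\varepsilon) = \curlO(\varepsilon^2)$. In the double-root regime, writing $c = 3c_u + \varepsilon c_0$ and balancing the leading terms $(4 + 3ic_u)\lambda^2$, $\varepsilon c_0 \lambda$ and $\varepsilon^2(\alpha_0 + i\omega_0^\ast)$ via the quadratic formula produces two roots of size $\curlO(\varepsilon)$. The mode $n = -1$ is handled by complex conjugation.

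For the uniform spectral gap it remains to exclude purely imaginary eigenvalues away from $\lambda_{c,\pm 1}$, uniformly in $n$ and $\varepsilon$. Setting $\lambda = i\beta$ in $p_n = 0$ and splitting into real and imaginary parts, the real part forces $(\beta + n)^2 = 1 \pm \varepsilon\sqrt{\alpha_0}$, so $\beta + n = s + \curlO(\varepsilon)$ with $s \in \{\pm 1\}$; substitution into the imaginary part yields $(c - c_u)(s - n) = \curlO(\varepsilon)$, which by the hypothesis $c\vert_{\varepsilon = 0}\neq c_u$ forces $n = s = \pm 1$. Together with continuity of the roots of $p_n$ in $\varepsilon$, this produces a uniform gap on any bounded window $|n|\le N$.

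The main obstacle is the regime $|n|\to\infty$. My approach would be to substitute $\mu = \lambda + in$ so that $p_n = 0$ reads $(1+\mu^2)^2 = c_u\mu^3 + c\mu - in(c - \cphase) + \varepsilon^2 \alpha_0$; for $|n|$ large the dominant balance is $(1+\mu^2)^2 \sim -in(c - \cphase)$, so $|\mu| \sim |n|^{1/4}$. Because $c, \cphase \in \R$ forces $\arg(c - \cphase) \in \{0,\pi\}$, the four fourth-roots of $-in(c - \cphase)$ have arguments that stay uniformly bounded away from $\pm \pi/2$, so $|\Re(\lambda)| = |\Re(\mu)| \gtrsim |n|^{1/4}$. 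Combined with the bounded-$n$ analysis above, this yields the required $\varepsilon$- and $n$-independent spectral gap.
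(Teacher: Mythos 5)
Your argument is correct and follows essentially the same route as the paper's: read off the characteristic polynomial, locate its central zeros at $n=\pm1$ (simple or double depending on whether $c\vert_{\varepsilon=0}=3c_u$), and combine the purely-imaginary exclusion for $n\neq\pm1$ with the $|n|^{1/4}$ growth of $\Re\lambda$ for $|n|\to\infty$ to get the uniform spectral gap. The only cosmetic difference is that you carry the $\varepsilon$-dependence through the imaginary-axis analysis directly, whereas the paper does it at $\varepsilon=0$ and then appeals to continuity of eigenvalues plus continuity of the asymptotic expansion coefficients; both are equally valid.
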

\begin{proof}
	First, we prove the statement for $\varepsilon = 0$ and then show that it persists for $\varepsilon > 0$ small.
	For $\varepsilon = 0$ it holds
	\begin{align*}
		\operatorname{det}(\lambda - L_n^\text{SH}) &= 1 - ic_u n -2n^2 + ic_u n^3 + n^4 + \lambda (-c + 4in + 3c_u n^2 - 4in^3) \\
		&\qquad\qquad+ \lambda^2 (2-3ic_u n - 6n^2) + \lambda^3(-c_u + 4in) + \lambda^4,
	\end{align*}
	where we used that $\cphase = c_u + \curlO(\varepsilon^2)$.
	For $n \in \Z$ with $\snorm{n}$ large, the four roots of the characteristic polynomial have the expansion
	\begin{align}
		\lambda = \begin{dcases}
			-in \pm \snorm{n}^{1/4}(-1)^{3/8} (c-c_u)^{1/4} + \curlO(\snorm{n}^\gamma) \\
			-in \pm \snorm{n}^{1/4}(-1)^{7/8} (c-c_u)^{1/4} + \curlO(\snorm{n}^\gamma)
		\end{dcases},
		\label{eq:asymptoticExpansionSH}
	\end{align}
	with $\gamma < 1/4$.
	Hence, the real part of $\lambda$ grows like $\snorm{n}^{1/4}$ for $\snorm{n} \rightarrow \infty$ since $c\vert_{\varepsilon = 0} \neq c_u$ by assumption.
	
	Next, we examine the existence of purely imaginary eigenvalues. We show that $L_n^\text{SH}$ has no purely imaginary eigenvalues if $n \neq \pm 1$.
	Additionally, we find that $L_n^\text{SH}$ with $n = \pm 1$ has a double eigenvalue on the imaginary axis if $c = 3c_u$ and a simple, purely imaginary eigenvalue else.
	Any purely imaginary eigenvalue $i\lambda_i$ with $\lambda_i \in \R$ satisfies
	\begin{align*}
		\Re(\det(i\lambda_i - L_n^\text{SH})) &= (n^2 - 1)^2 + (-4n + 4n^3)\lambda_i - \lambda_i^2 (2-6n^2) + 4n\lambda_i^3 + \lambda_i^4 \\
		&= (-1 + (n+\lambda_i)^2)^2 = 0
	\end{align*}
	and therefore $\lambda_i = -n \pm 1$, which are double roots.
	Computing the imaginary part of the characteristic polynomial yields
	\begin{align*}
		\Im(\det(i\lambda_i - L_n^\text{SH})) = nc_u (n^2-1) + \lambda_i (-c+3c_u n^2) + \lambda_i^2 3c_u n + \lambda^3 c_u.
	\end{align*}
	If $n = \pm 1$ this equation has a simple root at $\lambda_i = 0$ if $3c_u = c$ and a double root at $\lambda_i = 0$ if $3c_u = c$.
	Otherwise, inserting $\lambda_i = -n \pm 1$ yields $\Im(\det(i(-n\pm 1) - L_n^\text{SH})) = (c_u - c) (-n\pm 1)$.
	This is non-zero for all $n \neq \pm 1$ since $c\vert_{\varepsilon = 0} \neq c_u$ by assumption.
	
	It remains to prove that this setting persists for $\varepsilon > 0$ sufficiently small.
	Therefore, we note that that the eigenvalues of a matrix depend continously on its entries.
	Thus, for any $N \in \N$ there exists a $\varepsilon_0 > 0$ such that for all $\varepsilon \in (0,\varepsilon_0)$ and $\snorm{n} < N$ the spectrum of $L_n^\text{SH}$ is bounded away from the imaginary axis except for the central eigenvalues of $L_{\pm 1}^\text{SH}$, which stay close (with respect to $\varepsilon$) to the imaginary axis.
	Additionally, the coefficients of the asymptotic expansion in $n$, see \eqref{eq:asymptoticExpansionSH}, depend continuously on $\varepsilon$.
	Thus, for $n$ large, the real part of any eigenvalue of $L_n^\text{SH}$ grows like $\snorm{n}^{1/4}$.
	Therefore, all eigenvalues of $L_n^\text{SH}$ with $\snorm{n} > N$ can be uniformly bounded away from the imaginary axis by choosing $N$ sufficiently large.
	Hence, the number of eigenvalues close to the imaginary axis does not change for $\varepsilon \in (0,\varepsilon_0)$, which proves the lemma.
\end{proof}

\begin{lemma}\label{lem:specLcon}
	Let $c\vert_{\varepsilon = 0} \neq c_u$.
	Then, there exists an $\varepsilon_0 > 0$ such that for all $\varepsilon \in (0,\varepsilon_0)$ the following holds.
	\begin{itemize}
		\item If $c\vert_{\varepsilon = 0} \neq -c_v$, the matrix $L_0^\text{con}$ has one simple eigenvalue $\lambda = 0$.
		\item If $c = -c_v + \curlO(\varepsilon^m)$, $m \in \N$, the matrix $L_0^\text{con}$ has one simple eigenvalue $\lambda_0 = 0$ and one simple eigenvalue $\lambda_1 = -c-c_v = \curlO(\varepsilon^m)$.
	\end{itemize}
	All other eigenvalues of the $L_n^\text{con}$ are bounded away from the imaginary axis uniformly in $\varepsilon$ and $n \in \Z$.
\end{lemma}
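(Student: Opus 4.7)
The proof will mirror the structure used for Lemma \ref{lem:specLSH}: establish the claim first at $\varepsilon = 0$ by explicit computation, then propagate it to small $\varepsilon > 0$ via continuity of eigenvalues combined with an asymptotic estimate controlling the high-$n$ tail.

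The starting point is the explicit characteristic polynomial of $L_n^\text{con}$, namely
\begin{align*}
	\det(\lambda - L_n^\text{con}) = \lambda^2 + (c + c_v + 2in)\lambda - n^2 + in(c_v + \cphase),
\end{align*}
so the eigenvalues are
\begin{align*}
	\lambda = \tfrac{1}{2}\bigl(-(c+c_v+2in) \pm \sqrt{(c+c_v)^2 + 4in(c-\cphase)}\bigr),
\end{align*}
where I have used the algebraic identity $(c+c_v+2in)^2 + 4(n^2 - in(c_v+\cphase)) = (c+c_v)^2 + 4in(c-\cphase)$. For $n = 0$ this collapses to $\lambda_0 = 0$ and $\lambda_1 = -(c+c_v)$, which yields the two stated cases depending on whether $c\vert_{\varepsilon = 0}$ equals $-c_v$ or not, and records the claimed $\curlO(\varepsilon^m)$ size of $\lambda_1$.

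For $n \neq 0$ I first rule out purely imaginary eigenvalues at $\varepsilon = 0$. Substituting $\lambda = i\mu$ with $\mu \in \R$ and separating real and imaginary parts of the characteristic polynomial gives the system
\begin{align*}
	(\mu + n)^2 = 0, \qquad (c+c_v)\mu + n(c_v + \cphase) = 0,
\end{align*}
so necessarily $\mu = -n$ and then $n(\cphase - c) = 0$. At $\varepsilon = 0$ we have $\cphase = c_u \neq c$ by hypothesis, forcing $n = 0$. Hence for every $n \neq 0$ the spectrum of $L_n^\text{con}$ avoids $i\R$ at $\varepsilon = 0$.

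To upgrade this to a \emph{uniform} spectral gap in $n$, I use the large-$|n|$ asymptotics of the quadratic formula above: since $c - \cphase = c - c_u + \curlO(\varepsilon^2)$ is bounded away from zero, $\sqrt{(c+c_v)^2 + 4in(c-\cphase)}$ has modulus and real part of order $|n|^{1/2}$, so the two eigenvalues satisfy $\lambda = -in \pm \sqrt{in(c-\cphase)} + \curlO(1)$ and their real parts grow like $|n|^{1/2}$. Thus there exist $N \in \N$ and $\delta > 0$, independent of $\varepsilon$ in a small neighborhood of zero, such that $|\Re\lambda| \geq \delta$ for every eigenvalue of every $L_n^\text{con}$ with $|n| \geq N$. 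For the finitely many remaining indices $1 \leq |n| < N$ the matrix entries depend continuously (in fact polynomially) on $\varepsilon$ through $\cphase = c_u + \curlO(\varepsilon^2)$, so their eigenvalues depend continuously on $\varepsilon$; combined with the absence of imaginary eigenvalues at $\varepsilon = 0$, this gives a uniform spectral gap for small $\varepsilon > 0$. The $n = 0$ case is handled by the explicit formula above, completing the proof. I do not foresee a genuine obstacle; the only care needed is the algebraic simplification that makes the $c - \cphase$ factor visible inside the discriminant, since this is exactly what encodes the hypothesis $c\vert_{\varepsilon = 0} \neq c_u$ and drives both the absence of imaginary eigenvalues and the $|n|^{1/2}$ growth.
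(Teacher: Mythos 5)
Your proof is correct and takes essentially the same route as the paper: explicit computation of the characteristic polynomial of $L_n^\text{con}$, ruling out imaginary eigenvalues for $n \neq 0$ at $\varepsilon = 0$ by separating real and imaginary parts, an $|n|^{1/2}$-growth estimate on the real parts for large $|n|$ driven by the factor $c - \cphase$, and a continuity argument for the finitely many remaining modes. The only cosmetic difference is that you pass through the quadratic formula and simplify the discriminant before reading off the asymptotics, whereas the paper states the large-$|n|$ expansion directly; the content is identical.
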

\begin{proof}
	We proceed similar to the proof of Lemma \ref{lem:specLSH}.
	Setting $\varepsilon = 0$ we obtain
	\begin{align*}
		\operatorname{det}(\lambda - L_n^\text{con}) = i(c_u + c_v) n - n^2 + (2in + c + c_v)\lambda + \lambda^2,
	\end{align*}
	where we again used that $\cphase = c_u + \curlO(\varepsilon^2)$.
	Thus, for $n \in \Z$ with $\snorm{n}$ sufficiently large, any root of the characteristic polynomial satisfies the expansion
	\begin{align*}
		\lambda = -in \pm \snorm{n}^{1/2} (-1)^{1/4} \sqrt{c-c_u} + \curlO(\snorm{n}^\gamma)
	\end{align*}
	with $\gamma < 1/2$.
	Since $c\vert_{\varepsilon = 0} \neq c_u$ by assumption, it holds that $\Re(\lambda) = \curlO(\sqrt{\snorm{n}})$ for $n \rightarrow \infty$.
	
	Next, we show that $L_n^\text{con}$ has no imaginary eigenvalues for $n \neq 0$.
	Therefore, let $\lambda = i\lambda_i$ with $\lambda_i \in \R$ and solve $\Re(\det(i\lambda_i - L_n^\text{con})) = 0$, which yields that $\lambda_i = -n$.
	Inserting this into the imaginary part, we obtain
	\begin{align*}
		\Im(\det(-in - L_n^\text{con})) = (c_u - c)n,
	\end{align*}
	which does not vanish except for $n = 0$ since $c \neq c_u$ by assumption.
	Hence, $L_n^\text{con}$ has no purely imaginary eigenvalues for $n \neq 0$.
	For $n = 0$, we find the eigenvalues explicitly as $\lambda_0 = 0$ and $\lambda_0 = -c-c_v$, which proves the statement for $\varepsilon = 0$.
	
	Finally, the persistence can be proven similarly to Lemma \ref{lem:specLSH}.
	Therefore, we find that for $\varepsilon > 0$ sufficiently small the number of eigenvalues, which are $\varepsilon$-close to the imaginary axis does not change.
	Furthermore, we can again explicitly calculate the spectrum of $L_0^\text{con}$, which is given by a simple eigenvalue $\lambda_0 = 0$ and a simple eigenvalue $\lambda_0 = -c-c_v$.
	This completes the proof.
\end{proof}

\subsection{Center manifold result}
We now state a center manifold result for the spatial system \eqref{eq:spatDynFinal}.
Recall that the spectrum of the linear part $L$ in \eqref{eq:spatDynFinal} is the union of the eigenvalues of $L_n^\text{SH}$ and $L_n^\text{con}$.
Therefore, by Lemmas \ref{lem:specLSH} and \ref{lem:specLcon} the spectrum of $L$ splits into a finite set of central eigenvalues $\sigma_c$ containing the eigenvalues of $L_n^\text{SH}$ and $L_n^\text{con}$, which are $\varepsilon$-close to the imaginary axis and the remaining set of hyperbolic eigenvalues $\sigma_h$.
In particular, there is an $\varepsilon$-independent spectral gap around the imaginary axis, which does not contain any hyperbolic eigenvalues.
Hence, we can define the spectral projection onto the central eigenspace $\curlP_c$ by the Dunford integral formula
\begin{align*}
	\curlP_c = \dfrac{1}{2\pi i} \int_{\Gamma_c} (\lambda - L)^{-1} \,d\lambda,
\end{align*}
where $\Gamma_c$ is a simple, positively oriented Jordan curve surrounding precisely the central eigenvalues in $\sigma_c$.
Additionally, we define the spectral projection onto the hyperbolic eigenspace by $\curlP_h = I - \curlP_c$.
Finally, the linear operator $L$ is given as the direct sum of infinitely many finite dimensional matrices.
Hence, standard center manifold applies, see e.g.~\cite{haragusIooss11,schneiderUecker17} and the following result holds.

\begin{lemma}\label{lem:centerManifold}
	Let $\ell > 1/2$ and $c\vert_{\varepsilon = 0} \neq c_u$.
	Then, there exists an $\varepsilon_0 > 0$ such that for all $\varepsilon \in (0,\varepsilon_0)$ exists a neighborhood $O_c \subset \curlE_c := \curlP_c \curlE_\ell$ of the origin and a mapping $h = (h_u,h_v) : O_c \rightarrow \curlE_h := \curlP_h \curlE_\ell$ satisfying
	\begin{align}
		h(0) = 0 \text{ and } Dh(0) = 0
		\label{eq:propertyHCM}
	\end{align}
	such that the center manifold
	\begin{align*}
		\curlM_c := \{(U,V) = (U_c,V_c) + h(U_c,V_c) \,:\, (U_c,V_c) \in O_c\}
	\end{align*}
	is invariant and contains all small bounded solutions of \eqref{eq:spatDynFinal}.
	Furthermore, every solution of the reduced system
	\begin{align}
		\partial_\xi \begin{pmatrix}
			U_c \\ V_c
		\end{pmatrix} = L_c \begin{pmatrix}
			U_c \\ V_c
		\end{pmatrix} + \curlP_c \curlN(U_c + h_u(U_c,V_c), V_c + h_v(U_c,V_c)),
		\label{eq:redEquationCM}
	\end{align}
	where $L_c$ is the restriction of $L$ to $\curlE_c$, gives a solution to the full system \eqref{eq:spatDynFinal} via $(U,V) = (U_c,V_c) + h(U_c,V_c)$.
\end{lemma}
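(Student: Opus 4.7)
The plan is to invoke the standard parameter-dependent center manifold theorem for infinite-dimensional dynamical systems with a spectral gap, for instance the version in \cite{haragusIooss11} or in \cite{schneiderUecker17}. The hypotheses to be verified are: (i) the scale of Hilbert spaces $\curlE_{\ell+4,\ell+2} \subset \curlE_{\ell+2} \subset \curlE_\ell$ is suitable, and $L$ is a closed densely defined operator from the first to the last of these; (ii) the spectrum of $L$ splits into a finite central part and a uniformly hyperbolic remainder, with an appropriate resolvent estimate on the imaginary axis at infinity; and (iii) the nonlinearity $\curlN$ is smooth as a map $\curlE_{\ell+4,\ell+2} \to \curlE_{\ell+2}$, vanishing to second order at the origin.

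Condition (i) is immediate since each $H^a$ is by definition a Hilbert space and the embeddings are continuous. The spectral splitting in (ii) is provided directly by Lemmas \ref{lem:specLSH} and \ref{lem:specLcon}: only the Fourier blocks $L_{\pm 1}^{\mathrm{SH}}$ and $L_0^{\mathrm{con}}$ contribute eigenvalues in a small $\varepsilon$-dependent neighborhood of the imaginary axis, and all remaining eigenvalues are $\curlO(1)$-bounded away from $i\R$ uniformly in $n \in \Z$ and in $\varepsilon \in (0,\varepsilon_0)$. Hence the Dunford integral along a small Jordan curve $\Gamma_c$ surrounding precisely these central eigenvalues defines a bounded spectral projection $\curlP_c$ with finite-dimensional range $\curlE_c$, and the closed complement $\curlE_h = (I-\curlP_c)\curlE_\ell$ is invariant under $L$. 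Condition (iii) is a routine convolution estimate: the nonlinearities \eqref{eq:nonlinSH}--\eqref{eq:nonlinCon} are polynomials of degree at most three in the Fourier components of $(U,V)$, and the Fourier multipliers appearing in them (of the form $in$ or $n^2$) cost at most two derivatives in $p$, which is precisely the regularity gain encoded in the choice of the domain space $\curlE_{\ell+4,\ell+2}$ versus the range space $\curlE_{\ell+2}$. Since $H^{\ell+2}$ is a Banach algebra for $\ell > 1/2$, all products land in $\curlE_{\ell+2}$ with continuous dependence, and the quadratic and cubic structure guarantees vanishing to second order at the origin.

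The technical heart of the argument, and the step I expect to be the main obstacle, is the uniform-in-$n$ Mielke-type resolvent bound for $L$ needed in (ii). Exploiting the block-diagonal structure of $L$ in Fourier space, it reduces to estimating $\|(\lambda - L_n^{\mathrm{SH}})^{-1}\|$ and $\|(\lambda - L_n^{\mathrm{con}})^{-1}\|$ on the imaginary axis for large $|\lambda|$, uniformly in $n \in \Z$. From the asymptotic eigenvalue expansion \eqref{eq:asymptoticExpansionSH} (and its conservation-law analogue derived in the proof of Lemma \ref{lem:specLcon}), one locates the spectrum of the $n$-th block near $-in$, shifted off the imaginary axis by amounts of order $|n|^{1/4}$ and $|n|^{1/2}$ respectively. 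The delicate case is $\lambda = i\omega$ with $\omega \approx -n$, where the distance to the spectrum is only $\curlO(|n|^{1/4})$; one has to combine this with the weight $(1+n^2)^\ell$ in the $\curlE_\ell$-norm and the separation between the central modes ($n = 0, \pm 1$) and the hyperbolic ones ($|n| \geq 2$) to extract the resolvent estimate required by the theorem. Once this is in place, the center manifold theorem applies and yields exactly the statement of the lemma: existence of the invariant manifold $\curlM_c$ satisfying the tangency condition \eqref{eq:propertyHCM}, and the reduced system \eqref{eq:redEquationCM} obtained by applying $\curlP_c$ to \eqref{eq:spatDynFinal} and using the invariance of $\curlM_c$.
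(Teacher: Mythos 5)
Your proposal takes essentially the same route as the paper: the paper, too, simply invokes the standard center manifold theorem from \cite{haragusIooss11,schneiderUecker17} after noting that Lemmas \ref{lem:specLSH} and \ref{lem:specLcon} give a finite central spectrum with an $\varepsilon$-independent gap, defining $\curlP_c$ by the Dunford integral, and observing that $L$ is a direct sum of finite-dimensional Fourier blocks so that the resolvent estimate can be read off blockwise. The paper is considerably more terse than your write-up and does not dwell on the resolvent estimate as a ``main obstacle,'' since the block-diagonal structure reduces it to a uniform bound over $6\times 6$ matrices; but the logical skeleton is identical.
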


\section{Reduced equations and heteroclinic connections}\label{sec:redEqAndHetConnections}

In this section, we analyze the dynamics of the spatial system \eqref{eq:spatDynFinal} on the center manifold provided by Lemma \ref{lem:centerManifold}.
Therefore, we assume that $c\vert_{\varepsilon =0} \neq c_u$ throughout this section to ensure that the conditions of the center manifold result Lemma \ref{lem:centerManifold} are satisfied, see also Remark \ref{rem:speedCloseToPhaseVel}.
As discussed in the introduction, the analysis is split up into different parameter regimes, which we recapitulate here for convenience, see also Figure \ref{fig:Scenarios}.
\begin{enumerate}[label=\textbf{Scenario \Roman*:},align=left,ref=\Roman*]
	\item $c\vert_{\varepsilon = 0} \neq -c_v$ and $c\vert_{\varepsilon = 0} \neq 3c_u$ \label{scenario1}
	\item $c\vert_{\varepsilon = 0} \neq -c_v$ and $c = 3c_u + \varepsilon c_0$ with $c_0 \neq 0$ \label{scenario2}
	\item $c = -c_v + \varepsilon^2 c_0$ with $c_0 \neq 0$, $c\vert_{\varepsilon = 0} \neq 3c_u$ and $\gamma_2 = 0$ \label{scenario3}
	\item $c = -c_v + \varepsilon c_0$ with $c_0 \neq 0$, $c\vert_{\varepsilon = 0} \neq 3c_u$ and $\gamma_2 = \varepsilon\gamma_2^0$ \label{scenario4}
	\item $c = -c_v + \varepsilon c_0$ with $c_0 \neq 0$, $c\vert_{\varepsilon = 0} = 3c_u$ and $\gamma_2 = 0$ \label{scenario5}
\end{enumerate}
In every scenario we first derive the reduced equation on the center manifold.
Following, we establish the existence of heteroclinic solutions neglecting higher order terms, either using analytical (Scenarios \ref{scenario1}, \ref{scenario3} and \ref{scenario4}) or numerical methods (Scenarios \ref{scenario2} and \ref{scenario5}).
Afterwards, we show the persistence of the heteroclinic solutions with respect to small higher order terms, which establishes the existence of modulating traveling fronts by reverting the center manifold reduction in Lemma \ref{lem:centerManifold}.

\subsection{Scenario \ref{scenario1}}\label{sec:S1}
We consider the case that the modulating front spreads with speed $c$ such that $c\vert_{\varepsilon=0} \neq 3c_u$ as well as $c\vert_{\varepsilon = 0} \neq -c_v$.
That is, the speed of the front is away from the linear group velocities of both $u$ and $v$.
According to Lemmas \ref{lem:specLSH} and \ref{lem:specLcon}, the central spectrum then contains a simple eigenvalue $\lambda_0^\text{con} = 0$ coming from the conservation law and a pair of simple eigenvalues $\lambda_{c,\pm 1}^\text{SH}$ coming from $L_{\pm 1}^\text{SH}$.
Since $L_1^\text{SH} = \overline{L_{-1}^\text{SH}}$ it is sufficient to calculate an approximation of $\lambda_{c,1}^\text{SH}$, the central eigenvalue of $L_1^\text{SH}$.
The characteristic polynomial of $L_1^\text{SH}$ is given by
\begin{align}
	\det(\lambda - L_1^\text{SH}) = -\lambda^4 + \lambda^3(c_u-4i) + \lambda^2 (3ic_u+4) + \lambda(-3c_u+c) + \varepsilon^2 (\alpha_0 + i\omega_0^\ast),
	\label{eq:charPolS1}
\end{align}
utilizing that the phase velocity $\cphase$ of the periodic traveling wave solution $(\utw,\vtw)$ is given in Lemma \ref{lem:travelingWaves} by $\cphase = c_u + \varepsilon^2 \omega_0^\ast$.
Setting $\lambda = \varepsilon^2 \delta^\ast$ in \eqref{eq:charPolS1} and equating the $\varepsilon^2$-contribution to zero, we find that
\begin{align*}
	\delta^\ast = \dfrac{\alpha_0 + i\omega_0^\ast}{3c_u -c}.
\end{align*}
An application of the implicit function theorem (e.g.~similar to \cite[Appendix A]{hilder20}) then gives the expansion
\begin{align*}
	\lambda_{c,1}^\text{SH} = \varepsilon^2 \dfrac{\alpha + i\omega_0^\ast}{3c_u - c} + \curlO(\varepsilon^3),
\end{align*}
which holds for all $\varepsilon > 0$ sufficiently small.

\subsubsection{Derivation of the reduced equation}

We derive the reduced equation on the center manifold similar to \cite{eckmannWayne91,hilder20}.
Therefore, let $\phi_c^\text{SH}$ be the eigenvector corresponding to $\lambda_{c,1}^\text{SH}$, which is normalized such that its first component is equal to one.
Similarly, let $\phi_c^\text{con}$ be the eigenvector corresponding to $\lambda_c^\text{con} = 0$, which is normalized in the same fashion.
Then, we write
\begin{subequations}
	\begin{align}
		U_c(\xi) &= \varepsilon A(\varepsilon^2 \xi) \phi_c^\text{SH} e^{ip} + c.c., \label{eq:uCoordCMS1}\\
		V_c(\xi) &= \varepsilon^2 B(\varepsilon^2 \xi) \phi_c^\text{con}, \label{eq:vCoordCMS1}
	\end{align}
\end{subequations}
where $A(\tilde{\xi}) \in \C$, $B(\tilde{\xi}) \in \R$, $c.c.$ denotes the complex conjugated terms, and we additionally introduce the notation $X_c = (U_c,V_c)^T$.

\begin{remark}\label{rem:spatialRescaling}
	The spatial rescaling $\tilde{\xi} = \varepsilon^2 \xi$ is motivated by the $\varepsilon$-scaling of the central eigenvalue $\lambda_{c,1}^\text{SH} = \curlO(\varepsilon^2)$ and is used to balance the powers of $\varepsilon$ on the linear level of the reduced equation.
	More precisely, projecting the reduced equation on the center manifold \eqref{eq:redEquationCM} onto the eigenspaces spanned by $\phi_{c}^\text{SH}$ and $\phi_{c}^\text{con}$, respectively, leads to a system of the form
	\begin{align*}
		\varepsilon^3 \partial_{\tilde{\xi}} A &= \varepsilon \lambda_{c,1}^\text{SH} A + N_A(A,B,\varepsilon), \\
		\varepsilon^4 \partial_{\tilde{\xi}} B &= N_B(A,B,\varepsilon),
	\end{align*}
	where $N_A,N_B$ contain the nonlinearities.
	As we show below, the nonlinearities satisfy $N_A = \curlO(\varepsilon^3)$ and $N_B = \curlO(\varepsilon^4)$ and since $\lambda_{c,1}^\text{SH} = \curlO(\varepsilon^2)$, the powers of $\varepsilon$ balance in the leading order terms.
	A similarly motivated rescaling also reappears in the treatment of the other Scenarios \ref{scenario2}--\ref{scenario5}, see Sections \ref{sec:S2}--\ref{sec:S5}.
	However, we point out that it is not always possible to rescale the spatial variable $\xi$ such that all lowest order terms are independent of $\varepsilon$, see in particular the treatment of Scenario \ref{scenario4} in Section \ref{sec:S4}.
\end{remark}

Recall from Lemma \ref{lem:centerManifold} that functions on the center manifold are of the form
\begin{align*}
	U = U_c + h_u(X_c), \\
	V = V_c + h_v(X_c).
\end{align*}
Thus, we proceed by determining an approximation of the functions $h_u$ and $h_v$, respectively.
Following \cite[Corollary 2.12]{haragusIooss11} we have
\begin{align}
	Dh_u \partial_\xi U_c &= L_h^\text{SH} h_u + \curlP_h^\text{SH}(\curlN^\text{SH}(X_c + h(X_c))), \label{eq:equationForHuS1}\\
	Dh_v \partial_\xi V_c &= L_h^\text{con} h_v + \curlP_h^\text{con}(\curlN^\text{con}(X_c + h(X_c))), \label{eq:equationForHvS1}
\end{align}
where $\curlP_h^\text{SH}$ and $\curlP_h^\text{con}$ are the spectral projections onto the hyperbolic spectrum of $L^\text{SH}$ and $L^\text{con}$, respectively.
Furthermore, $L^\text{SH}$ and $L^\text{con}$ are defined by
\begin{align*}
	L^\text{SH} f = \sum_{n\in \Z} L_n^\text{SH} f_n e^{inp} \text{ and } L^\text{con}f = \sum_{n \in \Z} L_n^\text{con} f_n e^{inp},
\end{align*}
for $f \in \curlE_\ell$.
Using that $h_u$ and $h_v$ are at least quadratic in $\varepsilon$ since they depend at least quadratically on $X_c$, see \eqref{eq:propertyHCM}, we find
\begin{align*}
	\curlP_h \curlN^\text{SH}(X_c + h(X_c)) &= (0,0,0,2\varepsilon^2 i A^2 e^{2ip} + c.c.)^T + \curlO(\varepsilon^3), \\
	\curlP_h \curlN^\text{con}(X_c + h(X_c)) &= (0, 4\varepsilon^2 \gamma_1 A^2 e^{2ip} - 2i\gamma_2 \varepsilon^2 A^2 e^{2ip} + c.c.)^T + \curlO(\varepsilon^3).
\end{align*}
Thus, equating the lowest order terms in $\varepsilon$ of \eqref{eq:equationForHuS1} and \eqref{eq:equationForHvS1} to zero we then obtain
\begin{align*}
	0 &= L_h^\text{SH} h_u + (0,0,0,\varepsilon^2 i A^2 e^{2ip} + c.c.)^T, \\
	0 &= L_h^\text{con} h_v + (0,4\varepsilon^2 \gamma_1 A^2 e^{2ip} - 2i\gamma_2 \varepsilon^2 A^2 e^{2ip} + c.c.)^T,
\end{align*}
and therefore,
\begin{subequations}
	\begin{align}
		h_u &= \left(\varepsilon^2 \dfrac{iA^2e^{2ip}}{9+6ic_u} + c.c.,0,0,0\right)^T + \curlO(\varepsilon^3), \label{eq:huS1}\\
		h_v &= \left(\varepsilon^2 A^2 \dfrac{-2\gamma_1 + i\gamma_2}{2-i(c_u+c_v)} e^{2ip} + c.c.,0\right)^T + \curlO(\varepsilon^3). \label{eq:hvS1}
	\end{align}
\end{subequations}
Here, we used that $L_h^\text{SH}$ and $L_h^\text{con}$ have a bounded inverse since the hyperbolic eigenvalues are uniformly bounded away from the imaginary axis.

To obtain an evolution equation for $(A,B)$ on the center manifold, we additionally introduce the projections onto the eigenspaces spanned by $\phi_c^\text{SH}$ and $\phi_c^\text{con}$.
Following \cite{eckmannWayne91,hilder20} these projections are given by
\begin{align}
	\curlP_{\phi_c^\text{SH}}(U_1) = \dfrac{\scalarprod{\psi_c^\text{SH}}{U_1}}{\scalarprod{\psi_c^\text{SH}}{\phi_c^\text{SH}}}, \qquad \curlP_{\phi_c^\text{con}}(V_0) = \dfrac{\scalarprod{\psi_c^\text{con}}{V_0}}{\scalarprod{\psi_c^\text{con}}{\phi_c^\text{con}}},
	\label{eq:spectralProjections}
\end{align}
where $U_1$ is the first Fourier coefficient of $U$, $V_0$ is the zeroth Fourier coefficient of $V$ and $\psi_c^\text{SH}$ and $\psi_c^\text{con}$ are the eigenvectors of the adjoint matrices $(L_1^\text{SH})^\ast$ and $(L_0^\text{con})^\ast$ corresponding to the eigenvalues $\overline{\lambda_c^\text{SH}}$ and $\overline{\lambda_c^\text{con}}$, respectively.
Additionally, $\psi_c^\text{SH}$ and $\psi_c^\text{con}$ are normalized such that their last component is equal to one.
With this notation, we can write the reduced equation on the center manifold \eqref{eq:redEquationCM} equivalently as
\begin{align*}
	\varepsilon^3 \partial_{\tilde{\xi}} A &= \varepsilon\lambda_c^\text{SH} A + \curlP_{\phi_c^\text{SH}}(\curlN_1^\text{SH}(X_c + h(X_c))), \\
	\varepsilon^4 \partial_{\tilde{\xi}} B &= \curlP_{\phi_c^\text{con}}(\curlN_0^\text{con}(X_c + h(X_c))),
\end{align*}
where $\tilde{\xi} = \varepsilon^2\xi$ and we used the ansatz \eqref{eq:uCoordCMS1}--\eqref{eq:vCoordCMS1} and exploited that $\lambda_c^\text{con} = 0$.
Inserting the ansatz \eqref{eq:uCoordCMS1}--\eqref{eq:vCoordCMS1} and the expansion for $(h_u,h_v)$ from \eqref{eq:huS1}--\eqref{eq:hvS1} into the nonlinearities \eqref{eq:nonlinSH}--\eqref{eq:nonlinCon} then yields
\begin{align*}
	\curlP_{\phi_c^\text{SH}}(\curlN_1^\text{SH}(X_c + h(X_c))) &= \dfrac{\varepsilon^3}{\scalarprod{\psi_c^\text{SH}}{\phi_c^\text{SH}}}\left[A\left(B + \dfrac{-2\gamma_1 + i\gamma_2}{2-i(c_u+c_v)} \snorm{A}^2\right) - \dfrac{1}{9+6ic_u}A\snorm{A}^2 - 3 A\snorm{A}^2\right], \\
	&\qquad\qquad +\curlO(\varepsilon^4), \\
	\curlP_{\phi_c^\text{con}}(\curlN_0^\text{con}(X_c + h(X_c))) &= \dfrac{2\gamma_2\varepsilon^4}{\scalarprod{\psi_c^\text{con}}{\phi_c^\text{con}}} \partial_{\tilde{\xi}} \snorm{A}^2 + \curlO(\varepsilon^5).
\end{align*}
To obtain the latter expression, we used that
\begin{align*}
	\sum_{k+j = 0} U_{k0}U_{j1} = \dfrac{1}{2}\partial_\xi \sum_{k+j=0} U_{k0}U_{j0},
\end{align*}
since $U_{j1} = \partial_\xi U_{j0}$.
Finally, we calculate using \cite[Appendix C]{eckmannWayne91} that
\begin{align*}
	\scalarprod{\psi_c^\text{SH}}{\phi_c^\text{SH}} &= -\partial_\lambda \det(\lambda - L_1^\text{SH})\vert_{\lambda = \lambda_{c,1}^\text{SH}} = 3c_u - c + \curlO(\varepsilon^2), \\
	\scalarprod{\psi_c^\text{con}}{\phi_c^\text{con}} &= -(c+c_v).
\end{align*}
Hence, to lowest order in $\varepsilon$, the dynamical behavior on the center manifold is determined by
\begin{align*}
	\partial_{\tilde{\xi}} A &= \dfrac{\alpha_0 + i\omega_0^\ast}{3c_u - c} A + \dfrac{1}{3c_u - c}\left(AB + \left(-3-\dfrac{1}{9+6ic_u} + \dfrac{-2\gamma_1+i\gamma_2}{2-i(c_u+c_v)}\right) A\snorm{A}^2\right), \\
	\partial_{\tilde{\xi}} B &= \dfrac{2\gamma_2}{c_v+c} \partial_{\tilde{\xi}}(\snorm{A}^2).
\end{align*}
Integrating the second equation with respect to $\tilde{\xi}$ we find that $B$ is to lowest order determined by $\snorm{A}^2$.
Therefore, we arrive at
\begin{align}
	\partial_{\tilde{\xi}} A &= \dfrac{\alpha_0 + i \omega_0^\ast}{3c_u - c} A + \dfrac{A\snorm{A}^2}{3c_u -c} \left(\dfrac{2\gamma_2}{c_v+c} - 3 - \dfrac{1}{9+6ic_u} + \dfrac{-2\gamma_1 + i\gamma_2}{2-i(c_u+c_v)}\right).
	\label{eq:redEqS1}
\end{align}

\begin{remark}\label{rem:nonlinDispersionMotivationS1}
	We point out that the lowest order connection of $B$ and $A$ is only present due to the nonlinear dispersion term $\gamma_2 \partial_x(u^2)$ in the conservation law \eqref{eq:ConDis}.
	In particular, if the nonlinear dispersion term in \eqref{eq:ConDis} is removed (i.e.~$\gamma_2 = 0$), $B$ is constant to lowest order.
	Mathematically, this motivates the inclusion of the nonlinear dispersion in the conservation law \eqref{eq:ConDis} to enrich the dynamics of the system.
	Furthermore, a similar motivation for the presence of the nonlinear dispersion can be found in Scenario \ref{scenario4} treated in Section \ref{sec:S4}.
\end{remark}

\subsubsection{Existence of heteroclinic orbits}

We now analyze the dynamics of the reduced equation \eqref{eq:redEqS1}.
As in the construction of periodic solutions in Lemma \ref{lem:travelingWaves} we first set $\gamma_1 = \gamma_2 = 0$.
Thus, \eqref{eq:redEqS1} simplifies to
\begin{align*}
	\partial_{\tilde{\xi}} A = \dfrac{\alpha_0 + i \omega_0^\ast}{3c_u - c} A - \dfrac{A\snorm{A}^2}{3c_u - c}\left(3 + \dfrac{1}{9+6ic_u}\right).
\end{align*}
Following the proof of Lemma \ref{lem:travelingWaves}, this equation has a circle of non-trivial fixed points $A^\ast$ with
\begin{align*}
	\snorm{A^\ast}^2 = \dfrac{(9+4c_u^2)\alpha_0}{4(7+3c_u^2)}.
\end{align*}
We now show the existence of heterclinic orbits connecting the circle of non-trivial fixed points to the origin.
To do this, we use the invariance with respect to $A \mapsto Ae^{i\phi}$ and write $A$ in polar coordinates, i.e.~$A = r_A e^{i\phi_A}$, which yields
\begin{subequations}
	\begin{align}
		\partial_{\tilde{\xi}} r_A &= \dfrac{\alpha_0}{3c_u - c} r_A - \dfrac{r_A^3}{3c_u -c}\left(3 + \dfrac{1}{9+4c_u^2}\right), \label{eq:redEqS1PolCoordR} \\
		\partial_{\tilde{\xi}} \phi_A &= \dfrac{\omega_0^\ast}{3c_u - c} + \dfrac{r_A^2}{3c_u -c}\dfrac{2c_u}{27+12c_u^2}. \label{eq:redEqS1PolCoordPhi}
	\end{align}
\end{subequations}
Since \eqref{eq:redEqS1PolCoordR} is independent of the angle $\phi_A$, it is sufficient to construct heteroclinic connections in the $r_A$-equation.
Linearizing \eqref{eq:redEqS1PolCoordR} about $r_A = 0$ yields
\begin{align*}
	L_0 = \dfrac{\alpha_0}{3c_u -c}
\end{align*}
and thus, the origin is stable for $c > 3c_u$ and unstable for $c < 3c_u$.
Similarly, the linearization about $r_A = A^\ast$ is given by
\begin{align*}
	L_{A^\ast} = -\dfrac{4\alpha_0}{3c_u - c}.
\end{align*}
Hence $r_A = A^\ast$ is unstable for $c > 3c_u$ and stable for $c < 3c_u$.
Since \eqref{eq:redEqS1PolCoordR} does not possess a stationary state in $(0,A^\ast)$, there exists a heteroclinic solution $r_{c,\text{het}}$ of \eqref{eq:redEqS1PolCoordR} such that the following holds.
\begin{itemize}
	\item If $c > 3c_u$, then
		\begin{align*}
			\lim_{\tilde{\xi} \rightarrow -\infty} r_{c,\text{het}}(\tilde{\xi}) = A^\ast \text{ and } \lim_{\tilde{\xi} \rightarrow +\infty} r_{c,\text{het}}(\tilde{\xi}) = 0.
		\end{align*}
	\item If $c < 3c_u$, then
		\begin{align*}
			\lim_{\tilde{\xi} \rightarrow -\infty} r_{c,\text{het}}(\tilde{\xi}) = 0 \text{ and } \lim_{\tilde{\xi} \rightarrow +\infty} r_{c,\text{het}}(\tilde{\xi}) = A^\ast.
		\end{align*}
\end{itemize}

\begin{remark}
	We highlight that the appearance of $3c_u$ as the switching point is not a coincidence since we expect that the group velocity of the periodic traveling waves, see Lemma \ref{lem:travelingWaves}, is close to $3c_u$ for $\varepsilon > 0$ sufficiently small.
	Here, we recall that the instability leading to the bifurcation of the periodic traveling waves is driven by the unstable spectrum in the dispersive Swift-Hohenberg equation \eqref{eq:SHDis} at Fourier wave numbers close to $k_c = \pm 1$.
	This leads to the above expectation that the nonlinear group velocity is close to the linear group velocity of the dispersive Swift-Hohenberg equation $c_\text{lin,phase,u}(k_c) = 3c_u$.
\end{remark}

\subsubsection{Persistence of heteroclinic orbits and existence of modulating fronts}

It remains to show that the heteroclinic connection $r_{c,\text{het}}$ persists if higher order pertubrations in $(\varepsilon, \gamma_1,\gamma_2)$ are introduced to the system.
Using the conservation law structure of \eqref{eq:ConDis} the full reduced equation for $B$ on the center manifold, including higher order terms, is of the form
\begin{align*}
	\partial_{\tilde{\xi}} B &= \dfrac{2\gamma_2}{c+c_v} \partial_{\tilde{\xi}} \snorm{A}^2 + \partial_{\tilde{\xi}} \tilde{g}(\varepsilon, \gamma_1, \gamma_2, A,B)
\end{align*}
with $\tilde{g} \in \curlO(\varepsilon)$.
An application of the implicit function theorem thus yields
\begin{align*}
	B = \dfrac{2\gamma_2}{c+c_v} \snorm{A}^2 + \curlO(\varepsilon(1+\snorm{\gamma_1}+\snorm{\gamma_2})\snorm{A}^2).
\end{align*}
Hence, $B$ is still determined by $\snorm{A}^2$ when higher order terms are included.
Next, using the translation invariance of \eqref{eq:SHDis}--\eqref{eq:ConDis} as in the proof of Lemma \ref{lem:travelingWaves}, we obtain the persistence of the non-trivial stationary state $r_A = A^\ast$ for the reduced dynamics.
Then, continuous dependence of $\lambda_c^\text{SH}$, $\phi_c^\text{SH}$ and $\psi_c^\text{SH}$ on $(\varepsilon,\gamma_1,\gamma_2)$ proves the persistence of $r_{c,\text{het}}$ under higher order perturbations on the center manifold.
Finally reversing the center manifold reduction in Lemma \ref{lem:centerManifold}, we obtain the existence of modulating traveling fronts as stated in the following theorem.

\begin{theorem}\label{thm:S1}
	Let $c\vert_{\varepsilon = 0} \notin \{c_u,-c_v,3c_u\}$.
	Then, there exist $\varepsilon_0 > 0$ and $\gamma^\ast > 0$ such that for all $\varepsilon \in (0,\varepsilon_0)$ and $\gamma_1,\gamma_2 \in (-\gamma^\ast,\gamma^\ast)$ the system \eqref{eq:SHDis}--\eqref{eq:ConDis} has the following solutions.
	\begin{enumerate}
		\item If $c > 3c_u$ there exists a family of modulating traveling front solutions $(u_\text{front},v_\text{front}) = (u_\text{front},v_\text{front})(\xi,p)$ satisfying \eqref{eq:modfrontBC}, which are of the form
		\begin{align*}
			u_\text{front}(\xi,p) &= \varepsilon 2\Re(A_\text{het}(\varepsilon^2\xi)) \cos(p) + \curlO(\varepsilon^2), \\
			v_\text{front}(\xi,p) &= \varepsilon^2 \left(\dfrac{2\gamma_2}{c+c_v} \snorm{A_\text{het}(\varepsilon\xi)}^2 + 2 \Re\left(\dfrac{-2\gamma_1 + i\gamma_2}{2 - i(c_u+c_v)}\Ahet(\varepsilon^2 \xi)^2\right) \cos(2p)\right) + \curlO(\varepsilon^3),
		\end{align*}
		where $A_\text{het}$ is a heteroclinic orbit of \eqref{eq:redEqS1} with 
		\begin{align*}
			\lim_{\tilde{\xi} \rightarrow -\infty} A_\text{het}(\tilde{\xi}) = A^\ast \text{ and } \lim_{\tilde{\xi} \rightarrow +\infty} A_\text{het}(\tilde{\xi}) = 0.
		\end{align*}
		\item If $c < 3c_u$ there exists a family of ``reversed'' modulating traveling front solutions $(u_\text{front},v_\text{front}) = (u_\text{front},v_\text{front})(\xi,p)$ satisfying \eqref{eq:revModfrontBC}, which are of the form
		\begin{align*}
			u_\text{front}(\xi,p) &= \varepsilon 2\Re(\check{A}_\text{het}(\varepsilon^2\xi)) \cos(p) + \curlO(\varepsilon^2), \\
			v_\text{front}(\xi,p) &= \varepsilon^2 \left(\dfrac{2\gamma_2}{c+c_v} \snorm{\check{A}_\text{het}(\varepsilon\xi)}^2 + 2 \Re\left(\dfrac{-2\gamma_1 + i\gamma_2}{2 - i(c_u+c_v)}\check{A}_\text{het}(\varepsilon^2 \xi)^2\right) \cos(2p)\right) + \curlO(\varepsilon^3),
		\end{align*}
		where $\check{A}_\text{het}$ is a heteroclinic orbit of \eqref{eq:redEqS1} with 
		\begin{align*}
			\lim_{\tilde{\xi} \rightarrow -\infty} \check{A}_\text{het}(\tilde{\xi}) = 0 \text{ and } \lim_{\tilde{\xi} \rightarrow +\infty} \check{A}_\text{het}(\tilde{\xi}) = A^\ast.
		\end{align*}	
	\end{enumerate}
	Here $\xi = x-ct$ and $p = x-\cphase t$ with the phase velocity $\cphase$ given in Lemma \ref{lem:travelingWaves}.
\end{theorem}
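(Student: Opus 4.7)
The plan is to combine the center manifold reduction of Lemma \ref{lem:centerManifold} with the explicit analysis of the reduced equation \eqref{eq:redEqS1} that was derived above, and then show that the heteroclinic orbits constructed for the leading-order reduced system persist when the higher-order $\curlO(\varepsilon)$ and $\curlO(\gamma_j)$ corrections are reinstated. The modulating front in the original variables is then recovered from the center manifold parametrization $(U,V) = (U_c,V_c) + h(U_c,V_c)$ together with the ansatz \eqref{eq:uCoordCMS1}--\eqref{eq:vCoordCMS1}.

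First, under the hypothesis $c\vert_{\varepsilon=0} \notin \{c_u,-c_v,3c_u\}$, Lemmas \ref{lem:specLSH} and \ref{lem:specLcon} guarantee a finite three-dimensional center subspace spanned by $\phi_c^\text{SH}e^{ip}$, its complex conjugate and $\phi_c^\text{con}$, with an $\varepsilon$-independent spectral gap to the hyperbolic part. Applying Lemma \ref{lem:centerManifold} yields the reduced system \eqref{eq:redEqS1}, after integrating the conserved $B$-equation once in $\tilde{\xi}$ and inserting the resulting algebraic relation $B = \frac{2\gamma_2}{c+c_v}\snorm{A}^2 + \curlO(\varepsilon(1+\snorm{\gamma_1}+\snorm{\gamma_2})\snorm{A}^2)$ obtained via the implicit function theorem from the conservation law structure. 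Setting $\gamma_1 = \gamma_2 = 0$ and switching to polar coordinates $A = r_A e^{i\phi_A}$ decouples \eqref{eq:redEqS1} into the scalar equation \eqref{eq:redEqS1PolCoordR} for $r_A$ and a driven phase equation \eqref{eq:redEqS1PolCoordPhi}, thanks to the $S^1$ gauge symmetry $A \mapsto Ae^{i\phi}$.

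Next, I would analyze \eqref{eq:redEqS1PolCoordR} as a scalar ODE on $[0,\infty)$. The two fixed points $0$ and $A^\ast$ are hyperbolic with linearizations $L_0 = \alpha_0/(3c_u-c)$ and $L_{A^\ast} = -4\alpha_0/(3c_u-c)$ of opposite sign, so a monotone heteroclinic trajectory $r_{c,\text{het}}$ between them exists and is unique up to translation; its direction flips at $c = 3c_u$. Combined with \eqref{eq:redEqS1PolCoordPhi} this gives a full heteroclinic orbit $A_\text{het} = r_{c,\text{het}} e^{i\phi_A}$ of the unperturbed reduced equation, which connects the circle of non-trivial fixed points from Lemma \ref{lem:travelingWaves} to the origin.

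The main obstacle is persistence under the small perturbations caused by reinstating $(\varepsilon,\gamma_1,\gamma_2)$, since these terms generically break the exact $S^1$ symmetry that allowed the reduction to a scalar ODE. I would handle this in two steps. The non-trivial endpoint persists by a direct application of the implicit function theorem exactly as in the proof of Lemma \ref{lem:travelingWaves}, using the translation invariance of \eqref{eq:SHDis}--\eqref{eq:ConDis} to fix the phase and noting that the relevant Jacobian is invertible because $\alpha_0 > 0$. The origin is already a fixed point, and its linearization depends continuously on the parameters, so it remains hyperbolic with a one-dimensional stable (respectively unstable) manifold when $c > 3c_u$ (respectively $c < 3c_u$). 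Since the unperturbed heteroclinic is transverse in the real one-dimensional phase space $\{r_A \ge 0\}$ and intersects the stable/unstable manifolds of the endpoints non-degenerately, a standard transversality/continuous dependence argument (e.g.\ tracking the unstable manifold of one endpoint and intersecting it with the stable manifold of the other) yields a persistent heteroclinic $A_\text{het}$ of the full reduced equation for all sufficiently small $(\varepsilon,\gamma_1,\gamma_2)$. Inserting this orbit into \eqref{eq:uCoordCMS1}--\eqref{eq:vCoordCMS1} and using the expansions \eqref{eq:huS1}--\eqref{eq:hvS1} together with the slaving relation for $B$ produces the explicit form of $(u_\text{front},v_\text{front})$ claimed in the theorem, and the boundary conditions \eqref{eq:modfrontBC} or \eqref{eq:revModfrontBC} follow from the asymptotic behavior of $A_\text{het}$.
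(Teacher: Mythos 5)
Your overall plan matches the paper's: center manifold reduction to the amplitude equation \eqref{eq:redEqS1}, polar decomposition to the scalar ODE \eqref{eq:redEqS1PolCoordR}, existence of a monotone heteroclinic in the unperturbed scalar equation, persistence of the endpoints via the implicit function theorem and of the connection via continuous dependence, and reversal of Lemma \ref{lem:centerManifold}. However, one assertion in your persistence step is incorrect and, if taken at face value, would undermine the very reduction you rely on: the higher-order terms in $(\varepsilon,\gamma_1,\gamma_2)$ do \emph{not} ``generically break the exact $S^1$ symmetry''. The gauge invariance $A \mapsto Ae^{i\phi}$ is the exact image, on the center manifold, of the spatial translation invariance of \eqref{eq:SHDis}--\eqref{eq:ConDis}, which holds identically for all parameter values; the reduction function $h$ can be chosen equivariant, so the reduced vector field (including every higher-order correction) retains the $S^1$ symmetry. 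This is precisely what makes the polar-coordinate passage to a closed, autonomous, parameter-continuous scalar equation for $r_A$ valid even after reinstating $(\varepsilon,\gamma_1,\gamma_2)$, and it is the mechanism by which the persistence of the endpoint $A^\ast$ (through the implicit function theorem, fixing the phase) and of $r_{c,\text{het}}$ (through continuous dependence in the one-dimensional $r_A$-phase space) go through in the paper.

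Your subsequent two-step argument in fact tacitly assumes the symmetry persists: you use the translation invariance to fix the phase at the nontrivial endpoint, and you carry out the transversality/continuation argument in the ``real one-dimensional phase space $\{r_A \ge 0\}$'', which only exists as a closed reduced system if the $S^1$ symmetry is intact. So the argument is sound, but the statement about symmetry breaking should be reversed; if the symmetry truly were broken, the persistence of a heteroclinic between hyperbolic points in the full two-dimensional $A$-plane would need a separate, nontrivial transversality argument (in two dimensions, a saddle–node heteroclinic is not automatically robust). A smaller imprecision: your remark that the origin has a ``one-dimensional stable (resp.\ unstable) manifold'' only makes sense after the radial reduction; in the complex $A$-plane the origin is a two-dimensional sink (resp.\ source), since the pair of central eigenvalues are complex conjugates with common real part $\alpha_0/(3c_u-c)$.
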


\subsection{Scenario \ref{scenario2}}\label{sec:S2}
In the second scenario, the spreading speed of the modulating front $c$ is assumed to be close to the linear group velocity of the bifurcating traveling wave, that is, $c = 3c_u + \varepsilon c_0$ with $c_0 \neq 0$.
Using Lemma \ref{lem:specLSH} the central spectrum of $L^\text{SH}$ therefore consists of the two central eigenvalues of $L_{\pm 1}^\text{SH}$, respectively.
Additionally, we assume that $c\vert_{\varepsilon = 0} \neq -c_v$ or equivalently $3c_u \neq -c_v$ and therefore, the conservation law part contributes only a simple zero eigenvalue to the central spectrum, see Lemma \ref{lem:specLcon}.

We now calculate the central eigenvalues of $L_1^\text{SH}$, which are denoted by $\lambda_{c,\pm}^\text{SH}$.
Proceeding similarly to Section \ref{sec:S1}, the characteristic polynomial of $L_1^\text{SH}$ is given by 
\begin{align*}
	\det(\lambda - L_1^\text{SH}) = -\lambda^4+\lambda^3(c_u-4i) + \lambda(3ic_u+4) + \lambda\varepsilon c_0 + \varepsilon^2(\alpha_0 + i\omega_0^\ast),
\end{align*}
where we used that $\cphase = c_u + \varepsilon^2 \omega_0^\ast$ and $c = 3c_u + \varepsilon c_0$.
Inserting the ansatz $\lambda = \varepsilon\delta$ and equating the $\varepsilon^2$-terms to zero then yields
\begin{align*}
	\delta_\pm = \dfrac{-c_0 \pm \sqrt{c_0^2-4(3ic_u + 4)(\alpha_0 + i\omega_0^\ast)}}{8+6ic_u} =: -\dfrac{c_0}{8+6ic_u} \pm \dfrac{\Delta}{8+6ic_u}.
\end{align*}
Applying the implicit function theorem then gives that
\begin{align*}
	\lambda_{c,\pm}^\text{SH} = \varepsilon \delta_\pm + \curlO(\varepsilon^2) = -\dfrac{c_0}{8+6ic_u} \pm \dfrac{\Delta}{8+6ic_u} + \curlO(\varepsilon^2).
\end{align*}
As in the first scenario it is sufficient to consider the eigenvalues of $L_1^\text{SH}$ since the central eigenvalues of $L_{-1}^\text{SH}$ are given by $\overline{\lambda_{c,\pm}^\text{SH}}$.

\subsubsection{Derivation of the reduced system}

After calculating the central eigenvalues, we proceed to derive the reduced equation on the center manifold.
Adapting the approach of the previous Section \ref{sec:S1}, we introduce the coordinates
\begin{subequations}
	\begin{align}
		U_c(\xi) &= \left(\varepsilon A_+(\varepsilon \xi) \phi_{c,+}^\text{SH} + \varepsilon A_-(\varepsilon\xi) \phi_{c,-}^\text{SH}\right) e^{ip} + c.c., \label{eq:uCoordCMS2}\\
		V_c(\xi) &= \varepsilon^2 B(\varepsilon \xi) \phi_c^\text{con}, \label{eq:vCoordCMS2}
	\end{align}
\end{subequations}
where $\phi_{c,\pm}^\text{SH}$ are the eigenvectors of $L_1^\text{SH}$ corresponding to the central eigenvalues $\lambda_{c,\pm}^\text{SH}$ and are normalized such that their first component is equal to one.
Similarly, $\phi_c^\text{con}$ is the eigenvector of $L_0^\text{con}$ corresponding to its zero eigenvalue.
We point out that the spatial rescaling $\hat{\xi} = \varepsilon \xi$ done in \eqref{eq:uCoordCMS2}--\eqref{eq:vCoordCMS2} is different from the rescaling in \eqref{eq:uCoordCMS1}--\eqref{eq:vCoordCMS1} since the central eigenvalues $\lambda_{c,\pm}^\text{SH}$ are of order $\varepsilon$ instead of $\varepsilon^2$.
Hence, to balance the $\varepsilon$-powers on the linear level of the reduced equation a different rescaling is necessary, see also Remark \ref{rem:spatialRescaling}.

Next, we recall that on the center manifold holds that
\begin{align*}
	U(\xi) &= U_c(\xi) + h_u(A_+,A_-,B), \\
	V(\xi) &= V_c(\xi) + h_v(A_+,A_-,B),
\end{align*}
with $h = (h_u,h_v)$ from Lemma \ref{lem:centerManifold}.
Redoing the computations done in the previous Section \ref{sec:S1} we find that
\begin{align*}
	h_u &= \left(\varepsilon^2 \dfrac{i(A_+ + A_-)}{9+6ic_u} e^{2ip} + c.c.,0,0,0\right)^T + \curlO(\varepsilon^3), \\
	h_v &= \left(\varepsilon^2 \dfrac{-2\gamma_1 + i\gamma_2}{2-i(c_u+c_v)}(A_+ + A_-) e^{2ip} + c.c., 0\right)^T + \curlO(\varepsilon^3).
\end{align*}
Additionally, let again $\psi_{c,\pm}^\text{SH}$ be the eigenvectors of the adjoint problem normalized such that their last component is one.
Then, it hold that
\begin{align*}
	\scalarprod{\psi_{c,\pm}^\text{SH}}{\phi_{c,\pm}^\text{SH}} = -\partial_\lambda \det(\lambda - L_1^\text{SH})\vert_{\lambda = \lambda_{c,\pm}^\text{SH}} = \mp \varepsilon \Delta + \curlO(\varepsilon^2).
\end{align*}
Proceeding further along the lines of Section \ref{sec:S1} and utilizing that the central spectrum of $L^\text{con}$ does not change, we find that $B$ is determined by $\snorm{A_+ + A_-}^2$ to lowest order in $\varepsilon$.
That is, neglecting higher order terms we obtain
\begin{align*}
	B = \dfrac{2\gamma_1}{c+c_v}\snorm{A_+ + A_-}^2.
\end{align*}
The equations for $A_+$, $A_-$ can then be obtained by projecting onto the eigenspaces spanned by $\phi_{c,\pm}^\text{SH}$ and are given by
\begin{align*}
	\partial_{\hat{\xi}} A_+ &= \delta_+ A_+ + a_\text{cub} (A_+ + A_-) \snorm{A_+ + A_-}^2, \\
	\partial_{\hat{\xi}} A_- &= \delta_- A_- - a_\text{cub} (A_+ + A_-) \snorm{A_+ + A_-}^2,
\end{align*}
up to higher order terms, with $\hat{\xi} = \varepsilon \xi$ and 
\begin{align*}
	a_\text{cub} = -\dfrac{1}{\Delta}\left(\dfrac{2\gamma_1}{c+c_v} - 3 - \dfrac{1}{9+6ic_u} + \dfrac{-2\gamma_1 + i\gamma_2}{2-i(c_u+c_v)}\right) =: -\dfrac{\tilde{a}_\text{cub}}{\Delta}.
\end{align*}
Finally, similar to \cite{hilder20} we substitute
\begin{align}
	A := A_+ + A_- \text{ and } \tilde{A} := -\dfrac{c_0}{8+6ic_u} (A_+ + A_-) + \dfrac{\Delta}{8+6ic_u} (A_+ - A_-),
	\label{eq:substitutionS2}
\end{align}
and arrive at the final equation on the center manifold
\begin{subequations}
	\begin{align}
		\partial_{\hat{\xi}} A &= \tilde{A}, \label{eq:redEqS2_1}\\
		\partial_{\hat{\xi}} \tilde{A} &= - \dfrac{2c_0}{8+6ic_u} \tilde{A} + \dfrac{\Delta^2 - c_0^2}{(8+6ic_u)^2} A - \dfrac{2 \tilde{a}_\text{cub}}{8+6ic_u} A \snorm{A}^2. \label{eq:redEqS2_2}
	\end{align}
\end{subequations}

\subsubsection{Numerical analysis of the reduced system}

We study the dynamics of the reduced system \eqref{eq:redEqS2_1}--\eqref{eq:redEqS2_2}.
Again, we first set $\gamma_1 = \gamma_2 = 0$ and study the persistence of the dynamical behavior afterwards.
Additionally, the analysis can be restricted to the case $c_0 \geq 0$ since the system \eqref{eq:redEqS2_1}--\eqref{eq:redEqS2_2} is reversible with respect to $R : (A, \tilde{A},c_0) \mapsto (A, -\tilde{A},-c_0)$.
That is, if $\hat{\xi} \mapsto (A(\hat{\xi}), \tilde{A}(\hat{\xi}))$ is a solution of \eqref{eq:redEqS2_1}--\eqref{eq:redEqS2_2} with $c_0$, then $\hat{\xi} \mapsto (A(-\hat{\xi}),-\tilde{A}(-\hat{\xi}))$ solves \eqref{eq:redEqS2_1}--\eqref{eq:redEqS2_2} with $-c_0$.

We are interested in the existence of heteroclinic orbits connecting the fixed point $(A^\ast,0)$ to the origin, where $A^\ast$ is given in Lemma \ref{lem:travelingWaves}.
Hence, we consider the stability of these fixed points.
Since any solution to \eqref{eq:redEqS2_1}--\eqref{eq:redEqS2_2} is in general complex valued, we split the solution into their real and imaginary part and write $A = A_r + iA_i$ and $\tilde{A} = \tilde{A}_r + i \tilde{A}_i$, where the subscripts $r$ and $i$ denote the real and imaginary parts, respectively.
Inserting this into \eqref{eq:redEqS2_1}--\eqref{eq:redEqS2_2} then yields the system
\begin{subequations}
	\begin{align}
		\partial_{\hat{\xi}} A_r &= \tilde{A}_r, \label{eq:redEqS2_Complex_1} \\
		\partial_{\hat{\xi}} A_i &= \tilde{A}_i, \label{eq:redEqS2_Complex_2}\\
		\partial_{\hat{\xi}} \tilde{A}_r &= a_r A_r - a_i A_i + b_r \tilde{A}_r - b_i \tilde{A}_i + c_r A_r(A_r^2 + A_i^2) - c_i A_i(A_r^2 + A_i^2), \label{eq:redEqS2_Complex_3}\\
		\partial_{\hat{\xi}} \tilde{A}_i &= a_r A_i + a_i A_r + b_r \tilde{A}_i + b_i \tilde{A}_r + c_r A_i(A_r^2 + A_i^2) + c_i A_r(A_r^2 + A_i^2), \label{eq:redEqS2_Complex_4}
	\end{align}
\end{subequations}
where the coefficients are given by
\begin{align*}
	a := \dfrac{\Delta^2 - c_0^2}{(8+6ic_u)^2}, \quad b:= -\dfrac{2c_0}{8+6ic_u}, \quad c := -\dfrac{2\tilde{a}_\text{cub}}{8+6ic_u}.
\end{align*}

The linearlization about the invading state $(A^\ast,0,0,0)$ is given by
\begin{align*}
	L_\text{invading} = \begin{pmatrix}
		0 & 0 & 1 & 0 \\
		0 & 0 & 0 & 1 \\
		a_r + 3c_r (A^\ast)^2 & -a_i - c_i(A^\ast)^2 & b_r & -b_i \\
		a_i + 3c_i (A^\ast)^2 & a_r + c_r (A^\ast)^2 & b_i & b_r
	\end{pmatrix} = \begin{pmatrix}
		0 & 0 & 1 & 0 \\
		0 & 0 & 0 & 1 \\
		2c_r (A^\ast)^2 & 0 & b_r & -b_i \\
		2c_i (A^\ast)^2 & 0 & b_i & b_r
	\end{pmatrix},
\end{align*}
where we used $A^\ast \neq 0$ and that $(A^\ast,0,0,0)$ is a fixed point of \eqref{eq:redEqS2_Complex_1}--\eqref{eq:redEqS2_Complex_4}.
Therefore, $L_\text{invading}$ has one zero eigenvalue.
Furthermore, numerical computations show that $L_\text{invading}$ additionally has one unstable and two stable eigenvalues, independently of the value of $c_0 > 0$, see Figure \ref{fig:invadingSpectrum}.

\begin{figure}
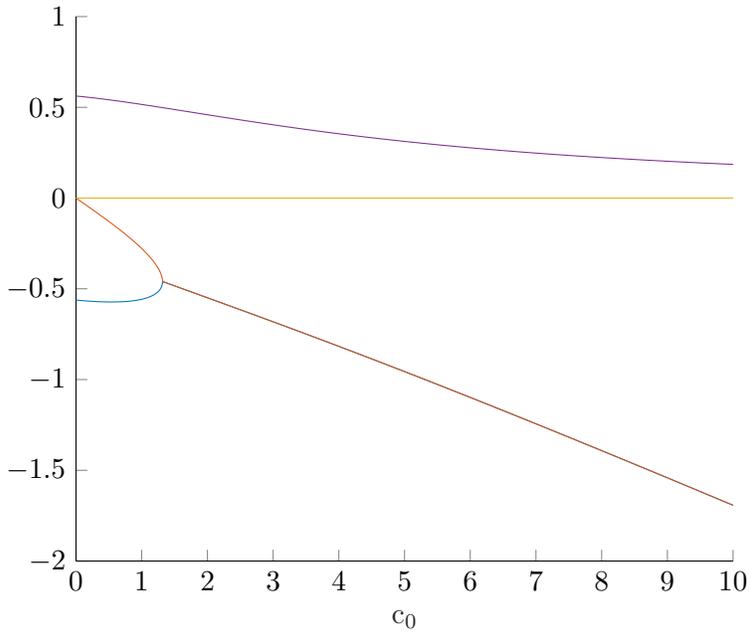

	\centering
	\include{Images/invadingSpectrum}
	\caption{Real part of the spectrum of $L_\text{invading}$ for $\alpha_0 = c_u = 1$ and $0 \leq c_0 \leq 10$.}
	\label{fig:invadingSpectrum}
\end{figure}

Next, we study the linearization about the origin, denoted by $L_0$.
Numerically, we find the following behavior.
There exists a $c_0^\ast > 0$ such that for all $c_0 > c_0^\ast$, the origin is a stable hyperbolic fixed point, that is, all eigenvalues of $L_0$ have negative real part.
At $c_0 = c_0^\ast$ a pair of complex conjugated eigenvalues crosses the imaginary axis, leading to a Hopf bifurcation.
Hence, periodic solutions bifurcate from the origin.
We study this bifurcation further using AUTO \cite{auto07p} and obtain the existence of a $c_0^{\ast\ast} \in (0,c_0^\ast)$ such that the periodic solutions are spectrally stable for $c_0 \in (c_0^{\ast\ast},c_0^\ast)$.
That is, with the exception of one simple Floquet multiplier (see e.g.~\cite{schneiderUecker17}) with absolute value one, all Floquet multipliers of the periodic solutions have absolute value less than one if $c_0 \in (c_0^{\ast\ast},c_0^\ast)$, see Figure \ref{fig:floquetMultipliers}.
Finally, at $c_0 = c_0^{\ast\ast}$ the family of periodic solutions destabilizes and undergoes a secondary Torus bifurcation.
The findings are summarized in the bifurcation diagram Figure \ref{fig:bifurcationDiagram}.
In the depicted case $\alpha_0 = c_u = 1$ the bifurcation points for $c_0$ are approximately given by $c_0^\ast \approx 1.55172$ and $c_0^{\ast\ast} \approx 1.01500$.

\begin{figure}
	\begin{minipage}{0.47\textwidth}
		\centering
		\include{Images/FloquetMultiplier/plotFloquetMultiplier}
		\caption{Absolute values of Floquet multipliers corresponding to the bifurcating periodic solution for $\alpha_0 = c_u = 1$.}
		\label{fig:floquetMultipliers}
	\end{minipage}
	\hfill
	\begin{minipage}{0.47\textwidth}
		\centering
		\include{Images/BifDiagram/bifDiagram}
		\caption{Bifurcation diagram of \eqref{eq:redEqS2_Complex_1}--\eqref{eq:redEqS2_Complex_4} for $\alpha_0 = c_u = 1$ starting from the origin. The squares mark the bifurcation points, where HB denotes the Hopf bifurcation and TB denotes the Torus bifurcation.}
		\label{fig:bifurcationDiagram}
	\end{minipage}
\end{figure}

Using the spectral information at the fixed points, we now numerically study the existence of heteroclinic orbits in \eqref{eq:redEqS2_Complex_1}--\eqref{eq:redEqS2_Complex_4}, which converge to $(A^\ast,0,0,0)$ as $\hat{\xi} \rightarrow -\infty$.
Since the invading state $(A^\ast,0,0,0)$ has a one-dimensional unstable manifold, we employ a shooting-type algorithm, that is, we choose an initial condition on the unstable manifold and compute the forward orbit.
In practice, the initial condition is chosen on the unstable eigenspace close to the fixed point, which exploits that the unstable eigenspace is tangential to the unstable manifold.
We find the following behavior.
For $c_0 > c_0^\ast$ the forward orbit converges to the origin, which can be expected since the origin is stable provided that $c_0 > c_0^\ast$.
Note that, in contrast to the non-dispersive setting in \cite{hilder20}, the corresponding front is in general not monotonically decreasing since the eigenvalues of $L_0$ are genuinely complex.
As $c_0$ decreases below $c_0^\ast$, the forward orbit connects to the bifurcating periodic solution instead of the origin.
Furthermore, for $c_0 < c_0^{\ast\ast}$ but close to $c_0^{\ast\ast}$ the forward orbit connects to a quasiperiodic solution, which is a result of the secondary bifurcation at $c_0^{\ast\ast}$.
Finally, further decreasing $c_0$ leads to instability and no heteroclinic connections are found in the numerics. 
The numerical simulations indicate that this break up of heteroclinic connections occurs when the amplitude of the quasiperiodic state exceeds the amplitude of the invading state. 
As a demonstration of this behavior, we plot $2\Re(A) = 2\Re(A_+ + A_-)$ for each case in Figure \ref{fig:hetConnectionsGroupVelocity}.
To relate this to the full modulating front, recall that
\begin{align*}
	u(t,x) = \curlU(\xi,p) = \varepsilon(A_+(\varepsilon\xi) + A_-(\varepsilon\xi)) e^{ip} + c.c. + \curlO(\varepsilon^2) = 2\varepsilon\Re(A(\varepsilon\xi))\cos(p) + \curlO(\varepsilon^2),
\end{align*}
where we used \eqref{eq:modfrontAnsatz}, \eqref{eq:uCoordCMS2} and the fact that the eigenvectors $\phi_{c,\pm}^\text{SH}$ are normalized such that their first component is equal to one.

\begin{figure}
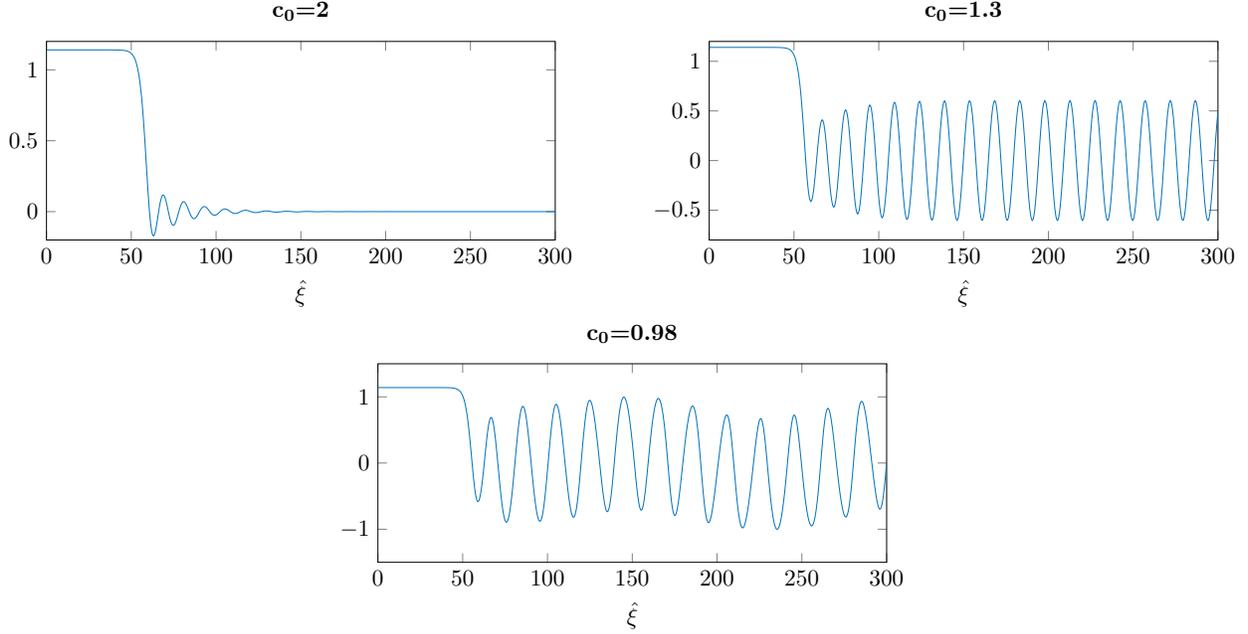

	\centering
	\include{Images/HetConnectionsGroupVel/plotHeteroclinicsComplete}
	\caption{Plot of amplitude of the modulating front $2\operatorname{Re}(A)$ for $\alpha_0 = c_u = 1$ and $c_0 = 2, 1.3, 0.98$. The heteroclinic connects the invading state $2A^\ast \approx 1.0954$ to the origin for $c_0 = 2$, to the bifurcating periodic state for $c_0 = 1.3$ and to the quasiperiodic state for $c_0 = 0.98$.}	\label{fig:hetConnectionsGroupVelocity}
\end{figure}

\subsubsection{Persistence of heteroclinic orbits and existence of modulating fronts}

We now turn to the persistence of heteroclinic connections, established numerically in the previous section, with respect to small perturbations in $(\varepsilon,\gamma_1,\gamma_2)$.
First, proceeding similarly to Section \ref{sec:S1}, we obtain that
\begin{align*}
	B = \dfrac{2\gamma_1}{c+c_v} \snorm{A_++A_-}^2 + \curlO(\varepsilon(1+\snorm{\gamma_1} + \snorm{\gamma_2})\snorm{A_+ + A_-}^2),
\end{align*}
that is, $B$ is determined by $\snorm{A_+ + A_-}^2$ even after including higher order terms.
We focus on the heteroclinic connections between the invading state $(A^\ast,0,0,0)$ and the origin, which can be observed numerically if $c_0$ is sufficiently large.
To make the argument more precise, we make the following assumptions about the spectra and the existence of heteroclinic orbits, which is in line with the numerical observations in the previous section.

\begin{enumerate}[label=(A\arabic*)]
	\item There exists a $c_0^\ast > 0$ such that for all $c_0 > c_0^\ast$ the origin is a (spectrally) stable, hyperbolic fixed point with respect to the dynamics given by \eqref{eq:redEqS2_Complex_1}--\eqref{eq:redEqS2_Complex_4} and the linearization about the invading state $(A^\ast,0,0,0)$ has a one-dimensional unstable eigenspace.
		\label{assump1}
	\item There exists a $c_0^\ast > 0$ such that for all $c_0 > c_0^\ast$ the system \eqref{eq:redEqS2_Complex_1}--\eqref{eq:redEqS2_Complex_4} has a heteroclinic orbit $(A_{r,\text{het}}, A_{i,\text{het}}, \tilde{A}_{r,\text{het}}, \tilde{A}_{i,\text{het}})$ such that
		\begin{align*}
			&\lim_{\hat{\xi} \rightarrow -\infty} (A_{r,\text{het}}, A_{i,\text{het}}, \tilde{A}_{r,\text{het}}, \tilde{A}_{i,\text{het}})(\hat{\xi}) = (A^\ast, 0,0,0), \\
			&\lim_{\hat{\xi} \rightarrow +\infty} (A_{r,\text{het}}, A_{i,\text{het}}, \tilde{A}_{r,\text{het}}, \tilde{A}_{i,\text{het}})(\hat{\xi}) = (0, 0,0,0).
		\end{align*}
		\label{assump2}
\end{enumerate}

Using the translational invariance of \eqref{eq:SHDis}--\eqref{eq:ConDis} we obtain the persistence of the fixed point $(A^\ast,0,0,0)$ for a specific choice of $\omega_0^\ast$ with respect to higher order perturbations as in Lemma \ref{lem:travelingWaves}.
Then, fix $c_0^\ast$ such that \ref{assump1}--\ref{assump2} hold.
Hence, the origin is a stable, hyperbolic fixed point and thus, the corresponding stable manifold is 4-dimensional in $\R^4$.
Therefore, using \ref{assump2}, the one-dimensional unstable manifold of the invading state $(A^\ast,0,0,0)$ and the stable manifold of the origin have to intersect transversally since the heteroclinic orbit is part of the unstable manifold and connects to the origin.
Thus, the intersection persists under small perturbations in $(\varepsilon, \gamma_1,\gamma_2)$ and we obtain the existence of modulating traveling fronts by reversing the center manifold reduction Lemma \ref{lem:centerManifold}.

\begin{theorem}\label{thm:S2}
	Let $c_0^\ast > 0$ such that \ref{assump1}--\ref{assump2} are satisfied.
	Furthermore, assume that $-c_v \neq 3c_u$ and $c = 3c_u + \varepsilon c_0$ with $c_0 > c_0^\ast$.
	Then, there exist $\varepsilon_0 > 0$ and $\gamma^\ast > 0$ such that for all $\varepsilon \in (0,\varepsilon_0)$ and $\gamma_1,\gamma_2 \in (-\gamma^\ast,\gamma^\ast)$ the system \eqref{eq:SHDis}--\eqref{eq:ConDis} has a family of modulating traveling front solutions $(u_\text{front},v_\text{front}) = (u_\text{front},v_\text{front})(\xi,p)$ satisfying \eqref{eq:modfrontBC} and are of the form
	\begin{align*}
		u_\text{front}(\xi,p) &= \varepsilon 2 A_{r,\text{het}}(\varepsilon\xi) \cos(p) + \curlO(\varepsilon^2), \\
		v_\text{front}(\xi,p) &= \varepsilon^2 \left(\dfrac{2\gamma_1}{c + c_v} A_{r,\text{het}}(\varepsilon\xi) + 2\Re\left(\dfrac{-2\gamma_1 + i \gamma_2}{2 - i(c_u+c_v)} A_{r,\text{het}}(\varepsilon\xi)^2\right)\cos(2p)\right) + \curlO(\varepsilon^3).
	\end{align*}
	Here, the heteroclinic orbit $A_{r,\text{het}}$ is given in \ref{assump2}, $\xi = x-ct$ and $p = x-\cphase t$ with $\cphase = c_u + \varepsilon^2 \omega_0^\ast$ from Lemma \ref{lem:travelingWaves}.
\end{theorem}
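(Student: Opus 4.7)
The plan is to establish the theorem by invoking the center manifold reduction of Lemma \ref{lem:centerManifold} and then arguing that the heteroclinic orbit postulated in assumption \ref{assump2} persists under the higher-order perturbations produced when we reintroduce the $\curlO(\varepsilon)$ tail and nonzero $(\gamma_1,\gamma_2)$. Recall that modulating traveling fronts in the full system \eqref{eq:SHDis}--\eqref{eq:ConDis} satisfying \eqref{eq:modfrontBC} correspond, via the spatial dynamics formulation \eqref{eq:spatDynFinal} and Lemma \ref{lem:centerManifold}, to heteroclinic orbits on the center manifold connecting the lift of $(\utw,\vtw)$ to the origin. In Scenario \ref{scenario2} these orbits live in a four-(real-)dimensional center manifold parametrized by $(A_+,A_-,B)\in\C^2\times\R$.

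First I would assemble the full reduced system including higher-order terms. The conservation-law structure of \eqref{eq:ConDis} forces the $B$-equation to be an exact derivative, so that after integration
\begin{equation*}
	B = \frac{2\gamma_1}{c+c_v}\snorm{A_++A_-}^2 + \curlO\!\bigl(\varepsilon(1+\snorm{\gamma_1}+\snorm{\gamma_2})\snorm{A_++A_-}^2\bigr),
\end{equation*}
which eliminates $B$ from the problem. Substituting this into the $(A_+,A_-)$-equations and passing to the coordinates $(A,\tilde A)$ via \eqref{eq:substitutionS2} yields a system of the form \eqref{eq:redEqS2_1}--\eqref{eq:redEqS2_2} plus a perturbation $\curlO(\varepsilon(1+\snorm{\gamma_1}+\snorm{\gamma_2}))$ that is smooth and vanishes to second order at the origin.

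Second, I would argue the persistence of the two asymptotic states. The trivial equilibrium $(A,\tilde A)=(0,0)$ is evidently preserved. For the invading state, I would reuse the strategy of Lemma \ref{lem:travelingWaves}: the translational invariance $A\mapsto A e^{i\phi}$ of the leading-order system is inherited by the full system, so after fixing a phase and using a correction $\omega_0^\ast$ to the phase velocity, the implicit function theorem yields a persistent real-positive fixed point $(A^\ast(\varepsilon,\gamma_1,\gamma_2),0)$ with $A^\ast\to A^\ast$ as $(\varepsilon,\gamma_1,\gamma_2)\to 0$. By continuity of the spectrum, assumption \ref{assump1} guarantees that, for $(\varepsilon,\gamma_1,\gamma_2)$ small, the linearization at this fixed point still has a one-dimensional unstable eigenspace and three eigenvalues with negative real part, while the linearization at the origin remains spectrally stable hyperbolic for $c_0>c_0^\ast$.

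Third, and this is the crux of the argument, I would establish the persistence of the heteroclinic orbit $(A_{r,\text{het}},A_{i,\text{het}},\tilde A_{r,\text{het}},\tilde A_{i,\text{het}})$ from assumption \ref{assump2} using a transversality argument. Since the origin is a spectrally stable hyperbolic equilibrium in $\R^4$, its local stable manifold is the full four-dimensional phase space near zero; in particular every orbit entering a small neighborhood of zero converges to it. Consequently the one-dimensional unstable manifold $W^u(A^\ast,0,0,0)$ of the invading state intersects $W^s(0)$ transversally (in fact trivially, since $W^s(0)$ is open). The intersection property is an open condition in the $C^1$-topology on the vector field, so it persists for all small perturbations $(\varepsilon,\gamma_1,\gamma_2)$. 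This yields, for each such parameter choice, a heteroclinic orbit $A_\text{het}(\tilde\xi;\varepsilon,\gamma_1,\gamma_2)$ on the center manifold connecting the (persistent) invading state to the origin.

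Finally I would translate back: inserting the constructed heteroclinic into the ansatz \eqref{eq:uCoordCMS2}--\eqref{eq:vCoordCMS2}, adding the center-manifold correction $h=(h_u,h_v)$, and undoing the spatial dynamics ansatz \eqref{eq:modfrontAnsatz} produces the claimed modulating front in the stated expansion form, with the leading-order profiles of $u_\text{front}$ and $v_\text{front}$ read off from \eqref{eq:uCoordCMS2}--\eqref{eq:vCoordCMS2} together with the expressions for $h_u,h_v$ and $B$ derived above. The main conceptual obstacle is the persistence step, but it is mild precisely because the origin is stable hyperbolic in the truncated system: the transversality required for robustness is automatic. The main technical care is needed in showing that the reduction of $B$ via the conservation law remains valid at the level of higher-order terms (so that the reduced dynamics genuinely lives in the $(A_+,A_-)$-plane up to $\varepsilon$-small coupling) and in verifying that the translation-invariance argument produces a smooth family of invading states whose unstable manifolds depend continuously on the parameters.
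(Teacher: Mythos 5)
Your proposal follows essentially the same route as the paper's proof: eliminate $B$ by exploiting the exact-derivative structure coming from the conservation law (so that $B$ is slaved to $\snorm{A_++A_-}^2$ even after the $\curlO(\varepsilon)$ corrections), persist the invading fixed point via the translational invariance and the implicit-function argument of Lemma~\ref{lem:travelingWaves}, use that the origin is a spectrally stable hyperbolic equilibrium so its stable manifold is locally all of $\R^4$ — which makes the intersection with the one-dimensional unstable manifold of the invading state automatically transversal and hence robust in the $C^1$-topology — and then revert the center manifold reduction to produce the modulating front in the stated expansion. The one small inaccuracy is your claim that the linearization at the invading state has ``three eigenvalues with negative real part'': as the paper notes, it has one zero eigenvalue (inherited from the $A\mapsto Ae^{i\phi}$ invariance, which produces a circle of equilibria) together with one unstable and two stable eigenvalues. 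Since the transversality argument uses only the one-dimensionality of $W^u$ at the invading state and the fact that the origin is a sink, this slip does not affect the validity of the argument.
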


\begin{remark}\label{rem:reversedModFrontS2}
	Recalling that the system \eqref{eq:redEqS2_1}--\eqref{eq:redEqS2_2} is reversible with respect to $R : (A, \tilde{A},c_0) \mapsto (A, -\tilde{A},-c_0)$ we therefore obtain the existence of reversed modulating fronts for $c_0 < -c_0^\ast$, that is $(u_\text{front},v_\text{front})$ satisfying \eqref{eq:revModfrontBC}.
	This is in line with the findings in Theorem \ref{thm:S1}, since $c_0 < 0$ implies that $c < 3c_u$.
\end{remark}

\subsection{Scenario \ref{scenario3}}\label{sec:S3}

In the previous scenarios \ref{scenario1} and \ref{scenario2} the spreading speed of the modulating front was far away from the linear group velocity of the conservation law \eqref{eq:ConDis}, that is, $c\vert_{\varepsilon = 0} \neq -c_v$.
For this choice of spreading speed, the central mode associated to the conservation law is determined by the non-conserved parts on the center manifold.
The main reason for this is the conservation law structure, which allows the removal of the one zero center eigenvalue of the conservation law by integration.
Recalling the spectral result Lemma \ref{lem:specLcon}, the conservation law part gains an additional non-zero central eigenvalue if the spreading speed is chosen close (with respect to $\varepsilon$) to $-c_v$.
Therefore, in this case the conservation law plays a more active role in the dynamics on the center manifold.

We assume that $-c_v \neq 3c_u$ and thus the linear group velocities of the equations \eqref{eq:SHDis} and \eqref{eq:ConDis} are different.
Then, we choose $c = -c_v + \varepsilon^2 c_0$ with $c_0 \neq 0$ and $\varepsilon > 0$.
Since $c\vert_{\varepsilon = 0} = -c_v \neq 3c_u$, the central eigenvalues originating from $L^\text{SH}$ have been calculated in Section \ref{sec:S1} and are given by
\begin{align*}
	\lambda_{c,1}^\text{SH} = \varepsilon^2 \dfrac{\alpha_0 + i \omega_0^\ast}{3c_u - c} + \curlO(\varepsilon^3),
\end{align*}
and $\lambda_{c,-1}^\text{SH} = \overline{\lambda_{c,1}^\text{SH}}$.
Furthermore, using Lemma \ref{lem:specLcon} the central spectrum of $L^\text{con}$ is given by a zero eigenvalue and one non-zero eigenvalue $\lambda_{c,1}^\text{con} = -c-c_v = -\varepsilon^2 c_0$.

\begin{remark}\label{rem:spatialRescalingS3}
	We point out that the choice of $c$ such that $c + c_v$ vanishes quadratically as $\varepsilon \rightarrow 0$ guarantees that both non-zero central eigenvalues have the same $\varepsilon$-scaling.
	Therefore, as in \eqref{eq:uCoordCMS1}--\eqref{eq:vCoordCMS1} and \eqref{eq:uCoordCMS2}--\eqref{eq:vCoordCMS2} we can introduce a rescaling of the spatial coordinate $\xi$ on the center manifold to balance the powers of $\varepsilon$ on the linear level of the reduced equation, see also Remark \ref{rem:spatialRescaling}.
\end{remark}

\subsubsection{Derivation of the reduced system}

As in Section \ref{sec:S1}, let $\phi_c^\text{SH}$ be the eigenvector corresponding to the central eigenvalue $\lambda_{c,1}^\text{SH}$ of $L_1^\text{SH}$.
Since the central eigenvalues of $L^\text{con}$ are precisely the eigenvalues of $L_0^\text{con}$, we introduce the following coordinates
\begin{align*}
	U_c(\xi) &= \varepsilon A(\varepsilon^2 \xi) \phi_c^\text{SH} e^{ip} + c.c., \\
	V_c(\xi) &= \varepsilon^2 B(\varepsilon^2 \xi),
\end{align*}
with $A(\tilde{\xi}) \in \C$, $B(\tilde{\xi}) = (B_1,B_2)^T(\tilde{\xi}) \in \R^2$ and $\tilde{\xi} = \varepsilon^2 \xi$.
Furthermore, the reduction map $h = (h_u,h_v)$ from Lemma \ref{lem:centerManifold} can be approximated by
\begin{align*}
	h_u(U_c,V_c) &= \left(\varepsilon^2 \dfrac{i A^2}{9+6ic_u} e^{2ip} + c.c.,0,0,0\right)^T + \curlO(\varepsilon^3), \\
	h_v(U_c,V_c) &= \left(\varepsilon^2 A^2 \dfrac{-2\gamma_1 + i \gamma_2}{2-i(c_u+c_v)} e^{2ip} + c.c.,0\right)^T + \curlO(\varepsilon^3),
\end{align*}
following the calculations in Section \ref{sec:S1}.
Hence, the reduced system on the center manifold is given by
\begin{align*}
	\varepsilon^3 \partial_{\tilde{\xi}} A &= \varepsilon\lambda_{c,1}^\text{SH} A + \curlP_{\phi_{c}^\text{SH}} \curlN^\text{SH}(U,V), \\
	\varepsilon^4 \partial_{\tilde{\xi}} B &= \varepsilon^2 L_0^\text{con} B + \curlN_0^\text{con}(U,V),
\end{align*}
where $(U,V) = (U_c,V_c) + h(U_c,V_c)$ and $\curlP_{\phi_{c}^\text{SH}}$ is the spectral projection onto the eigenspace spanned by $\phi_{c}^\text{SH}$, see \eqref{eq:spectralProjections}.

Since the equation for $A$ is already been derived in Section \ref{sec:S1}, we focus on the equation for $B$.
Therefore, we recall that 
\begin{align*}
	L_0^\text{con} = \begin{pmatrix}
		0 & 1 \\
		0 & -\varepsilon^2 c_0
	\end{pmatrix},
\end{align*}
using $c = -c_v + \varepsilon^2 c_0$, and
\begin{align*}
	\curlN_0^\text{con} &= \left(0, -2\gamma_1 \sum_{k+j=0} (U_{k0}U_{j2} + U_{k1}U_{j1}) - 2\gamma_2 \sum_{k+j=0} U_{k0}U_{j1}\right)^T \\
	&= \left(0, -2 \gamma_1 \varepsilon^2 \partial_{\tilde{\xi}}\sum_{k+j=0} U_{k0}U_{j1} - \gamma_2 \varepsilon^2 \partial_{\tilde{\xi}}\sum_{k+j=0} U_{k0}U_{j0}\right)^T,
\end{align*}
see \eqref{eq:nonlinCon}, where we used in particular that $\partial_\xi = \varepsilon^2 \partial_{\tilde{\xi}}$.
Hence, $B$ satisfies
\begin{align*}
	\varepsilon^2 \partial_{\tilde{\xi}} B_1 &= B_2, \\
	\varepsilon^2 \partial_{\tilde{\xi}} B_2 &= -\varepsilon^2 c_0 B_2 + \left(-2 \gamma_1 \partial_{\tilde{\xi}}\sum_{k+j=0} U_{k0}U_{j1} - \gamma_2 \partial_{\tilde{\xi}}\sum_{k+j=0} U_{k0}U_{j0}\right).
\end{align*}
Writing this system as a second order equation and integrating once with respect to $\tilde{\xi}$, we find that
\begin{align}
	\partial_{\tilde{\xi}} B_1 = -c_0 B_1 + \varepsilon^{-4} \left(-2\gamma_1 \sum_{k+j = 0} U_{k0}U_{j1} - \gamma_2 \sum_{k+j = 0} U_{k0}U_{j0}\right).
	\label{eq:redEqS3B_intermediate}
\end{align}
Following \cite{eckmannWayne91}, it holds that $\phi_c^\text{SH} = (1,\lambda_{c,1}^\text{SH},(\lambda_{c,1}^\text{SH})^2,(\lambda_{c,1}^\text{SH})^3)^T$ and thus, we have that
\begin{align*}
	U_{10} &= \varepsilon A + \curlO(\varepsilon^3), \quad U_{11} = \varepsilon \lambda_{c,1}^\text{SH} A + \curlO(\varepsilon^5), \\
	U_{j0} &= \curlO(\varepsilon^2), \quad U_{j1} = \curlO(\varepsilon^4),
\end{align*}
with $j \neq \pm 1$ and $U_{-1} = \overline{U_{1}}$.
In particular, a non-zero $\gamma_2$-term in \eqref{eq:redEqS3B_intermediate} leads to contributions of order $\varepsilon^{-2}$ in the chosen scaling.
To avoid this, we set $\gamma_2 = 0$ in what follows.
Then, after neglecting higher order terms, we obtain the reduced system on the center manifold
\begin{subequations}
	\begin{align}
		\partial_{\tilde{\xi}} A &= \dfrac{\alpha_0 + i \omega_0^\ast}{3c_u - c} A + \dfrac{1}{3c_u - c} \left(AB_1 + \left(-3 - \dfrac{1}{9+6ic_u} + \dfrac{-2\gamma_1}{2-i(c_u+c_v)}\right) A\snorm{A}^2\right), \label{eq:redEqS3_1}\\
		\partial_{\tilde{\xi}} B_1 &= -c_0 B_1 - 4\gamma_1 \dfrac{\alpha_0}{3c_u - c} \snorm{A}^2, \label{eq:redEqS3_2}
	\end{align}
\end{subequations}
where we used that $\sum_{k+j=0} U_{k0}U_{j1} = 2\varepsilon^2\Re(\lambda_{c,1}^\text{SH}) \snorm{A}^2 + \curlO(\varepsilon^6)$.

\subsubsection{Existence of heteroclinic orbtis, their persistence and existence of modulating traveling fronts}

We now establish the existence of heteroclinic orbits in \eqref{eq:redEqS3_1}--\eqref{eq:redEqS3_2}, which connect $(A^\ast,0)$ for $\tilde{\xi} \rightarrow -\infty$ to the origin for $\tilde{\xi} \rightarrow +\infty$, if $(\gamma_1,\varepsilon)$ are close to zero.
Note that for $\gamma_1 = 0$, the set $\{A \in \C, B_1 \in \R \,:\, B_1 = 0\}$ is invariant.
Thus, in this case the problem reduces to finding heteroclinic orbits in \eqref{eq:redEqS3_1} for $\gamma_1 = B_1 = 0$, which has already been done in Section \ref{sec:S1}.
Hence, it remains to show that the orbit persists for $\gamma_1$ close to zero and after adding higher order terms in $\varepsilon > 0$.

As in Section \ref{sec:S1}, we use polar coordinates for $A \in \C$ and write $A = r_A e^{i\phi_A}$.
Then, the system \eqref{eq:redEqS3_1}--\eqref{eq:redEqS3_2} reads as
\begin{subequations}
	\begin{align}
		\partial_{\tilde{\xi}} r_A &= \dfrac{\alpha_0}{3c_u - c} r_A + \dfrac{1}{3c_u - c} \left(r_A B_1 + \left(-3-\dfrac{1}{9+4c_u^2}\right) r_A^3\right), \label{eq:redEqS3_polarCoord_1}\\
		\partial_{\tilde{\xi}} \phi_A &= \dfrac{\omega_0^\ast}{3c_u - c} + \dfrac{r_A^2}{3c_u - c} \dfrac{2c_u}{27+12c_u^2}, \label{eq:redEqS3_polarCoord_2}\\
		\partial_{\tilde{\xi}} B_1 &= -c_0, \label{eq:redEqS3_polarCoord_3}
	\end{align}
\end{subequations}
where we still set $\gamma_1 = 0$.
Since \eqref{eq:redEqS3_polarCoord_1} and \eqref{eq:redEqS3_polarCoord_3} are independent of $\phi_A$, we can analyze them separately.
Furthermore, note that the system is invariant with respect to $(\tilde{\xi},c_0,3c_u-c) \mapsto (-\tilde{\xi},-c_0,-(3c_u-c))$.
Hence, we restrict the analysis to the case $(3c_u - c) < 0$ or, equivalently, $-c_v > 3c_u$ and $\varepsilon > 0$ sufficiently small.
Linearizing the system \eqref{eq:redEqS3_polarCoord_1} and \eqref{eq:redEqS3_polarCoord_3} about the origin, we find
\begin{align*}
	L_0 = \begin{pmatrix}
		\frac{\alpha_0}{3c_u -c} & 0 \\
		0 & -c_0
	\end{pmatrix}.
\end{align*}
Thus, if $c_0 > 0$, the origin is a stable fixed point, while for $c_0 < 0$ it is a saddle point.
Next, we linearize about the invading state $(A^\ast,0)$, which yields
\begin{align*}
	L_\text{invading} = \begin{pmatrix}
		-\frac{4\alpha_0}{3c_u - c} & \frac{A^\ast}{3c_u - c} \\
		0 & -c_0
	\end{pmatrix}.
\end{align*}
Hence, if $c_0 > 0$, the invading state is a saddle point with a one-dimensional unstable eigenspace, while for $c_0 < 0$, it is an unstable fixed point.

If $c_0 > 0$, the setting is comparable to the ones in \cite{hilder20} and Section \ref{sec:S2}, that is, the linearization about the invading state has a one-dimensional unstable manifold and the origin is a stable, hyperbolic fixed point.
Therefore, we establish the persistence similar to these cases.
Since there exists a heteroclinic connection from $(A^\ast,0)$ to the origin if $B_1 = 0$, the unstable manifold of the linearization about the invading state and the stable manifold of the linearization about the origin have to intersect for $B_1 = 0$.
Furthermore, the intersection is also transversal since the stable manifold of the linearization about the origin is a two-dimensional in $\R^2$ (recall that we neglect the phase-equation \eqref{eq:redEqS3_polarCoord_2} for now).
Hence, the intersection persists under small perturbations, that is for $\gamma_1$ close to zero and $\varepsilon > 0$ small and we obtain a solution $(A_\text{het},B_{1,\text{het}})(\tilde{\xi})$ satisfying
\begin{align}
	\lim_{\tilde{\xi} \rightarrow -\infty} (A_\text{het},B_{1,\text{het}})(\tilde{\xi}) = (A^\ast,0) \text{ and } \lim_{\tilde{\xi} \rightarrow +\infty} (A_\text{het},B_{1,\text{het}})(\tilde{\xi}) = (0,0)
	\label{eq:asymptoticConditions_S3}
\end{align}
after including \eqref{eq:redEqS3_polarCoord_2}.

In the case $c_0 < 0$, consider the time-reversed system, i.e.~$\tilde{\xi} \mapsto \tilde{\xi}^R := -\tilde{\xi}$.
Then, the origin has a one-dimensional unstable manifold and the invading state is a stable fixed point.
Therefore, we can proceed as above and find a persistent heteroclinic orbit, which converges to the origin for $\tilde{\xi}^R \rightarrow -\infty$ and to $(A^\ast,0)$ for $\tilde{\xi}^R \rightarrow +\infty$.
Reversing the coordinate change then gives a persistent heteroclinic orbit satisfying \eqref{eq:asymptoticConditions_S3}.
Combining both cases and using Lemma \ref{lem:centerManifold}, we obtain the existence of modulating traveling fronts.

\begin{theorem}\label{thm:S3}
	Let $c = -c_v + \varepsilon^2 c_0$ with $c_0 \neq 0$.
	Furthermore, assume that $-c_v > 3c_u$, $c_u \neq -c_v$ and $\gamma_2 = 0$.
	Then, there exist $\varepsilon_0 > 0$ and $\gamma_1^\ast > 0$ such that for all $\varepsilon > 0$ and $\gamma_1 \in (-\gamma_1^\ast,\gamma_1^\ast)$ the system \eqref{eq:SHDis}--\eqref{eq:ConDis} has a family of modulating traveling front solutions $(u_\text{front},v_\text{front})(\xi,p)$ satisfying \eqref{eq:modfrontBC} and 
	\begin{align*}
		u_\text{front}(\xi,p) &= \varepsilon 2 \Re(\Ahet(\varepsilon^2 \xi)) \cos(p) + \curlO(\varepsilon^2), \\
		v_\text{front}(\xi,p) &= \varepsilon^2 \left(B_{1,\text{het}}(\varepsilon^2 \xi) + 2\Re\left(\dfrac{-2\gamma_1 + i \gamma_2}{2 - i (c_u+c_v)} \Ahet(\varepsilon^2\xi)^2\right)\cos(2p)\right) + \curlO(\varepsilon^3),
	\end{align*}
	with $(\Ahet,B_{1,\text{het}})$ satisfying \eqref{eq:asymptoticConditions_S3}, $\xi = x-ct$ and $p = x-\cphase t$ with $\cphase = c_u + \varepsilon^2 \omega_0^\ast$ given in Lemma \ref{lem:travelingWaves}.
\end{theorem}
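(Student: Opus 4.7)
My plan is to reduce Theorem \ref{thm:S3} to the construction of a persistent heteroclinic orbit in the reduced system \eqref{eq:redEqS3_1}--\eqref{eq:redEqS3_2}, and then invoke Lemma \ref{lem:centerManifold} to lift the connection back to \eqref{eq:SHDis}--\eqref{eq:ConDis}. Since the derivation of the reduced system is already in place, the task is to construct an orbit joining $(A^\ast,0)$ to $(0,0)$ for all small $\varepsilon > 0$ and $\gamma_1$ close to zero, and to verify that the explicit formulas for $(u_\text{front},v_\text{front})$ follow from the center manifold ansatz together with the approximations of $h_u, h_v$ computed above.

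First I would exploit the gauge symmetry $A \mapsto A e^{i\phi}$, inherited from the translation invariance of \eqref{eq:SHDis}--\eqref{eq:ConDis}, to pass to polar coordinates $A = r_A e^{i\phi_A}$; this decouples the phase equation from a planar $(r_A,B_1)$ system. Using the reversibility $(\tilde{\xi},c_0,3c_u-c) \mapsto (-\tilde{\xi},-c_0,-(3c_u-c))$ I would restrict to $c_0 > 0$, and the standing assumption $-c_v > 3c_u$ gives $3c_u - c < 0$ for small $\varepsilon$. At $\gamma_1 = 0$ and to leading order in $\varepsilon$, the line $\{B_1 = 0\}$ is invariant and the restricted $r_A$-equation coincides with \eqref{eq:redEqS1PolCoordR}, so Section \ref{sec:S1} already supplies the unperturbed heteroclinic $r_{c,\text{het}}$ from $A^\ast$ to $0$.

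Next I would propagate this connection to the full reduced system by analyzing the linearizations $L_\text{invading}$ and $L_0$ computed above: under the present sign assumptions the invading state is a saddle with one-dimensional unstable eigenspace, while the origin is a hyperbolic sink whose local stable manifold exhausts the $(r_A,B_1)$-plane. Persistence of the equilibrium $(A^\ast,0)$ under the full system follows from the implicit function theorem combined with gauge invariance, exactly as in the proof of Lemma \ref{lem:travelingWaves}. Since the unperturbed heteroclinic enters the open basin of attraction of the origin and the one-dimensional unstable manifold of $(A^\ast,0)$ depends continuously on $(\varepsilon,\gamma_1)$, the perturbed unstable manifold still lands inside this basin, producing a heteroclinic $(r_{A,\text{het}},B_{1,\text{het}})$ for all small $(\gamma_1,\varepsilon)$; reintegrating \eqref{eq:redEqS3_polarCoord_2} along this profile reconstructs $A_\text{het}$, and substitution into the ansatz of Lemma \ref{lem:centerManifold} yields the asserted form of $(u_\text{front},v_\text{front})$. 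The main obstacle I anticipate is ensuring that the higher-order corrections omitted from \eqref{eq:redEqS3_1}--\eqref{eq:redEqS3_2} are genuinely $C^1$-small in a neighborhood of the unperturbed orbit so that the perturbation argument works uniformly along the entire heteroclinic; this will reduce to continuous dependence of $\lambda_{c,1}^\text{SH}$, $\phi_c^\text{SH}$ and the associated spectral projections on $(\varepsilon,\gamma_1)$, following the same bookkeeping used at the end of Section \ref{sec:S1}.
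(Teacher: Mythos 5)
Your proposal follows the paper's own proof almost line for line: polar coordinates via the gauge symmetry, restriction of the $(r_A, B_1)$-planar subsystem to $\{B_1 = 0\}$ at $\gamma_1 = 0$ where it reduces to the Scenario~I equation \eqref{eq:redEqS1PolCoordR}, and persistence via the observation that the one-dimensional unstable manifold of $(A^\ast,0)$ must intersect the full-dimensional stable manifold of the hyperbolic sink at the origin transversally (your ``open basin of attraction'' phrasing is an equivalent formulation). The persistence of the invading fixed point via the implicit function theorem and the lift through Lemma~\ref{lem:centerManifold} are also as in the paper.

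The one genuine slip is your handling of $c_0 < 0$. The three-fold reversibility $(\tilde{\xi},c_0,3c_u-c) \mapsto (-\tilde{\xi},-c_0,-(3c_u-c))$ flips \emph{both} $c_0$ and $3c_u - c$ simultaneously, so it cannot be used to reduce to $c_0 > 0$ while keeping the standing assumption $-c_v > 3c_u$ (equivalently $3c_u - c < 0$): after applying the symmetry you land in the regime $-c_v < 3c_u$. That symmetry is exactly what the paper invokes in Remark~\ref{rem:symmetryS3} to pass between the two sign regimes of $-c_v - 3c_u$, not between the two signs of $c_0$. Within the fixed regime $-c_v > 3c_u$ the case $c_0 < 0$ needs its own argument: time-reverse the dynamics only, so that the linearization $L_0$ at the origin then has a one-dimensional unstable eigenspace and the invading state becomes a stable node, and run the same transversal-intersection argument with the roles of stable and unstable manifolds swapped. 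You omit this step, so as written your proof covers only $c_0 > 0$ under the theorem's hypotheses.
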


\begin{remark}\label{rem:symmetryS3}
	Due to the symmetry $(\tilde{\xi},c_0,3c_u-c) \mapsto (-\tilde{\xi},-c_0,-(3c_u-c))$ in the reduced system \eqref{eq:redEqS3_polarCoord_1}--\eqref{eq:redEqS3_polarCoord_3}, we obtain a reversed modulating front in the case $-c_v < 3c_u$, see also Remark \ref{rem:reversedModFrontS2}.
\end{remark}

\subsection{Scenario \ref{scenario4}} \label{sec:S4}
In this scenario, we again assume that $-c_v \neq 3c_u$ and that the spreading speed of the modulating traveling front is close to $-c_v$ when $\varepsilon > 0$ is small, as in Scenario \ref{scenario3}.
Using Lemma \ref{lem:specLSH}, the set of central eigenvalues of $L^\text{SH}$, the linear part originating from the dispersive Swift-Hohenberg equation \eqref{eq:SHDis} is given by two complex conjugated eigenvalues $\lambda_{c,\pm 1}^\text{SH}$ corresponding to $L_{\pm 1}^\text{SH}$, respectively.
In particular, it holds that $\lambda_{c,\pm 1}^\text{SH} = \curlO(\varepsilon^2)$, see Sections \ref{sec:S1} and \ref{sec:S3}.
Furthermore, using Lemma \ref{lem:specLcon}, the central spectrum of $L^\text{con}$ is given by the two eigenvalues of $L_0^\text{con}$ given by $\lambda_{c,0}^\text{con} = 0$ and $\lambda_{c,1}^\text{con} = -c-c_v$.

Recall that in Scenario \ref{scenario3}, we chose $c$ such that $-c-c_v$ vanishes quadratically as $\varepsilon \rightarrow 0$, which ensures that all central eigenvalues are of the same order $\curlO(\varepsilon^2)$.
This allows for a rescaling of the spatial variable $\xi$ such that all linear parts of the reduced system on the center manifold have the same scaling in $\varepsilon$, see Remarks \ref{rem:spatialRescaling} and \ref{rem:spatialRescalingS3}.
In this section, we instead set $c = -c_v + \varepsilon c_0$ with $c_0 \neq 0$, which leads to $\lambda_{c,1}^\text{con} = -\varepsilon c_0$, which vanishes linearly as $\varepsilon \rightarrow 0$ instead of quadratically, and hence, the central eigenvalues have a different scaling in $\varepsilon$.

\subsubsection{The reduced system on the center manifold}

As in Section \ref{sec:S3}, to capture the relevant dynamics on the center manifold in this setting, we introduce
\begin{align*}
	U(\xi) &= \varepsilon A(\varepsilon^2 \xi) \phi_c^\text{SH} e^{ip} + c.c. + h_u(A,B), \\
	V(\xi) &= \varepsilon^2 B(\varepsilon^2 \xi) + h_v(A,B),
\end{align*}
where $A(\tilde{\xi}) \in \C$, $B(\tilde{\xi}) = (B_1,B_2)^T \in \R^2$ and $\tilde{\xi} = \varepsilon^2 \xi$.
Proceeding as in Section \ref{sec:S3}, the dynamics on the center manifold is, up to higher order terms, given by
\begin{align*}
	\partial_{\tilde{\xi}} A &= \dfrac{\alpha_0 + i \omega_0^\ast}{3c_u - c} A + \dfrac{1}{3c_u - c} \left(AB_1 + \left(-3 - \dfrac{1}{9+6ic_u} + \dfrac{-2\gamma_1 + i\gamma_2}{2 - i(c_u+c_v)}\right) A \snorm{A}^2\right), \\
	\partial_{\tilde{\xi}} B_1 &= -\varepsilon^{-1} c_0 B_1 + \varepsilon^{-4} \left(-2\gamma_1 \sum_{k+j=0} U_{k0}U_{j1} - \gamma_2 \sum_{k+j=0} U_{k0}U_{j0}\right).
\end{align*}
Similar to Section \ref{sec:S3}, the $\gamma_1$-nonlinearity is of order $\varepsilon^4$ since $U_{10} \sim \varepsilon$ and $U_{11} \sim \varepsilon^3$ are the lowest order $U$-terms.
This balances with the $\varepsilon^{-4}$ in front of the nonlinearity and leads to an $\curlO(1)$-nonlinearity, see also \eqref{eq:redEqS3_2}.
In contrast, the $\gamma_2$-nonlinearity is only of order $\varepsilon^2$, since the corresponding term in \eqref{eq:ConDis} contains only one derivative instead of two.
In Section \ref{sec:S3}, we therefore set $\gamma_2 = 0$ to remove the singularity at $\varepsilon = 0$, in the current setting however, we choose $\gamma_2 = \varepsilon\gamma_2^0$ with $\gamma_2^0 \in \R$, which then balances with the $\curlO(\varepsilon^{-1})$ linearity.
Thus, the dynamics on the center manifold, up to higher order terms, is given by
\begin{subequations}
	\begin{align}
		\partial_{\tilde{\xi}} A &= \dfrac{\alpha_0 + i \omega_0^\ast}{3c_u - c} A + \dfrac{1}{3c_u - c} \left(AB_1 + \left(-3 - \dfrac{1}{9+6ic_u} + \dfrac{-2\gamma_1 + i\gamma_2}{2 - i(c_u+c_v)}\right) A \snorm{A}^2\right), \label{eq:redEqS4_1}\\
		\partial_{\tilde{\xi}} B_1 &= -\varepsilon^{-1} (c_0 B_1 + 2\gamma_2^0 \snorm{A}^2) - 4\gamma_1\dfrac{\alpha_0}{3c_u - c}\snorm{A}^2. \label{eq:redEqS4_2}
	\end{align}
\end{subequations}

\subsubsection{Heteroclinic orbits, their persistence and existence of modulating traveling fronts}

We now study the dynamics of \eqref{eq:redEqS4_1}--\eqref{eq:redEqS4_2} for $\varepsilon > 0$ small and in particular, the existence of heteroclinic orbits.
Formally, the $\varepsilon^{-1}$-terms in \eqref{eq:redEqS4_2} are the most relevant and equating this to zero then yields an algebraic equation for $B_1$, that is,
\begin{align*}
	B_1 = -\dfrac{2\gamma_2^0}{c_0} \snorm{A}^2.
\end{align*}
We point out that a similar relation has been obtained in Sections \ref{sec:S1} and \ref{sec:S2}, where $B$ was determined by $A$, if one accounts for $c + c_v = \varepsilon c_0$ and $\gamma_2 = \varepsilon \gamma_2^0$.
Therefore, we might expect that, to lowest order, the dynamics of $B_1$ is determined by $A$, although $B_1$ is likely not fully governed by $A$ as in Sections \ref{sec:S1} and \ref{sec:S2}.

For a more rigorous treatment, we write $A$ in polar coordinates, i.e.~$A = r_A e^{i\phi_A}$ and multiply \eqref{eq:redEqS4_2} by $\varepsilon$, which yields
\begin{subequations}
	\begin{align}
		\partial_{\tilde{\xi}} r_A &= \dfrac{\alpha_0}{3c_u - c} r_A + \dfrac{1}{3c_u - c} \left(r_A B_1 + \left(-3-\dfrac{1}{9+4c_u^2} - \dfrac{4\gamma_1}{4 + (c_u+c_v)}\right) r_A^3\right), \label{eq:fastSlowSys1}\\
		\partial_{\tilde{\xi}} \phi_A &= \dfrac{\omega_0^\ast}{3c_u - c} - \dfrac{r_A^2}{3c_u - c} \left(\dfrac{2c_u}{27+12c_u^2} + \dfrac{2\gamma_1(c_u+c_v)}{4 + (c_u+c_v)^2}\right), \label{eq:fastSlowSys2} \\
		\varepsilon\partial_{\tilde{\xi}} B_1 &= -c_0 B_1 - 2\gamma_2^0 r_A^2 - \varepsilon 4\gamma_1 \dfrac{\alpha_0}{3c_u-c} r_A^2. \label{eq:fastSlowSys3}
	\end{align}
\end{subequations}
To analyze this system, we exploit its fast-slow structure and use geometric singular perturbation theory, see e.g.~\cite{kuehn15}, to establish the existence and persistence of heteroclinic connections from $(A^\ast,0,B_1^\ast)$ to the origin.
Here $A^\ast$ is given in Lemma \ref{lem:travelingWaves} and $B_1^\ast = -\frac{2\gamma_2^0}{c_0} \snorm{A^\ast}^2 + \curlO(\varepsilon)$.
We highlight that, similar to Remark \ref{rem:nonlinDispersionMotivationS1}, the inclusion of a nonlinear dispersion term (i.e.~$\gamma_2 \neq 0$) enriches the mathematical analysis.

As in Section \ref{sec:S3}, we first note that the $r_A$- and $B_1$-equations are independent of the phase $\phi_A$ and thus, it is sufficent to construct a heteroclinic orbit for the $(r_A,B_1)$-system determined by \eqref{eq:fastSlowSys1} and \eqref{eq:fastSlowSys3}.
Then, to construct a persistent heteroclinic orbit, we follow \cite[Section 6.1]{kuehn15}.
First, setting $\varepsilon = 0$ we obtain the slow subsystem, which reads as
\begin{subequations}
	\begin{align}
		\partial_{\tilde{\xi}} r_A &= \dfrac{\alpha_0}{3c_u - c} r_A + \dfrac{1}{3c_u - c} \left(r_A B_1 + \left(-3-\dfrac{1}{9+4c_u^2} - \dfrac{4\gamma_1}{4 + (c_u+c_v)}\right) r_A^3\right), \label{eq:slowSubsys1} \\
		0 &= -c_0 B_1 - 2\gamma_2^0 r_A^2. \label{eq:slowSubsys2}
	\end{align}
\end{subequations}
Due to \eqref{eq:slowSubsys2}, the slow subsystem is restricted to the critical manifold $C_0$ defined by
\begin{align*}
	C_0 := \left\{(r_A,B_1) \in \R^2 \,:\, B_1 = -\dfrac{2\gamma_2^0}{c_0} r_A^2\right\}.
\end{align*}
Next, switching to the fast time-scale $\tilde{\xi}_f := \tilde{\xi}/\varepsilon$ in \eqref{eq:fastSlowSys1} and \eqref{eq:fastSlowSys3} and again setting $\varepsilon = 0$ gives the fast subsystem
\begin{subequations}
	\begin{align}
		\partial_{\tilde{\xi}_f} r_A &= 0, \label{eq:fastSubsys1} \\
		\partial_{\tilde{\xi}_f} B_1 &= -c_0 B_1 - 2\gamma_2^0 r_A^2. \label{eq:fastSubsys2}
	\end{align}
\end{subequations}
Note that the points on the critical manifold $C_0$ are equilibria of the fast subsystem \eqref{eq:fastSubsys1}--\eqref{eq:fastSubsys2}.
Additionally, if $c_0 > 0$, the critical manifold is globally attractive with respect to the flow of the fast subsystem \eqref{eq:fastSubsys1}--\eqref{eq:fastSubsys2}, that is, for all initial data the solution of \eqref{eq:fastSubsys1}--\eqref{eq:fastSubsys2} tends to a point on $C_0$ for $\tilde{\xi}_f \rightarrow +\infty$, see Figure \ref{fig:phasePlane}.
Similarly, if $c_0 < 0$, the critical manifold is globally repelling with respect to the flow of the fast subsystem, i.e.~for all initial data the solution of \eqref{eq:fastSubsys1}--\eqref{eq:fastSubsys2} tends to a point on $C_0$ for $\tilde{\xi}_f \rightarrow -\infty$.

In the subsequent analysis, we restrict to the case $c_0 > 0$.
The case $c_0 < 0$ can be handled by reversing time as in Section \ref{sec:S3}, which is discussed in Remark \ref{rem:c0vNegative}.
Then, we introduce the unstable manifold of the invading state $p_f = (A^\ast, -2\gamma_2^0(c_0)^{-1} (A^\ast)^2)$ and the stable manifold of the origin $p_0 = (0,0)$ with respect to the slow flow as
\begin{align*}
	W^u(p_f) &:= \{p \in C_0 \,:\, \Phi_{\tilde{\xi}}^\text{slow}(p) \rightarrow p_f \text{ for } \tilde{\xi} \rightarrow -\infty\}, \\
	W^s(p_0) &:= \{p \in C_0 \,:\, \Phi_{\tilde{\xi}}^\text{slow}(p) \rightarrow p_0 \text{ for } \tilde{\xi} \rightarrow +\infty\},
\end{align*}
where $\Phi_{\tilde{\xi}}^\text{slow}$ denotes the flow of the slow subsystem \eqref{eq:slowSubsys1}--\eqref{eq:slowSubsys2}.
Finally, we define
\begin{align*}
	N_{p_f} &:= \bigcup_{p \in W^u(p_f)} \{q \in \R^2 \,:\, \Phi_{\tilde{\xi}_f}^\text{fast}(q) \rightarrow p \text{ for } \tilde{\xi}_f \rightarrow -\infty\}, \\
	N_{p_0} &:= \bigcup_{p \in W^s(p_f)} \{q \in \R^2 \,:\, \Phi_{\tilde{\xi}_f}^\text{fast}(q) \rightarrow p \text{ for } \tilde{\xi}_f \rightarrow +\infty\},
\end{align*}
where $\Phi_{\tilde{\xi}_f}^\text{fast}$ denotes the flow of the fast subsystem \eqref{eq:fastSubsys1}--\eqref{eq:fastSubsys2}.
Using this notation, we show the existence of a persistent heteroclinic orbit by applying \cite[Theorem 6.1.1]{kuehn15}, which we recapitulate (adapted to our setting) here.

\begin{figure}
	\centering
	\begin{tikzpicture}
		\begin{axis}[
				xmin = -1,
				xmax = 5,
				ymin = -5,
				ymax = 1,
				xtick = {4},
				xticklabels = {$A^\ast$},
				ytick = {},
				yticklabels = \empty,
				axis x line = center,
				x label style={anchor=south west},
				axis y line = center,
				xlabel = $r_A$,
				ylabel = $B_1$,
			]
			
			\addplot[mark=none, domain=0:5, color=red, postaction={decorate},
				decoration={markings, 
					mark=at position 0.25 with {\arrowreversed{Stealth[scale=1.5]}},
					mark=at position 0.5 with {\arrowreversed{Stealth[scale=1.5]}},
					mark=at position 0.8 with {\arrow{Stealth[scale=1.5]}}
				}
			]{-0.25*x^2};
			
			\pgfplotsinvokeforeach {0.75,1.5,2.25,3,3.75,4.5}
			{
				\draw[color=blue, postaction={decorate},
					decoration={markings, 
						mark=at position 0.25 with {\arrow{>>}},
						mark=at position 0.75 with {\arrow{>>}}
					}] (axis cs:#1,1) -- (axis cs:#1,-0.25*#1^2);
			}
			\pgfplotsinvokeforeach {0.5,1.25,2,2.75,3.5}
			{
				\draw[color=blue, postaction={decorate},
					decoration={markings, 
						mark=at position 0.25 with {\arrow{>>}},
						mark=at position 0.75 with {\arrow{>>}}
					}] (axis cs:#1,-5) -- (axis cs:#1,-0.25*#1^2);
			}
			
			\addplot[mark=x,only marks, mark size = 3pt, mark options={line width = 1pt}] coordinates {(0,0) (4,-4)};
			
			\node[anchor=south west] at (axis cs:2.4,-1.5625){\textcolor{red}{$C_0$}};
			\node[anchor=north east] at (axis cs:4,-4){$p_f$};
			\node[anchor=north east] at (axis cs:0,0){$p_0$};
		\end{axis}
	\end{tikzpicture}
	\caption{Depiction of the $(r_A,B_1)$-phase plane. The critical manifold $C_0$ and the flow of the slow subsystem \eqref{eq:slowSubsys1}--\eqref{eq:slowSubsys2} are plotted in red. Additionally, the flow of the fast subsystem \eqref{eq:fastSubsys1}--\eqref{eq:fastSubsys2} are drawn in blue. Here, the parameters $\gamma_2^0$ and $c_0$ are chosen such that $p_f$ is an unstable fixed point with respect to the dynamics of the slow subsystem, that is, $c_0 > 0$ and \eqref{eq:conditionGamma2S4} holds.}
	\label{fig:phasePlane}
\end{figure}

\begin{lemma}\label{lem:kuehnThm}
	Assuming that the fixed points $p_f$ and $p_0$ are hyperbolic in the slow flow and that the manifolds $N_{p_f}$ and $N_{p_0}$ intersect transversally.
	Then, provided that the slow subsystem \eqref{eq:slowSubsys1}--\eqref{eq:slowSubsys2} has a heteroclinic orbit from $p_f$ to $p_0$, there exists an $\varepsilon_0 > 0$ such that for all $\varepsilon \in (0,\varepsilon_0)$ the full system \eqref{eq:fastSlowSys1}--\eqref{eq:fastSlowSys3} has a heteroclinic orbit connecting $p_f$ for $\tilde{\xi} \rightarrow -\infty$ to $p_0$ for $\tilde{\xi} \rightarrow +\infty$.
\end{lemma}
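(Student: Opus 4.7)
The plan is to invoke Fenichel's geometric singular perturbation theory, which underlies Theorem 6.1.1 of \cite{kuehn15}. First I would verify that the critical manifold $C_0$ is normally hyperbolic: the linearization of the fast subsystem \eqref{eq:fastSubsys1}--\eqref{eq:fastSubsys2} transverse to $C_0$ has the single eigenvalue $-c_0$, which is nonzero by hypothesis. Fenichel's first theorem then produces, for all $\varepsilon > 0$ sufficiently small, a locally invariant slow manifold $C_\varepsilon$ that is diffeomorphic and $\curlO(\varepsilon)$-close to $C_0$ in $C^r$, on any compact subset of $C_0$ containing $p_f$ and $p_0$.

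Second, I would use the hyperbolicity of $p_f$ and $p_0$ for the slow flow together with the implicit function theorem to obtain persistent hyperbolic equilibria $p_{f,\varepsilon}, p_{0,\varepsilon} \in C_\varepsilon$ of the same stability index. Fenichel's invariant manifold theorems for stable and unstable fibers then yield full-system manifolds $W^u_\varepsilon(p_{f,\varepsilon})$ and $W^s_\varepsilon(p_{0,\varepsilon})$ that are $\curlO(\varepsilon)$-close in $C^r$ to $N_{p_f}$ and $N_{p_0}$ on compact sets. Since transverse intersection is an open condition, the assumed transversality $N_{p_f} \pitchfork N_{p_0}$ persists for $\varepsilon$ small, and the nonempty transverse intersection $W^u_\varepsilon(p_{f,\varepsilon}) \cap W^s_\varepsilon(p_{0,\varepsilon})$ is precisely the sought heteroclinic orbit of the full system \eqref{eq:fastSlowSys1}--\eqref{eq:fastSlowSys3}.

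The main technical obstacle is the local nature of Fenichel's theorem: $C_\varepsilon$ is only guaranteed on a compact subset of $C_0$, so I would need to confine the singular heteroclinic (the concatenation of a fast jump onto $C_0$ with the slow trajectory from $p_f$ to $p_0$) to a compact set and invoke the Exchange Lemma to track $W^u_\varepsilon(p_{f,\varepsilon})$ through the transition between the fast and slow regimes. In the present setting this is manageable because the slow flow on $C_0$ is one-dimensional and the heteroclinic trajectory is monotone in $r_A$, providing an \textit{a priori} compact region in which to work.
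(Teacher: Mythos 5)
The paper does not actually prove this lemma; it explicitly cites \cite[Theorem 6.1.1]{kuehn15} and merely restates the result adapted to the present notation. Thus there is no internal proof to compare against, and you are supplying a proof sketch for a black-box citation. That said, your outline is a faithful account of the standard geometric-singular-perturbation argument underlying Kuehn's Theorem 6.1.1: normal hyperbolicity of $C_0$ (the transverse fast eigenvalue $-c_0 \neq 0$), Fenichel's first theorem giving a locally invariant slow manifold $C_\varepsilon$, persistence of the hyperbolic equilibria and their (un)stable fiber bundles as $C^1$-$\curlO(\varepsilon)$-perturbations of $N_{p_f}$ and $N_{p_0}$, and openness of transversality. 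One small remark: in the specific setting of this paper both $p_f$ and $p_0$ lie on the attracting critical manifold $C_0$, so the singular heteroclinic is entirely a slow orbit with no genuine fast-jump segment, and an appeal to the Exchange Lemma is not strictly required to confine the trajectory to a compact region (one can simply work on a compact neighborhood in $C_0$ of the slow heteroclinic). Your sketch remains correct as a proof of the general Theorem 6.1.1, of which the present application is a somewhat degenerate instance.
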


Similar to the Scenario \ref{scenario3} discussed in Section \ref{sec:S3}, we restrict to $-c_v > 3c_u$ in the subsequent analysis since the case $-c_v < 3c_u$ can be dealt with using symmetry arguments (see Remark \ref{rem:symmetryS3}).
Additionally, we assume that $\gamma_1$ is close to zero and that
\begin{align}
	\gamma_2^0 > -c_0 \left(1 + \dfrac{1}{3(9+4c_u^2)}\right).
	\label{eq:conditionGamma2S4}
\end{align}
The resulting phase plane for the slow and fast subsystems is qualitatively given in Figure \ref{fig:phasePlane}.
Linearizing the slow subsystem \eqref{eq:slowSubsys1}--\eqref{eq:slowSubsys2} on the critical manifold about $p_0$ and $p_f$, respectively leads to
\begin{align*}
	L_{p_0} &= \dfrac{\alpha_0}{3c_u - c}, \\
	L_{p_f} &= \dfrac{2\alpha_0}{3c_u - c} - \dfrac{6 (A^\ast)^2\gamma_2^0}{c_0(3c_u - c)},
\end{align*}
where we used the approximate representation of $A^\ast$ given in Lemma \ref{lem:travelingWaves}.
In particular, since $-c_v > 3c_u$ it holds that $L_{p_0} < 0$ and $L_{p_f} > 0$, where we used \eqref{eq:conditionGamma2S4} for the latter.
Therefore, we find that $p_0$ is a stable fixed point and that $p_f$ is an unstable fixed point with respect to the dynamics of the slow subsystem \eqref{eq:slowSubsys1}--\eqref{eq:slowSubsys2}.
Therefore, the (one-dimensional) slow subsystem has a heteroclinic orbit connecting $p_f$ to $p_0$.
Furthermore, the set $\{(r_A,B_1) \in C_0 \,:\, r_A \in (0,A^\ast)\}$ is contained in both $W^u(p_f)$ and $W^s(p_0)$.
Finally, since the critical manifold $C_0$ is globally attractive with respect to the fast flow it holds that $N_{p_f} = W^u(p_f)$ and that $(0,A^\ast) \times \R \subset N_{p_0}$.
Hence, $N_{p_f}$ and $N_{p_0}$ intersect transversally and Lemma \ref{lem:kuehnThm} is applicable.
Then, using that the case $c_0 < 0$ can be dealt with time-reversal (see Remark \ref{rem:c0vNegative}) and reverting the center manifold reduction Lemma \ref{lem:centerManifold} we obtain the following result.

\begin{theorem}\label{thm:S4}
	Let $c = -c_v + \varepsilon c_0$ with $c_0 \neq 0$.
	Furthermore, assume that $-c_v > 3c_u$, $c_u \neq -c_v$ and $\gamma_2 = \varepsilon \gamma_2^0$ with $\gamma_2^0$ satisfying \eqref{eq:conditionGamma2S4}.
	Then, there exist $\varepsilon_0 > 0$ and $\gamma_1^\ast > 0$ such that for all $\varepsilon \in (0,\varepsilon_0)$ and $\gamma_1 \in (-\gamma_1^\ast,\gamma_1^\ast)$ the system \eqref{eq:SHDis}--\eqref{eq:ConDis} has a family of modulating traveling front solutions $(u_\text{front},v_\text{front})(\xi,p)$ satisfying \eqref{eq:modfrontBC} and
	\begin{align*}
		u_\text{front}(\xi,p) &= \varepsilon 2 \Re(\Ahet(\varepsilon^2\xi)) \cos(p) + \curlO(\varepsilon^2), \\
		v_\text{front}(\xi,p) &= \varepsilon^2 \left( \Bhet(\varepsilon^2\xi) + 2\Re\left(\dfrac{-2\gamma_1 + i \gamma_2}{2 - i (c_u+c_v)} \Ahet(\varepsilon^2\xi)^2\right)\cos(2p)\right) + \curlO(\varepsilon^3),
	\end{align*}
	where $(\Ahet,\Bhet)$ is a heteroclinic orbit of \eqref{eq:redEqS4_1}--\eqref{eq:redEqS4_2} with
	\begin{align*}
		\lim_{\tilde{\xi} \rightarrow -\infty} (\Ahet,\Bhet)(\tilde{\xi}) = (A^\ast,-2\gamma_2^0(c_0)^{-1} (A^\ast)^2) \text{ and } \lim_{\tilde{\xi} \rightarrow +\infty} (\Ahet,\Bhet)(\tilde{\xi}) = (0,0).
	\end{align*}
	Here $\xi = x-ct$ and $p = x-\cphase t$ with $\cphase = c_u + \varepsilon^2 \omega_0^\ast$ given in Lemma \ref{lem:travelingWaves}.
\end{theorem}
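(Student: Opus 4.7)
The plan is to follow the fast-slow reduction already set up in Section \ref{sec:S4} and establish the hypotheses of Lemma \ref{lem:kuehnThm}, then invert the center manifold reduction. First, I would justify the reduction of \eqref{eq:SHDis}--\eqref{eq:ConDis} to the reduced system \eqref{eq:redEqS4_1}--\eqref{eq:redEqS4_2} by applying Lemma \ref{lem:centerManifold}, computing the relevant projections onto $\phi_c^\text{SH}$ and onto the conservation law modes, inserting the leading order expansions of $h_u,h_v$, and verifying that the choice $\gamma_2 = \varepsilon\gamma_2^0$ precisely cancels the $\varepsilon^{-1}$ singularity that would otherwise arise from the $\gamma_2$-nonlinearity in \eqref{eq:nonlinCon}.

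Next, I would pass to polar coordinates $A = r_A e^{i\phi_A}$ to decouple the phase equation \eqref{eq:fastSlowSys2} from \eqref{eq:fastSlowSys1} and \eqref{eq:fastSlowSys3}, reducing the problem to constructing a heteroclinic in the $(r_A,B_1)$-system with asymptotic states $p_f = (A^\ast,-2\gamma_2^0 c_0^{-1}(A^\ast)^2)$ and $p_0 = (0,0)$. Assuming $c_0 > 0$ and \eqref{eq:conditionGamma2S4}, the verification of the hypotheses of Lemma \ref{lem:kuehnThm} then proceeds in three substeps: (i) compute the linearizations $L_{p_0}$ and $L_{p_f}$ of the slow subsystem on $C_0$ and check via the sign conditions $-c_v > 3c_u$ and \eqref{eq:conditionGamma2S4} that $p_0$ is attracting and $p_f$ is repelling, in particular hyperbolic; (ii) exhibit the one-dimensional heteroclinic in the slow subsystem by noting that $C_0$ is one-dimensional, monotone in $r_A$ between $p_0$ and $p_f$, and contains no other equilibria; (iii) verify transversality of $N_{p_f}$ and $N_{p_0}$ using that $C_0$ is globally attracting for the linear fast subsystem \eqref{eq:fastSubsys1}--\eqref{eq:fastSubsys2} when $c_0>0$, which gives $N_{p_f} = W^u(p_f) \subset C_0$ while $N_{p_0}$ contains a full vertical strip $(0,A^\ast)\times\R$.

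With these hypotheses in place, Lemma \ref{lem:kuehnThm} yields a heteroclinic $(r_{A,\text{het}},B_{1,\text{het}})$ of \eqref{eq:fastSlowSys1}, \eqref{eq:fastSlowSys3} for all small $\varepsilon>0$; integrating \eqref{eq:fastSlowSys2} along this orbit recovers $\phi_A$ and thus the full heteroclinic $(\Ahet,\Bhet)$ of \eqref{eq:redEqS4_1}--\eqref{eq:redEqS4_2}. Persistence with respect to small $\gamma_1$ is immediate: the $\gamma_1$-contributions enter \eqref{eq:redEqS4_1}--\eqref{eq:redEqS4_2} as $\curlO(1)$ regular perturbations that preserve the sign conditions on $L_{p_0}$ and $L_{p_f}$ and the attracting character of $C_0$, so Fenichel's theorem applies uniformly. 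The case $c_0 < 0$ follows by time reversal $\tilde\xi \mapsto -\tilde\xi$, which turns $p_0$ into a repeller and $p_f$ into an attractor while reversing the orientation of the heteroclinic; then the corresponding modulating front has the same asymptotics \eqref{eq:modfrontBC} because $c_0<0$ is compensated by this reversal.

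Finally, I would invert the center manifold reduction of Lemma \ref{lem:centerManifold}: the obtained orbit on $\curlM_c$ lifts to a solution $(U,V) = (U_c,V_c) + h(U_c,V_c)$ of the spatial dynamics \eqref{eq:spatDynFinal}, and unpacking the ansatz \eqref{eq:modfrontAnsatz} together with the approximations of $h_u,h_v$ yields the claimed form of $(u_\text{front},v_\text{front})$. I expect the main obstacle to be the careful verification of step (iii), the transversal intersection of $N_{p_f}$ and $N_{p_0}$, and in particular a clean argument that the heteroclinic persists uniformly on the noncompact set where the full system is posed; the implementation requires either a cut-off to reduce to a compact normally hyperbolic invariant manifold and then an exchange lemma type argument as in \cite{kuehn15}, or the use of exponential dichotomies on the unstable fibre of $p_f$, and making this uniform in the small parameters $\gamma_1,\varepsilon$ is the technical crux of the proof.
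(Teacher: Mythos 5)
Your proposal follows essentially the same route as the paper: center manifold reduction, polar coordinates, the fast-slow decomposition with critical manifold $C_0$, verification of the hypotheses of Lemma \ref{lem:kuehnThm} via the sign conditions on $L_{p_0}$ and $L_{p_f}$ and the global attractivity of $C_0$ in the fast flow, and time reversal for $c_0<0$. The technical concerns you raise at the end about uniform persistence on a noncompact phase space and the need for an exchange-lemma or exponential-dichotomy argument are reasonable, but the paper treats this as covered by the cited result from Kuehn's book without further elaboration.
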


\begin{remark}\label{rem:c0vNegative}
	Although we have restricted the calculations to the case $c_0 > 0$, the case that $c_0 < 0$ can be handled using a time-reversed system similar to Section \ref{sec:S3}.
	Specifically, we recall that the critical manifold $C_0$ is globally repelling with respect to the fast subflow for $c_0 < 0$.
	Thus, we substitute $\tilde{\xi} \mapsto -\tilde{\xi} =: \tilde{\xi}^R$ in the slow subsystem \eqref{eq:slowSubsys1}--\eqref{eq:slowSubsys2} and $\tilde{\xi}_f \mapsto -\tilde{\xi}_f := \tilde{\xi}_f^R$ in the fast subsystem \eqref{eq:fastSubsys1}--\eqref{eq:fastSubsys2}.
	Then, we proceed as for $c_0 > 0$ and apply Lemma \ref{lem:kuehnThm} to obtain a heteroclinic connection from $p_0$ for $\tilde{\xi}^R \rightarrow -\infty$ to $p_f$ for $\tilde{\xi}^R \rightarrow +\infty$.
	Note that we have to reverse the roles of $N_{p_f}$ and $N_{p_0}$ in the process and that the condition \eqref{eq:conditionGamma2S4} on $\gamma_2^0$ is replaced by
	\begin{align*}
		\gamma_2^0 > c_0 \left(1 + \dfrac{1}{3(9+4c_u^2)}\right).
	\end{align*}
	However, we point out that this does not change the set of admissible $\gamma_2^0$ since $c_0 < 0$.
	Then, reverting back to the original time flow, we obtain Theorem \ref{thm:S4} for $c_0 < 0$.
\end{remark}

\subsection{Scenario \ref{scenario5}}\label{sec:S5}

Finally, we discuss the case that the spreading speed of the modulating front is $\varepsilon$-close to both group velocities $-c_v$ and $3c_u$, that is, we assume that $-c_v = 3c_u$ and set $c = 3c_u + \varepsilon c_0$.
Using the calculations in Section \ref{sec:S2} the four central eigenvalues of the Swift-Hohenberg part $L^\text{SH}$ are given by
\begin{align*}
	\lambda_{c,\pm}^\text{SH} = \varepsilon \delta_\pm + \curlO(\varepsilon^2), \quad \delta_\pm = -\dfrac{c_0}{8 + 6ic_u} \pm \dfrac{\Delta}{8+6ic_u}
\end{align*}
originating from $L_1^\text{SH}$ and their complex conjugates corresponding to $L_{-1}^\text{SH}$.
Here, recall that $\Delta = \sqrt{c_0^2 - 4(3c_u + 4)(\alpha_0 + i \omega_0^\ast)}$.
Additionally, using Lemma \ref{lem:specLcon}, there are two central eigenvalues from the conservation law part $L^\text{con}$ given by
\begin{align*}
	\lambda_{c,0}^\text{con} = 0 \text{ and } \lambda_{c,1}^{\text{con}} = -\varepsilon c_0.
\end{align*}
Since the derivation of the reduced system on the center manifold is very similar to the derivations in Sections \ref{sec:S2} and \ref{sec:S3}, we refrain from presenting the derivations in all details.
Recall that in Section \ref{sec:S3}, we set $\gamma_2 = 0$, since $\gamma_2 \neq 0$ leads to unbounded coefficients in the chosen scaling as $\varepsilon \rightarrow 0$.
Therefore, we again set $\gamma_2 = 0$ in the subsequent analysis.
Then, we choose the coordinates on the center manifold als
\begin{align*}
	U_c(\xi) &= \varepsilon\left(A_+(\varepsilon\xi) \phi_{c,+}^\text{SH} + A_-(\varepsilon\xi) \phi_{c,-}^\text{SH}\right) e^{ip} + c.c., \\
	V_c(\xi) &= \varepsilon^2 B(\varepsilon\xi),
\end{align*}
where $A_\pm(\tilde{\xi}) \in \C$, $B(\tilde{\xi}) = (B_1,B_2)^T(\tilde{\xi}) \in \R$ and $\tilde{\xi} = \varepsilon \xi$.
Here, as in \eqref{eq:uCoordCMS2}, $\phi_{c,\pm}^\text{SH}$ are the eigenvectors of $L_1^\text{SH}$ corresponding to the central eigenvalues $\lambda_{c,\pm}^\text{SH}$.
The dynamics on the center manifold, up to higher order terms, is then given by
\begin{align*}
	\partial_{\tilde{\xi}} A_+ &= \delta_+ A_+ - \dfrac{(A_+ + A_-) B_1}{\Delta} - \dfrac{\hat{a}_\text{cub}}{\Delta} (A_+ + A_-) \snorm{A_+ + A_-}^2, \\
	\partial_{\tilde{\xi}} A_- &= \delta_- A_- + \dfrac{(A_+ + A_-) B_1}{\Delta} + \dfrac{\hat{a}_\text{cub}}{\Delta} (A_+ + A_-) \snorm{A_+ + A_-}^2, \\
	\partial_{\tilde{\xi}} B_1 &= -c_0 B_1 - 2\varepsilon^{-3} \gamma_1 \sum_{k+j = 0} U_{k0} U_{j1},
\end{align*}
where
\begin{align*}
	\hat{a}_\text{cub} := -3 - \dfrac{1}{9+6ic_u} - \dfrac{2\gamma_1}{2-i(c_u+c_v)}.
\end{align*}
Next, we substitute as in \eqref{eq:substitutionS2},
\begin{align*}
	A := A_+ + A_- \text{ and } \tilde{A} := -\dfrac{c_0}{8+6ic_u} (A_+ + A_-) + \dfrac{\Delta}{8+6ic_u} (A_+ - A_-),
\end{align*}
and use that the second component of $U_c = (U_{c,0},U_{c,1},U_{c,2},U_{c,3})$ can be approximated by
\begin{align*}
	U_{c,1} &= \varepsilon^2 (A_+ \delta_+ + A_-\delta_-) e^{ip} + c.c. + \curlO(\varepsilon^3), \\
	&= \varepsilon^2 \left(-\dfrac{c_0}{8+6ic_u} (A_+ + A_-) + \dfrac{\Delta}{8+6ic_u} (A_+ - A_-)\right) e^{ip} + c.c. + \curlO(\varepsilon^3), \\
	&= \varepsilon^2 \tilde{A} e^{ip} + c.c. + \curlO(\varepsilon^3),
\end{align*}
which yields that $\sum_{k+j = 0} U_{k0}U_{j1} = 2\varepsilon^3 \Re(A \overline{\tilde{A}}) + \curlO(\varepsilon^4)$.
This gives the reduced system
\begin{subequations}
	\begin{align}
		\partial_{\tilde{\xi}} A &= \tilde{A}, \label{eq:redEqS5_1}\\
		\partial_{\tilde{\xi}} \tilde{A} &= -\dfrac{2c_0}{8+6ic_u} \tilde{A} + \dfrac{\Delta^2 - c_0^2}{(8+6ic_u)^2} A - \dfrac{2}{8+6ic_u} A B_1 - \dfrac{2 \hat{a}_\text{cub}}{8+6ic_u} A \snorm{A}^2, \label{eq:redEqS5_2}\\
		\partial_{\tilde{\xi}} B_1 &= -c_0 B_1 - 4\gamma_1 \Re(A\overline{\tilde{A}}). \label{eq:redEqS5_3}
	\end{align}
\end{subequations}

\subsubsection{Heteroclinic orbits, their persistence and existence of modulating traveling fronts}

Similarly to Section \ref{sec:S2}, we first set $\gamma_1 = 0$ (and $\varepsilon = 0$ by neglecting higher order terms) for which $\{(A,\tilde{A},B_1) \in \C^2 \times \R \,:\, B_1 = 0\}$ is an invariant set with respect to the dynamics of the system \eqref{eq:redEqS5_1}--\eqref{eq:redEqS5_3}.
Restricting to this set, the evolution on the center manifold is then, up to higher order terms, given by \eqref{eq:redEqS2_1}--\eqref{eq:redEqS2_2}.
This system has been studied numerically in Section \ref{sec:S2} and exhibits heteroclinic connections from $(A^\ast,0)$ for $\tilde{\xi} \rightarrow -\infty$ to $(0,0)$ for $\tilde{\xi} \rightarrow +\infty$ if $c_0 > c_0^\ast$ for some $c_0^\ast$.

It remains to show that these heteroclinics also persist in the extended system \eqref{eq:redEqS5_1}--\eqref{eq:redEqS5_3}.
Hence, similar to Section \ref{sec:S2}, we split $A$ and $\tilde{A}$ into their real and imaginary, which results in a real-valued dynamical system in $\R^5$.
We also note that using symmetry arguments, we can restrict to the case $c_0 > 0$.
Then, we make the assumptions \ref{assump1}--\ref{assump2}, that is, we assume that if $c_0 > c^\ast$, heteroclinic orbits exist for $\gamma_1 = B_1 = 0$, the origin is a stable, hyperbolic fixed point and the invading state $(A^\ast,0,0,0,0)$ is a hyperbolic fixed point with a one-dimensional unstable eigenspace.
In particular, we highlight that the addition of \eqref{eq:redEqS5_3} only adds negative (i.e.~stable) eigenvalues since $c_0 > 0$ and thus is compatible with the spectral assumption \ref{assump2}.

Using that the heteroclinic connection is a subset of the (one-dimensional) unstable manifold of $(A^\ast,0,0,0,0)$, the unstable manifold of $(A^\ast,0,0,0,0)$ and the stable manifold of the origin intersect for $\gamma_1 = 0$ (and also $\varepsilon = 0$).
Furthermore, since the origin is a stable fixed point, its stable manifold is a 5-dimensional manifold in $\R^5$ and thus, the intersection has to be transversal.
Thus, the heteroclinic connection persists under small perturbations, in particular for small $(\varepsilon,\gamma_1)$.
By using the center manifold result Lemma \ref{lem:centerManifold}, we then obtain the existence of modulating traveling fronts.

\begin{theorem}\label{thm:S5}
	Let $c = 3c_u + \varepsilon c_0$, $c_v = -3c_u \neq -c_u$ and $\gamma_2 = 0$.
	Additionally, let $c_0^\ast > 0$ such that the assumptions \ref{assump1}--\ref{assump2} hold and set $c_0 > c_0^\ast$.
	Then, there exist $\varepsilon_0 > 0$ and $\gamma_1^\ast > 0$ such that for all $\varepsilon \in (0,\varepsilon_0)$ and $\gamma_1 \in (-\gamma_1^\ast, \gamma_1^\ast)$ the system \eqref{eq:SHDis}--\eqref{eq:ConDis} has a family of modulating traveling front solutions $(u_\text{front},v_\text{front})(\xi,p)$ satisfying \eqref{eq:modfrontBC} and
	\begin{align*}
		u_\text{front}(\xi,p) &= \varepsilon 2 \Re(\Ahet(\varepsilon^2\xi)) \cos(p) + \curlO(\varepsilon^2), \\
		v_\text{front}(\xi,p) &= \varepsilon^2 \left( \Bhet(\varepsilon^2\xi) + 2\Re\left(\dfrac{-2\gamma_1 + i \gamma_2}{2 - i (c_u+c_v)} \Ahet(\varepsilon^2\xi)^2\right)\cos(2p)\right) + \curlO(\varepsilon^3),
	\end{align*}
	where $(\Ahet, \tilde{A}_\text{het},\Bhet)$ is a heteroclinic orbit of \eqref{eq:redEqS5_1}--\eqref{eq:redEqS5_3} with
	\begin{align*}
		\lim_{\tilde{\xi} \rightarrow -\infty} (\Ahet, \tilde{A}_\text{het},\Bhet)(\tilde{\xi}) = (A^\ast,0,0) \text{ and } \lim_{\tilde{\xi} \rightarrow +\infty} (\Ahet, \tilde{A}_\text{het},\Bhet)(\tilde{\xi}) = (0,0,0).
	\end{align*}
	Here $\xi = x-ct$ and $p = x-\cphase t$ with $\cphase = c_u + \varepsilon^2 \omega_0^\ast$ given in Lemma \ref{lem:travelingWaves}.

\end{theorem}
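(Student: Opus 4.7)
My plan is to carry out the spatial dynamics and center manifold program already outlined before the theorem. First, I would justify the derivation of the reduced system \eqref{eq:redEqS5_1}--\eqref{eq:redEqS5_3} rigorously: apply Lemma \ref{lem:centerManifold} to the spatial system \eqref{eq:spatDynFinal} with the coordinates $U_c = \varepsilon(A_+\phi_{c,+}^\text{SH} + A_-\phi_{c,-}^\text{SH})e^{ip} + c.c.$ and $V_c = \varepsilon^2 B$, approximate the graph map $h = (h_u,h_v)$ to leading order exactly as in Sections \ref{sec:S2} and \ref{sec:S3}, and then project the reduced vector field onto the eigenspaces of $\phi_{c,\pm}^\text{SH}$ and onto the $B$-direction. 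The linear substitution $A = A_+ + A_-$, $\tilde A = -\tfrac{c_0}{8+6ic_u}A + \tfrac{\Delta}{8+6ic_u}(A_+ - A_-)$ then produces \eqref{eq:redEqS5_1}--\eqref{eq:redEqS5_3} up to terms of order $\varepsilon$.

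Second, I would reduce the persistence problem to the situation already analyzed in Section \ref{sec:S2}. Setting $\gamma_1 = 0$ (and neglecting higher order terms in $\varepsilon$) kills the coupling in \eqref{eq:redEqS5_3}, making the affine subspace $\{B_1 = 0\}$ forward- and backward-invariant; on this subspace the system collapses to \eqref{eq:redEqS2_1}--\eqref{eq:redEqS2_2}, and by assumptions \ref{assump1}--\ref{assump2} there is a heteroclinic orbit from $(A^\ast,0,0)$ to the origin as long as $c_0 > c_0^\ast$. As in Section \ref{sec:S2} I would write $A = A_r + iA_i$, $\tilde A = \tilde A_r + i\tilde A_i$ and treat the system as a real five-dimensional ODE. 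The key observation is that at $\gamma_1 = \varepsilon = 0$ the linearization of \eqref{eq:redEqS5_3} about both $(A^\ast,0,0,0,0)$ and $(0,0,0,0,0)$ only contributes the additional eigenvalue $-c_0 < 0$ (since $c_0 > 0$), so the spectral assumptions carry over unchanged: the invading state retains a one-dimensional unstable manifold and the origin becomes a stable hyperbolic fixed point with a full five-dimensional stable manifold in $\mathbb{R}^5$.

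Third, the transversality step: since the origin is a hyperbolic sink, its local stable manifold contains an open neighborhood of $0$ in $\mathbb{R}^5$, and the unstable manifold of the invading state is one-dimensional. The heteroclinic orbit obtained at $\gamma_1 = 0$ is contained in the intersection of these two manifolds, and the dimension count $1 + 5 = 5 + 1$ forces this intersection to be transversal. Thus the heteroclinic connection persists under $C^1$-small perturbations of the vector field, in particular for $(\varepsilon,\gamma_1)$ sufficiently close to zero. The case $c_0 < -c_0^\ast$ is handled through the reversibility argument already used in Remark \ref{rem:reversedModFrontS2}, producing reversed fronts satisfying \eqref{eq:revModfrontBC}.

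Finally, I would lift the persistent heteroclinic back to the full PDE: reverting the center manifold reduction in Lemma \ref{lem:centerManifold} and the spatial dynamics ansatz \eqref{eq:modfrontAnsatz}, substituting the approximations \eqref{eq:huS1}--\eqref{eq:hvS1} (adapted to the present coordinates) for the shape of $(u_\text{front},v_\text{front})$, and reading off the stated form. The main difficulty in this program is not the persistence argument itself, which is essentially a transversality count once the spectra are known, but rather the verification that assumptions \ref{assump1}--\ref{assump2} genuinely transfer without deterioration when the $B_1$-equation is adjoined: this requires that the added eigenvalue $-c_0$ does not collide with the existing central spectrum and that the stable eigenspace of the origin in $\mathbb{R}^5$ really does fill out a neighborhood, both of which hinge crucially on the sign of $c_0$.
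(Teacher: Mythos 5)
Your proposal follows the same route as the paper's argument: derive the reduced system, exploit the invariant subspace $\{B_1 = 0\}$ at $\gamma_1 = 0$ to reduce to Scenario~II under assumptions \ref{assump1}--\ref{assump2}, observe that \eqref{eq:redEqS5_3} only adds the stable eigenvalue $-c_0 < 0$, and then use that the origin is a hyperbolic sink (so its stable manifold is full-dimensional and the intersection with the one-dimensional unstable manifold of the invading state is automatically transversal) to obtain persistence before reverting the center manifold reduction. The only cosmetic difference is your concern about eigenvalue ``collision,'' which is immaterial since hyperbolicity is determined solely by avoidance of the imaginary axis; otherwise the structure and content coincide with the paper's proof.
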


\section*{Acknowlegements}
I would like to thank Bj\"orn de Rijk and Guido Schneider for their feedback and encouragement during the realization of this paper.
Funded by the Deutsche Forschungsgemeinschaft (DFG, German Research Foundation) under Germany's Excellence Strategy -- EXC 2075 -- 390740016. 
I acknowledge the support by the Stuttgart Center for Simulation Science (SimTech).

\bibliography{BibDeskLibraryModfrontsDispersion.bib}

\newcommand{\etalchar}[1]{$^{#1}$}
\begin{thebibliography}{BJBD{\etalchar{+}}85}

\bibitem[BHK09]{burkeHoughtonKnobloch09}
J.~Burke, S.~M. Houghton, and E.~Knobloch.
\newblock {S}wift-{H}ohenberg equation with broken reflection symmetry.
\newblock {\em Phys. Rev. E}, 80(3):036202, 2009.

\bibitem[BJBD{\etalchar{+}}85]{benJacobBrandDeeKramerLanger85}
E.~Ben-Jacob, H.~Brand, G.~Dee, L.~Kramer, and J.~S. Langer.
\newblock Pattern propagation in nonlinear dissipative systems.
\newblock {\em Phys. D}, 14(3):348--364, 1985.

\bibitem[CE86]{colletEckmann86}
P.~Collet and J.-P. Eckmann.
\newblock The existence of dendritic fronts.
\newblock {\em Comm. Math. Phys.}, 107(1):39--92, 1986.

\bibitem[CH93]{crossHohenberg93}
M.~C. Cross and P.~C. Hohenberg.
\newblock Pattern formation outside of equilibrium.
\newblock {\em Rev. Modern Phys.}, 65(3):851--1112, 1993.

\bibitem[DCD{\etalchar{+}}07]{auto07p}
E.~J. Doedel, A.~R. Champneys, F.~Dercole, T.~F. Fairgrieve, Y.~A. Kuznetsov,
  B.~Oldeman, R.~C. Paffenroth, B.~Sandstede, X.~J. Wang, and C.~H. Zhang.
\newblock Auto-07p: Continuation and bifurcation software for ordinary
  differential equations, 2007.

\bibitem[DL83]{deeLanger83}
G.~Dee and J.~S. Langer.
\newblock Propagating {P}attern {S}election.
\newblock {\em Phys. Rev. Lett.}, 50(6):383--386, 1983.

\bibitem[DSSS09]{doelmanSandstedeScheelSchneider09}
A.~Doelman, B.~Sandstede, A.~Scheel, and G.~Schneider.
\newblock The dynamics of modulated wave trains.
\newblock {\em Mem. Am. Math. Soc.}, 934:105, 2009.

\bibitem[ESvS04]{ebertSpruijtVanSaarloos04}
U.~Ebert, W.~Spruijt, and W.~van Saarloos.
\newblock Pattern forming pulled fronts: bounds and universal convergence.
\newblock {\em Phys. D}, 199(1):13--32, 2004.

\bibitem[EW91]{eckmannWayne91}
J.-P. Eckmann and C.~E. Wayne.
\newblock Propagating fronts and the center manifold theorem.
\newblock {\em Comm. Math. Phys.}, 136(1):285--307, 1991.

\bibitem[FH15]{fayeHolzer15}
G.~Faye and M.~Holzer.
\newblock Modulated traveling fronts for a nonlocal {F}isher-{KPP} equation: A
  dynamical systems approach.
\newblock {\em J. Differential Equations}, 258(7):2257--2289, 2015.

\bibitem[GdR20]{gohDeRijk20arXiv}
R.~Goh and B.~de~Rijk.
\newblock Spectral stability of pattern-forming fronts in the complex
  {G}inzburg-{L}andau equation with a quenching mechanism.
\newblock {\em arxiv:2006.15083}, 2020.

\bibitem[HBC{\etalchar{+}}19]{harizEtAl19}
A.~Hariz, L.~Bahloul, L.~Cherbi, K.~Panajotov, M.~Clerc, M.~A. Ferr\'e,
  B.~Kostet, E.~Averlant, and M.~Tlidi.
\newblock Swift-hohenberg equation with third-order dispersion for optical
  fiber resonators.
\newblock {\em Phys. Rev. A}, 100(8):023816, 2019.

\bibitem[HCS99]{haragusSchneider99}
M.~H{\u{a}}r{\u{a}}gu{\c{s}}-Courcelle and G.~Schneider.
\newblock Bifurcating fronts for the {Taylor-Couette} problem in infinite
  cylinders.
\newblock {\em Z. Angew. Math. Phys.}, 50(1):120--151, 1999.

\bibitem[HI11]{haragusIooss11}
M.~Haragus and G.~Iooss.
\newblock {\em Local Bifurcations, Center Manifolds, and Normal Forms in
  Infinite-Dimensional Dynamical Systems}.
\newblock Springer-Verlag London, 2011.

\bibitem[Hil20]{hilder20}
B.~Hilder.
\newblock Modulating traveling fronts for the {S}wift-{H}ohenberg equation in
  case of an additional conservation law.
\newblock {\em J. Differential Equations}, 269(5):4353--4380, 2020.

\bibitem[HSZ11]{haeckerSchneiderZimmermann11}
T.~H{\"a}cker, G.~Schneider, and D.~Zimmermann.
\newblock Justification of the {G}inzburg--{L}andau approximation in case of
  marginally stable long waves.
\newblock {\em J. Nonlinear Sci.}, 21(1):93--113, 2011.

\bibitem[Kue15]{kuehn15}
C.~Kuehn.
\newblock {\em Multiple Time Scale Dynamics}.
\newblock Springer International Publishing, 2015.

\bibitem[MC00]{matthewsCox00}
P.~C. Matthews and S.~M. Cox.
\newblock Pattern formation with a conservation law.
\newblock {\em Nonlinearity}, 13(4):1293--1320, 2000.

\bibitem[SS04]{sandstedeScheel04}
B.~Sandstede and A.~Scheel.
\newblock Defects in {O}scillatory {M}edia: {T}oward a {C}lassification.
\newblock {\em SIAM J. Appl. Dyn. Syst.}, 3(1):1--68, 2004.

\bibitem[SSSU12]{sandstedeScheelSchneiderUecker12}
B.~Sandstede, A.~Scheel, G.~Schneider, and H.~Uecker.
\newblock Diffusive mixing of periodic wave trains in reaction-diffusion
  systems.
\newblock {\em J. Differential Equations}, 252(5):3541--3574, 2012.

\bibitem[SU17]{schneiderUecker17}
G.~Schneider and H.~Uecker.
\newblock {\em Nonlinear {PDE}s: A Dynamical Systems Approach}, volume 182 of
  {\em Graduate Studies in Mathematics}.
\newblock American Mathematical Soc., 2017.

\bibitem[VI92]{vanderbauwhedeIooss92}
A.~Vanderbauwhede and G.~Iooss.
\newblock {\em Center manifold theory in infinity dimensions}, pages 125--163.
\newblock Springer-Verlag Berlin Heidelberg, 1992.

\bibitem[vS03]{vanSaarloos03}
W.~van Saarloos.
\newblock Front propagation into unstable states.
\newblock {\em Phys. Rep.}, 386(2--6):29--222, 2003.

\bibitem[Zim14]{zimmermann14}
D.~Zimmermann.
\newblock {\em Justification of an Approximation Equation for the
  B{\'e}nard-Marangoni Problem}.
\newblock PhD thesis, Universit{\"a}t Stuttgart, 2014.

\end{thebibliography}
\bibliographystyle{alphainitials}
\addcontentsline{toc}{chapter}{Bibliography}

\end{document}